\newtheorem{theorem}{Theorem}[section]
\theoremstyle{definition}
\newtheorem{definition}[theorem]{Definition}
\newtheorem{example}[theorem]{Example}
\newtheorem{corollary}[theorem]{Corollary}
\theoremstyle{remark}
\newtheorem{remark}[theorem]{Remark}
\numberwithin{equation}{section}
\begin{document}
\title{\sc Census of bounded curvature paths}
\author{Jean D\'iaz}
\author{Jos\'{e} Ayala}
\address{FIA, Universidad Arturo Prat, Iquique, Chile}
%\email{jayalhoff@gmail.com}
\email{jayalhoff@gmail.com - jeanpdp@gmail.com}
\subjclass[2000]{}
\date{}
\dedicatory{}
\keywords{}
\maketitle

\begin{abstract} A bounded curvature path is a continuously differentiable piece-wise C2 path with bounded absolute curvature connecting two points in the tangent bundle of a surface. These paths have been widely considered in computer science and engineering since the bound on curvature models the trajectory of the motion of robots under turning circle constraints. Analyzing global properties of spaces of bounded curvature paths is not a simple matter since the length variation between length minimizers of arbitrary close endpoints or directions is in many cases discontinuous. In this note, we develop a simple technology allowing us to partition the space of spaces of bounded curvature paths into one-parameter families. These families of spaces are classified in terms of the type of connected components their elements have (homotopy classes, isotopy classes, or isolated points) as we vary a parameter defined in the reals. Consequently, we answer a question raised by Dubins (Pac J Math 11(2):471–481, 1961).
\end{abstract}

%----------------------------------------------------------------------------------------
%	PRELUDE
%----------------------------------------------------------------------------------------
\section{Prelude}

It is well known that any two plane curves both closed or with different endpoints are homotopic.  Graustein, and Whitney in 1937, independently proved that not any two planar closed curves are regularly homotopic (homotopic through immersions) \cite{whitney}. Markov in 1857 considered several optimization problems relating a bound on curvature with the design of railroads \cite{markov}. But, it was only in 1957 that bounded curvature paths were rigorously introduced by Dubins when bounded curvature paths of minimal length were first characterized \cite{dubins 1}. 

Fix two elements in the tangent bundle of the Euclidean plane $(x,X),(y,Y)\in T{\mathbb R}^2$. Informally, a planar bounded curvature path is a $C^1$ and piecewise $C^2$ path starting at $x$, finishing at $y$; with tangent vectors at these points $X$ and $Y$ respectively, having absolute curvature bounded by $\kappa=\frac{1}{r}>0$. Here $r$ is the minimum allowed radius of curvature. The piecewise $C^2$ property comes naturally due to the nature of the length minimizers \cite{dubins 1}\footnote{Dubins proved that bounded curvature paths of minimal length are concatenations of two arcs of a circle with a line segment in between, or three arcs of a circle, or any subset of these. The so-called {\sc csc}-{\sc ccc} paths.}.

In 1961 Dubins raised fundamental questions about the topology of the spaces of bounded curvature paths \cite{dubins 2}. ``Here we only begin the exploration, raise some questions that we hope will prove stimulating, and invite others to discover the proofs of the definite theorems, proofs that have eluded us'' see pp. 471 in \cite{dubins 2}. Fifty years later the fundamental questions proposed by Dubins were answered through the papers \cite{paperb, papera, paperc, paperd}.  In addition, the classification of the homotopy classes of curves with bounded absolute curvature, having fixed initial and final positions, and variable initial and final directions was achieved in \cite{papere}. 

In this note, we develop an elementary framework enabling us to parametrize families of spaces of bounded curvature paths. These families share similar types of connected components, being these: isolated points, homotopy, or isotopy classes, see Theorem \ref{maincensus1} and Definition \ref{def:spaces}. In particular, we answer a question raised by Dubins in 1961 \cite{dubins 2} by explicitly describing the set of endpoints $(x,X),(y,Y)\in T{\mathbb R}^2$ so that the space of bounded curvature paths starting at $x$, finishing at $y$; with tangent vectors at these points $X$ and $Y$ respectively admits a bounded isotopy class, see Theorem \ref{noopnoclofib} and Corollary \ref{noopnoclo}. We conclude by presenting an updated (parametric) version of the classification theorem for homotopy classes of bounded curvature paths in \cite{paperd}, by incorporating the results here obtained, see Theorem \ref{paramclass}. Our results can be extended without much effort for paths in the hyperbolic 2-space. 
 
 This article is the culmination of a program devoted to classify the homotopy classes of bounded curvature paths \cite{paperc, paperd}, and the minimal length elements in homotopy classes \cite{paperb, papera}. We recommend the reader from time to time refer to our previous work \cite{paperb, papere, papera, paperc, paperd}.  We conclude by presenting an Appendix that can be read independently. This Appendix considers further examples and questions about computational aspects of connected components, and deformations of piecewise constant bounded curvature paths.

 There is a vast literature on bounded curvature paths from the theoretical computer science point of view. We encourage the reader to refer to \cite{aga1, baker, buasanei2, bui, fortune, jacobs, lavalle, reif, rus}.  Bounded curvature paths have been applied to many real-life problems since a bound on curvature models the trajectory of the motion of wheeled vehicles, and drones also called unmanned aerial vehicles (UAV). We mention only \cite{brazil 1, chang, duindan, ny, owen1, soures, tso1}. Literature on the topology and geometry of spaces of bounded curvature paths can be found in \cite{{paperb}, {papere}, {papera}, {paperc}, {paperd}, {dubins 1}, {dubins 2}, {reeds}, {saldanha}, {sus}}. 

The illustrations here presented have been imported from Dubins Explorer, a software for bounded curvature paths \cite{dubinsexplorer}. 

%----------------------------------------------------------------------------------------
%	ON SPACES OF BOUNDED CURVATURE PATHS
%----------------------------------------------------------------------------------------
\section{On spaces of bounded curvature paths}

For the convenience of the reader, we include relevant material from our previous work in \cite{paperb, papere, papera, paperc, paperd}. Denote by $T{\mathbb R}^2$ the tangent bundle of ${\mathbb R}^2$. Recall that the elements in $T{\mathbb R}^2$ are pairs $(x,X)$ denoted here for short by {\sc x}. The first coordinate of such a pair corresponds to a point in ${\mathbb R}^2$ and the second to a tangent vector to ${\mathbb R}^2$ at $x$.

\begin{definition} \label{defbcp} Given $(x,X),(y,Y) \in T{\mathbb R}^2$,  a path $\gamma: [0,s]\rightarrow {\mathbb R}^2$ connecting these points is a {\it bounded curvature path} if:
\end{definition}
 \begin{itemize}
\item $\gamma$ is $C^1$ and piecewise $C^2$;
\item $\gamma$ is parametrized by arc length (i.e $||\gamma'(t)||=1$ for all $t\in [0,s]$);
\item $\gamma(0)=x$,  $\gamma'(0)=X$;  $\gamma(s)=y$,  $\gamma'(s)=Y$;
\item $||\gamma''(t)||\leq \kappa$, for all $t\in [0,s]$ when defined, $\kappa>0$ a constant.
\end{itemize}

The first item means that a bounded curvature path has continuous first derivative and piecewise continuous second derivative. Minimal length elements in spaces of paths satisfying the last three items in Definition \ref{defbcp} are in fact $C^1$ and piecewise $C^2$. For the third item, without loss of generality, we extend the domain of $\gamma$ to $(-\epsilon,s+\epsilon)$ for $\epsilon>0$. Sometimes we describe the third item as the endpoint condition. The fourth item means that bounded curvature paths have absolute curvature bounded above by a positive constant. Without loss of generality, we consider $\kappa=1$. 

The unit tangent bundle $UT\mathbb R^2$ is equipped with a natural projection $p : UT{\mathbb R}^2 \rightarrow {\mathbb R}^2$. Note that $p^{-1}(y)$ is $\mathbb S^1$ for all $y \in{\mathbb R}^2$. The space of endpoints is a circle bundle over ${\mathbb R}^2$.

\begin{remark}\label{coo}{\it (Coordinate system and angle orientation).} 
\begin{itemize}
\item For the given $(x,X), (y,Y) \in T{\mathbb R}^2$ in Definition \ref{defbcp} we consider a coordinate system so that the origin is identified with $x$, and $X$ with the first canonical vector in the standard basis $\{X=e_1,e_2 \}$ for $\mathbb R^2$. 
\item When measuring angles we consider the positive orientation to be traveled counterclockwise. 
\end{itemize}
\end{remark}

Dubins \cite{dubins 1} proved that the length minimizer bounded curvature paths are necessarily a concatenation of an arc of a unit radius circle, followed by a line segment, followed by an arc of a unit radius circle, the so-called {\sc csc} paths. Or, a concatenation of three arcs of unit radius circles, the so-called {\sc ccc} paths. After considering {\sc r} be a circle traveled to the right and {\sc l} a circle travelled to the left we obtain six possible types of paths, namely {\sc lsl}, {\sc rsr}, {\sc lsr}, {\sc rsl}, {\sc lrl} and {\sc rlr}. Paths having one of these types are here called Dubins paths. Note that we are considering Dubins path to be local not necessarily global minimum of length.  

In the following paragraphs, we illustrate through examples the richness of the theory of bounded curvature paths. Its features come from the constraints these curves satisfy. These constraints lead to interesting interactions between metric geometry and computational mathematics. 

\begin{example} \label{ex:1}\hfill 
\begin{enumerate}
\item Recall that length minimizers are considered for establishing distance between points in a manifold. This approach is not suitable when considering bounded curvature paths, since in many cases, the length variation between length minimizers of arbitrarily close endpoints or directions is discontinuous. 

Consider $(x,X),(y,Y_\theta) \in T \mathbb R^2$, $\theta \in \mathbb R$, with $\kappa=1$:
\begin{itemize}
\item $x=(0,0)$; $X=e^{2\pi i}\in T_x\mathbb R^2$.
\item $y=(1,1)$; $\mbox{\it Y}_{\theta}=e^{\theta i}\in T_y\mathbb R^2$.
\end{itemize}

\noindent Discontinuities for the length of the length minimizers happen when perturbing around $\mbox{\it Y}_{\frac{\pi}{2}}$, see Fig.~\ref{figdiscde}. The sudden jumps in length suggest the existence of isolated points, see Theorem 3.9 in \cite{paperc}. In fact, the path in Fig. \ref{figdiscde} left is an isolated point in the space of bounded curvature paths from $(x,X)$ to $(y,Y_{\frac{\pi}{2}})$. 

The path in Fig.~\ref{figdiscde} right illustrates a discontinuity after perturbing the final location to $y'=(1-\epsilon,1-\epsilon)$ for $\epsilon>0$ small. Note that length minimisers may not be embedded paths.

%----------------------------------------------------------------------------------------
%	FIGURE 1
%----------------------------------------------------------------------------------------
\begin{figure}[h]
	\centering
	\includegraphics[width=1\textwidth,angle=0]{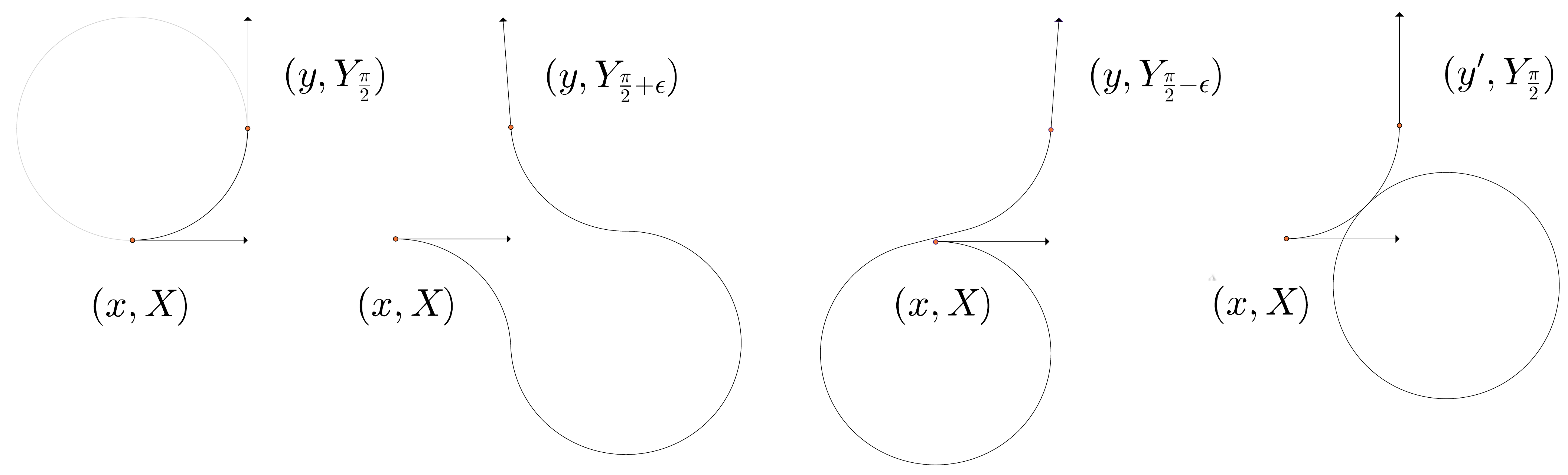}
	\caption{Examples of length minimizers in their respective path space. The three paths at the right are the result of small perturbations to the final position or direction of $(y,Y_{\frac{\pi}{2}})$, for $\epsilon>0$. After applying Dubins' characterization for the length minimizers \cite{dubins 1} a simple numerical experiment shows the existence of length discontinuities.}
	\label{figdiscde}
\end{figure}

\item Spaces of bounded curvature paths have several local minima of length, see Fig. \ref{sixdubb1}. 

Consider $\mbox{\sc x}=(x,X),\mbox{\sc y}=(y,Y)\in T\mathbb R^2$ with $\kappa=1$:
\begin{itemize}
\item  $x=(0,0)$; $X=e^{2\pi i}\in T_x\mathbb R^2$.
\item $y=(-2,1)$; $Y=e^{-\frac{\pi}{4} i}\in T_y\mathbb R^2$.
\end{itemize}

 By recursively applying the methods in \cite{papera} for obtaining the {\sc csc}-{\sc ccc} charactarization for the length minimizers \cite{paperb, papera, buasanei1, dubins 1, johnson} we obtain all the local minima of length. 
 
 By Proposition 4.4 in \cite{paperd} the paths $\gamma_0$ and $\gamma_5$ are homotopic without violating the curvature bound throughout the deformation. The same applies for $\gamma_1$ and $\gamma_4$. By Proposition 4.3 in \cite{paperd} the paths $\gamma_2$ and $\gamma_3$ lie in the same homotopy class of bounded curvature paths. 

%----------------------------------------------------------------------------------------
%	FIGURE 2
%----------------------------------------------------------------------------------------
\begin{figure}[h]
	\centering
	\includegraphics[width=.9\textwidth,angle=0]{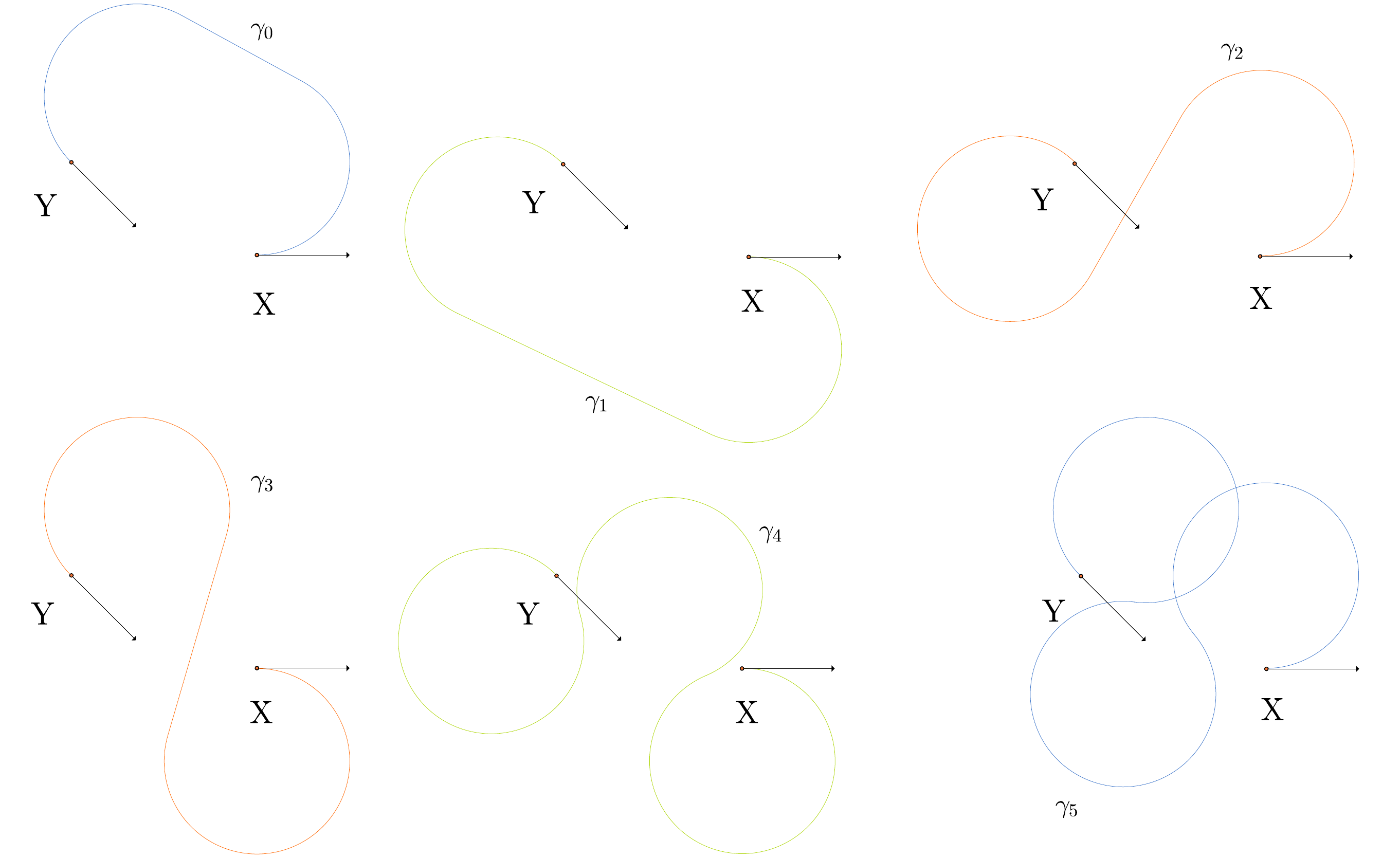}
	\caption{Spaces of bounded curvature paths have several local minima of length. Paths with matching colors are homotopic without violating the curvature bound throughout the deformation.}
	\label{sixdubb1}
\end{figure}

\item Another interesting feature is that the symmetry property metrics satisfy is in general violated. For example, the length minimizer from $(x,X)$ to $(y,\mbox{\it Y}_{\frac{\pi}{2}})$ has lenght $\frac{\pi }{4}$, see Fig.~\ref{figdiscde} left. On the other hand, the length minimizer from $(y,\mbox{\it Y}_{\frac{\pi}{2}})$ to $(x,X)$ has length $\frac{3\pi }{4}$, see Theorem 4.6 in \cite{paperb}. 

\item The classification of the homotopy classes of bounded curvature paths was obtained in \cite{paperd}. A crucial step was to prove that for certain $(x,X),(y,Y)\in T{\mathbb R}^2$ there exists a bounded region $\Omega\subset \mathbb R^2$ that ``traps'' embedded bounded curvature paths. That is, no embedded bounded curvature path whose image is in $\Omega$ can be deformed (while preserving the curvature bound throughout the deformation) to a path having a point not in $\Omega$, see Definition 4.1 in \cite{paperc}. In \cite{paperd} we proved that these ``trapped regions'' are the domain of elements in isotopy classes of bounded curvature paths, we refer to these as {\bf bounded isotopy classes}. Discontinuities may also occur in the formation of trapped regions. These ideas will be discussed in subsection \ref{construct}. 

Consider $(x,X),(y,Y)\in T\mathbb R^2$ with $\kappa=1$. For $\epsilon>0$ small, the spaces of bounded curvature paths satisfying:

\begin{itemize}
\item  $x=(0,0)$; $X=e^{2\pi i}\in T_x\mathbb R^2$
\item $y=(1+\epsilon,1+\epsilon)$; $Y=e^{\frac{\pi}{2}i}\in T_y\mathbb R^2$
\end{itemize}

\noindent have associated a region that ``traps'' embedded bounded curvature paths, see Fig. \ref{motiv} right. More generally, spaces satisfying the previous two conditions admit a bounded isotopy class of bounded curvature paths, see Theorem 5.4. in \cite{paperd}. If $\epsilon=0$, the space satisfying the previous two conditions admit an isolated point, see Fig. \ref{figdiscde} left. 

\item Consider $(x,X),(y,Y)\in T \mathbb R^2$ with $\kappa=1$:

\begin{itemize}
\item  $x=(0,0)$; $X=e^{2\pi i}\in T_x\mathbb R^2$.
\item $y=(4-\epsilon,0)$; $0<\epsilon<4$; $Y=e^{2\pi i}\in T_y\mathbb R^2$. 
\end{itemize}

Intimately related to the previous observation is that the path $\gamma$ shown in Fig. \ref{motiv} (a non-embedded path) is not homotopic (while preserving the curvature bound throughout the deformation) to the line segment connecting $(x,X)$ to $(y,Y)$, see Corollary 7.13 in \cite{paperc}. In contrast, if $\epsilon\leq 0$, then these two paths are homotopic without violating the curvature bound \cite{paperd}. This fact is related with the existence of trapped regions \cite{paperc}. If a bound on curvature is not under consideration then, $\gamma$ and the line segment connecting $(x,X)$ to $(y,Y)$ are regular homotopic, see Fig. \ref{motiv}. 

%----------------------------------------------------------------------------------------
%	FIGURE 3
%----------------------------------------------------------------------------------------
\begin{figure}[h]
	\centering
	\includegraphics[width=.8\textwidth,angle=0]{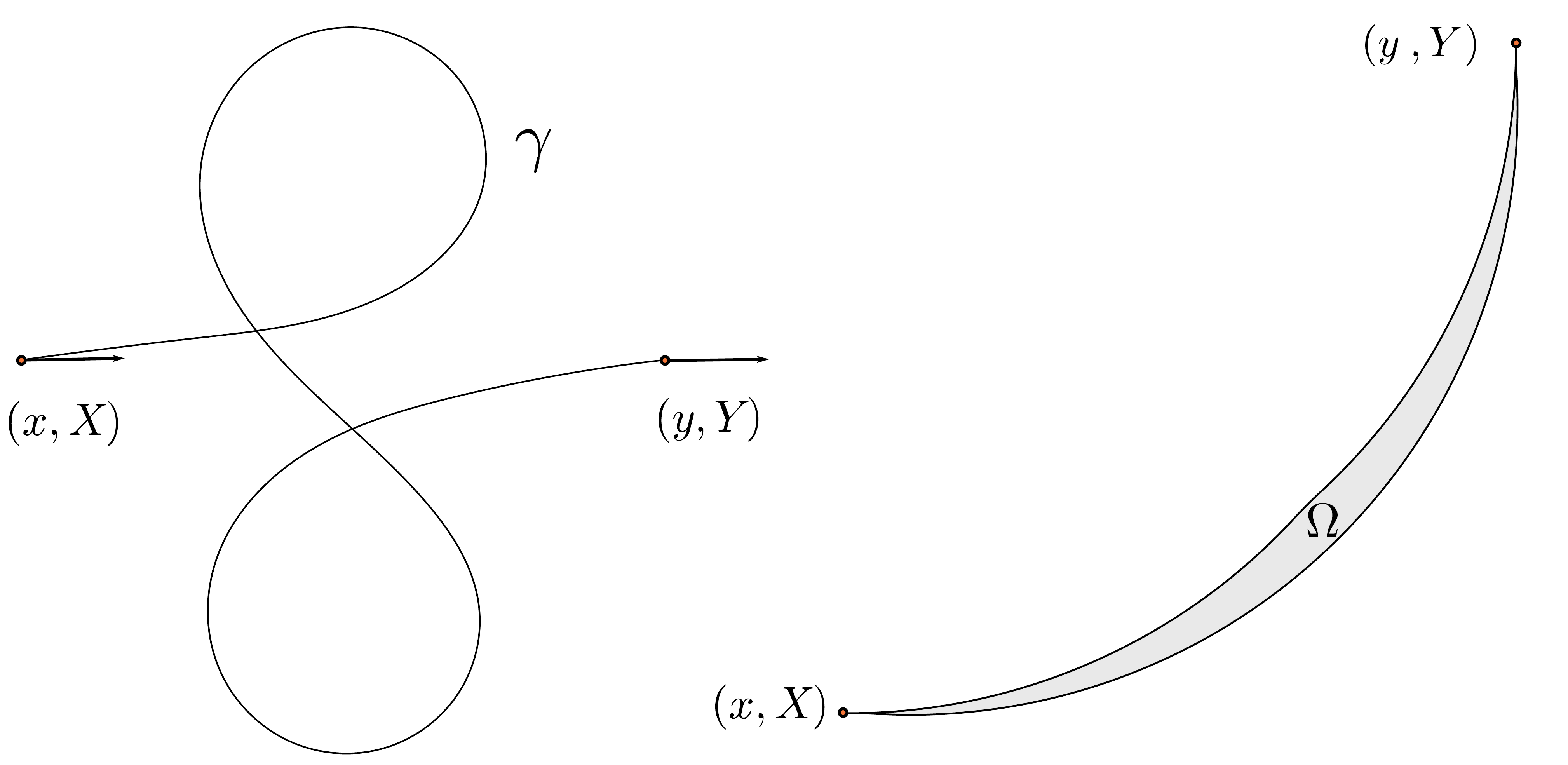}
	\caption{Right: A (zoomed out) trapped region $\Omega\subset \mathbb R^2$ obtained after perturbing the final position of a bounded curvature path being an isolated point, see Fig. \ref{figdiscde} left. Suddenly, the topology of the path space changes from a space of paths admitting an isolated point into a space of paths admitting a bounded isotopy class with non-empty interior. The elements of this isotopy class are only defined in $\Omega$. Left: An illustration of (5) in Example \ref{ex:1}. If $d(x,y)<4$, then $\gamma$ is not homotopic while preserving the curvature bound through the deformation to the line segment (the length minimizer) from $(x,X)$ to $(y,Y)$.}
	\label{motiv}
\end{figure}
\end{enumerate}
\end{example}

%----------------------------------------------------------------------------------------
%	SPACES OF BOUNDED CURVATURE PATHS
%----------------------------------------------------------------------------------------
\subsection{Spaces of bounded curvature paths}

\begin{definition} \label{admsp} Given $\mbox{\sc x,y}\in T{\mathbb R}^2$. The space of bounded curvature paths from {\sc x} to {\sc y} is denoted by $\Gamma(\mbox{\sc x,y})$.
\end{definition}

In this note we consider $\Gamma(\mbox{\sc x,y})$ with the topology induced by the $C^1$ metric. It is important to note that properties (among many others) such as types of connected components, or the number of local (global) minima in $\Gamma(\mbox{\sc x,y})$ depend on the endpoints in $T{\mathbb R}^2$ under consideration. 

%Next definition leads to a correspondence between an element in the 

Next, we make use of the fibre bundle structure of $T\mathbb R^2$ to describe families of spaces of bounded curvature paths.

\begin{definition}\label{famspa} Choose $\mbox{\sc x}\in T{\mathbb R}^2$ and $y \in \mathbb R^2$. Consider the family of pairs $\mbox{\sc x}, \mbox{\sc y}_\theta \in T\mathbb R^2$ with $\mbox{\sc y}_\theta=(y,Y_\theta)$; $ Y_\theta=e^{\theta i}\in T_y\mathbb R^2$, $\theta \in \mathbb R$. The one-parameter family of spaces of bounded curvature paths starting at $\mbox{\sc x}$ and finishing at $\mbox{\sc y}_\theta$ is called a {\it fiber} and is denoted by $\Gamma(\mbox{\sc x}, \mbox{\sc y}_\theta)$.
\end{definition}

Whenever we write: $\mbox{\sc x}, \mbox{\sc y}_\theta \in T\mathbb R^2$, $\theta \in \mathbb R$, we mean a family of pairs of endpoints so that: $\mbox{\sc x}\in T\mathbb R^2$ and $y\in \mathbb R^2$ are arbitrary but fixed while $\theta$ varies in the reals. Note that a space $\Gamma(\mbox{\sc x,y})$ is a representative of a family of spaces parametrized in the reals. 

\begin{definition}\label{gammaparameter}  Given $\mbox{\sc x}\in T{\mathbb R}^2$ we define:
$$\Gamma=\bigcup_{\substack{{y \in \mathbb R^2}\\ \theta \in \mathbb R}}\Gamma(\mbox{\sc x}, \mbox{\sc y}_\theta).$$
\end{definition}

In this note we develop a method for parametrizing the fibers in $\Gamma$ in terms of the types of connected components in $\Gamma(\mbox{\sc x}, \mbox{\sc y}_\theta)$, $\theta\in \mathbb R$. 

When a path is continuously deformed under parameter $p$ we reparametrize each of the deformed paths by its arc-length. In this fashion, $\gamma: [0,s_p]\rightarrow {\mathbb R}^2$ represents a deformed path at parameter $p$, with $s_p$ corresponding to its arc-length. 

\begin{definition}  \label{hom_adm} Given $\gamma,\eta \in \Gamma(\mbox{\sc x,y})$. A {\it bounded curvature homotopy}  between $\gamma: [0,s_0] \rightarrow  {\mathbb R^2}$ and $\eta: [0,s_1] \rightarrow  {\mathbb R^2}$ corresponds to a continuous one-parameter family of immersed paths $ {\mathcal H}_t: [0,1] \rightarrow \Gamma(\mbox{\sc x,y})$ such that:
\begin{itemize}
\item ${\mathcal H}_t(p): [0,s_p] \rightarrow  {\mathbb R}^2$ for $t\in [0,s_p]$ is an element of $\Gamma(\mbox{\sc x,y})$ for all $p\in [0,1]$.
\item $ {\mathcal H}_t(0)=\gamma(t)$ for $t\in [0,s_0]$ and ${\mathcal H}_t(1)=\eta(t)$ for $t\in [0,s_1]$.
\end{itemize}
\end{definition}

A bounded-curvature isotopy is a continuous one-parameter family of embedded bounded curvature paths. Two paths in $\Gamma(\mbox{\sc x,y})$ are {\it bounded-homotopic (bounded-isotopic)} if there exists a bounded curvature homotopy (isotopy) from one to the other. A {\it homotopy (isotopy) class} is a maximal path connected set in $\Gamma(\mbox{\sc x,y})$.

%\begin{definition} \label{admsp} Let $\Delta(\mbox{\sc x,y})$ be an isotopy class of paths in $\Gamma(\mbox{\sc x,y})$. And, $\mathcal B=\{\mbox{\sc x},\mbox{\sc y} \in T\mathbb R^2: \Delta(\mbox{\sc x,y})\neq \emptyset \}$.
%\end{definition}

\begin{definition} \label{admsp} Let $\Delta(\mbox{\sc x,y})$ be a non-empty bounded isotopy class of paths in $\Gamma(\mbox{\sc x,y})$. Let $\mathcal B$ denote the set of pairs $\mbox{\sc x,y} \in T\mathbb R^2$ for which $\Gamma(\mbox{\sc x,y})$ possesses such a bounded isotopy class. 
\end{definition}

%THIS MAY NOT BE NECESSARY There exists $\mbox{\sc x,y} \in T\mathbb R^2$ so that such bounded isotopy classes $\Delta(\mbox{\sc x,y})$ exist, hence $\mathcal B$ is non-empty.

In \cite{paperc} we proved that $\mathcal B\neq \emptyset$ by establishing the existence of non-empty bounded isotopy classes. Whenever we refer to $\Delta(\mbox{\sc x,y})$ we imply that $\Delta(\mbox{\sc x,y})$ is non-empty. In this note, we give necessary and sufficient conditions so that $\mbox{\sc x},\mbox{\sc y} \in T\mathbb R^2$ is an element in $\mathcal B$. As a consequence, we answer a question raised by Dubins in pp. 480 in \cite{dubins 2}. We establish that $\mathcal B$ is a bounded neither open nor closed subset in $T\mathbb R^2$, see Theorem \ref{noopnoclofib} and Corollary \ref{noopnoclo}.

%THIS MAY NOT BE NECESSARY A core result in \cite{paperc}, called the $S$-Lemma, relates the bound on curvature with the turning map and its extremals, to ultimately conclude that there exist $\mbox{\sc x,y}\in T\mathbb R^2$, so that $\Delta(\mbox{\sc x,y})\neq \emptyset$.

%----------------------------------------------------------------------------------------
%	PROXIMITY OF ENDPOINTS
%----------------------------------------------------------------------------------------
\subsection{Proximity of endpoints}\label{proxcon} 

Here we analyze the configurations of distinguished pairs of circles in $\mathbb R^2$. This approach permits us to reduce the configurations of endpoints in $T\mathbb R^2$ into a finite number of cases up to isometries. 

Consider $\mbox{\sc x}\in T\mathbb R^2$. Let $\mbox{\sc C}_ l(\mbox{\sc x})$ be the unit radius circle tangent to $x$ and to the left of $X$. The meaning of $\mbox{\sc C}_ r(\mbox{\sc x})$, $\mbox{\sc C}_ l(\mbox{\sc y})$ and $\mbox{\sc C}_ r(\mbox{\sc y})$ should be obvious. These circles are called {\it adjacent circles}. Denote the centers of the adjacent circles with lowercase letters. So, the center of $\mbox{\sc C}_ l(\mbox{\sc x})$ is $c_l(\mbox{\sc x})$, see Fig. \ref{fig:Cr} right. The other cases are analogous. 

We concentrate on the following configurations for the adjacent circles.

\begin{equation} d(c_l(\mbox{\sc x}),c_l(\mbox{\sc y}))\geq 4 \quad \mbox{and}\quad d(c_r(\mbox{\sc x}),c_r(\mbox{\sc y}))\geq4 \label{con_a}\tag{i}\end{equation}
\vspace{-1.5em}
 \begin{equation} d(c_l(\mbox{\sc x}),c_l(\mbox{\sc y}))< 4 \quad \mbox{and}\quad d(c_r(\mbox{\sc x}),c_r(\mbox{\sc y}))\geq 4 \label{con_b}\tag{ii} \end{equation}
 \vspace{-1.5em}
  \begin{equation} d(c_l(\mbox{\sc x}),c_l(\mbox{\sc y}))\geq4 \quad \mbox{and}\quad d(c_r(\mbox{\sc x}),c_r(\mbox{\sc y}))< 4  \label{con_b'}\tag{iii} \end{equation}
  \vspace{-1.5em}
   \begin{equation} d(c_l(\mbox{\sc x}),c_l(\mbox{\sc y}))< 4 \quad \mbox{and}\quad d(c_r(\mbox{\sc x}),c_r(\mbox{\sc y}))< 4 \label{con_c}\tag{iv} 
         \end{equation}
         
   The conditions (i)-(iv) have being used in different contexts through \cite{paperb,papera, paperc, paperd}. They give information about the topology and geometry of $\Gamma(\mbox{\sc x,y})$. Note that as planar configurations, (ii) and (iii) are equivalent up to isometries. 

%----------------------------------------------------------------------------------------
%	TRAPPED REGIONS AND ISOTOPY CLASSES
%----------------------------------------------------------------------------------------
\subsection{Trapped regions and bounded isotopy classes}\label{construct} 

In Theorem 5.4 in \cite{paperd} we proved that for certain $\mbox{\sc x,y}\in T{\mathbb R}^2$, the associated space $\Gamma(\mbox{\sc x,y})$ admits a bounded isotopy class $\Delta(\mbox{\sc x,y})$. It turns out that paths in $\Delta(\mbox{\sc x,y})$ are defined exclusively in a bounded region $\Omega\subset \mathbb R^2$. The shape of $\Omega$ depends on the initial and final positions and directions in $T{\mathbb R}^2$, see Fig. \ref{regparam}. 

For a precise explanation on how these regions $\Omega\subset \mathbb R^2$ are constructed, we strongly suggest the reader refer to Section 4 in \cite{paperc}. We call these regions {\bf trapped regions}. 

It is important to note that:
\begin{itemize}

\item Embedded paths in $\Omega$ cannot be deformed without violating the curvature bound to a path with a self-intersection, see Corollary 7.13 in \cite{paperc}.

\item Embedded paths in $\Omega$ are not bounded-homotopic to paths having a point not in $\Omega$, see Theorem 8.1 in \cite{paperc}. 

\item The proof of the existence of isolated points in spaces of bounded curvature paths was given in Theorem 3.9 in \cite{paperc}. These correspond to arcs of a unit circle of length less to $\pi$, called {\bf {\sc c} isolated points}. Similarly, a concatenation of two arcs of unit circle, each of length less to $\pi$, are called {\bf {\sc cc} isolated points}. Isolated points in $\Gamma(\mbox{\sc x,y})$ are bounded isotopy classes with empty interior, see Fig. \ref{figdiscde} left, and Fig. \ref{figgenpos} left. In addition, bounded curvature paths of length zero are also isolated points. This observation becomes interesting after recalling the concept of simple connectedness. Closed bounded curvature paths are not bounded-homotopic to a single point. 
\end{itemize}

\begin{remark}\label{rem:empty} Suppose that for $\mbox{\sc x,y}\in T{\mathbb R}^2$ we have that $\Gamma(\mbox{\sc x,y})$ does not admit a bounded isotopy class $\Delta(\mbox{\sc x,y})$. Then, embedded trapped paths cannot exist. We adopt the notation $\Delta(\mbox{\sc x,y})$, rather than $\Delta(\Omega)$ as we did in \cite{paperc, paperd}, since our emphasis now is on the endpoints rather than the regions $\Omega\subset \mathbb R^2$. We prefer to write $\Omega$ instead of $\Omega(\mbox{\sc x,y})$.
\end{remark}

The classification theorem for the homotopy classes in \cite{paperd} required the proximity conditions A, B, C, and D, see \cite{paperc,paperd}. Next, we redefine conditions C and D in terms of the existence of bounded isotopy classes, see Fig. \ref{figproxcondabcd}.

%After the classification theorem for the homotopy classes in \cite{paperd} we can redefine proximity conditions A, B, C, and D in \cite{paperc,paperd} in terms of the existence of bounded isotopy classes, see Fig. \ref{figproxcondabcd}.

%----------------------------------------------------------------------------------------
%	FIGURE 4
%----------------------------------------------------------------------------------------
\begin{figure}[h]
	\centering
	\includegraphics[width=.8\textwidth,angle=0]{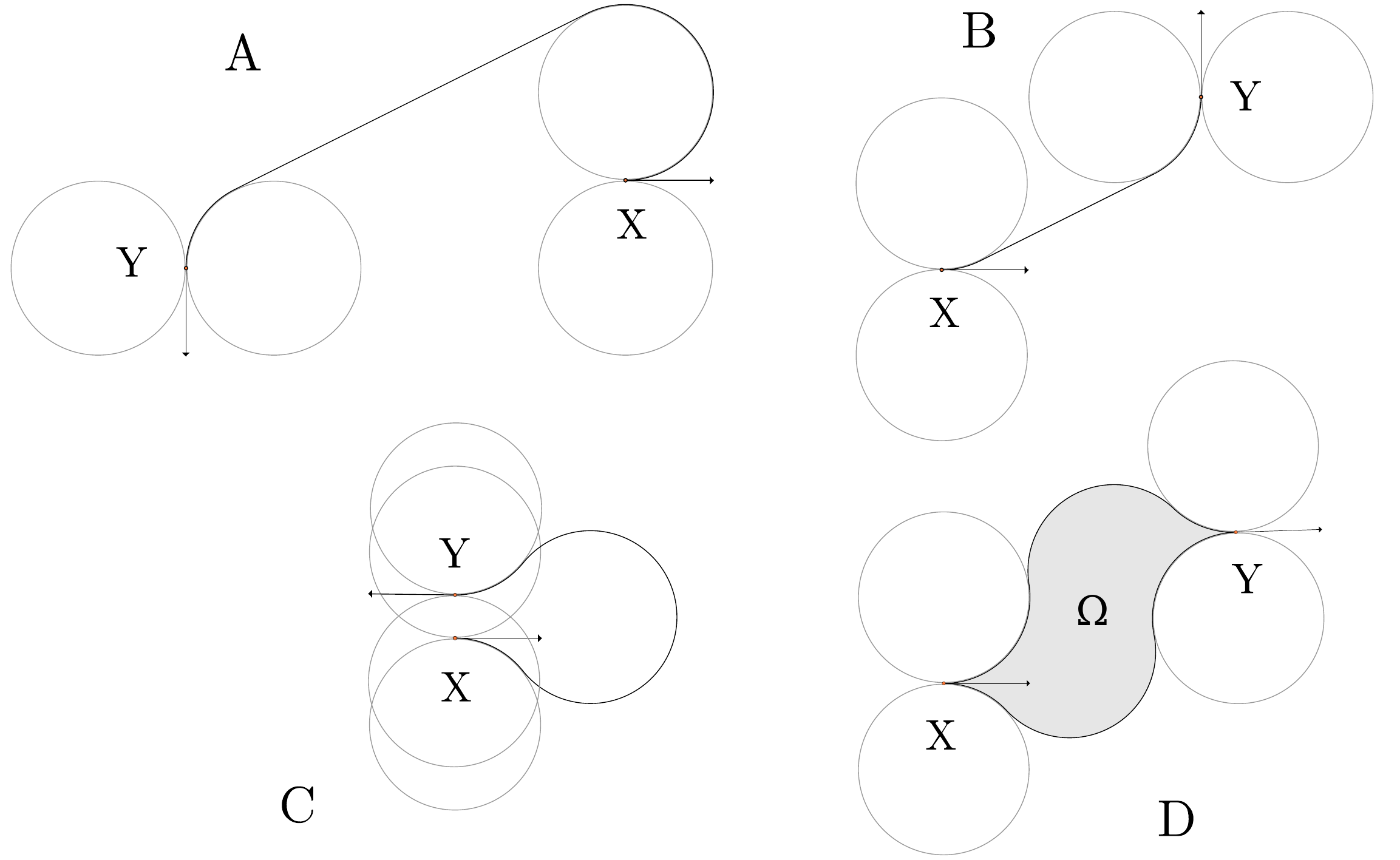}
	\caption{Examples of bounded curvature paths in spaces satisfying conditions A, B, C and D.}
	\label{figproxcondabcd}
\end{figure}

\begin{definition}\label{procon}
If $\mbox{\sc x,y}\in T{\mathbb R}^2$ satisfies:
\begin{itemize}
\item (i) then $\Gamma({\mbox{\sc x,y}})$  is said to satisfy proximity condition {\sc A}.
\item (ii) or (iii) then $\Gamma({\mbox{\sc x,y}})$ is said to satisfy proximity condition {\sc B}.
\item (iv) and there is no bounded isotopy class $\Delta({\mbox{\sc x,y}})$ then $\Gamma({\mbox{\sc x,y}})$ is said to satisfy proximity condition {\sc C}.
\item (iv) and there exists a bounded isotopy class $\Delta({\mbox{\sc x,y}})$ then $\Gamma({\mbox{\sc x,y}})$ is said to satisfy proximity condition {\sc D}.  
\end{itemize}
\end{definition}

\noindent In Theorem \ref{maincensus1} we clarify for what $\mbox{\sc x,y}\in T\mathbb R^2$ we have that $\Delta(\mbox{\sc x,y})\subset \Gamma(\mbox{\sc x,y})$. To this end, we group spaces of bounded curvature paths in terms of the type of connected components that they have.

%----------------------------------------------------------------------------------------
%	AN UNDERLYING DISCRETE STRUCTURE
%----------------------------------------------------------------------------------------
\section{An underlying discrete structure}\label{underlying} \label{Crl} 

Next, we describe the coordinates of distinguished points in $\mathbb R^2$. The configurations of these points reveal interesting features of $\Gamma(\mbox{\sc x,y})$, $\mbox{\sc x,y}\in T\mathbb R^2$. In particular, these points completely characterize the regions $\Omega\subset \mathbb R^2$ whenever they exist. 

%----------------------------------------------------------------------------------------
%	FIGURE 5
%----------------------------------------------------------------------------------------
\begin{figure}[h]
	\centering
	\includegraphics[width=1\textwidth,angle=0]{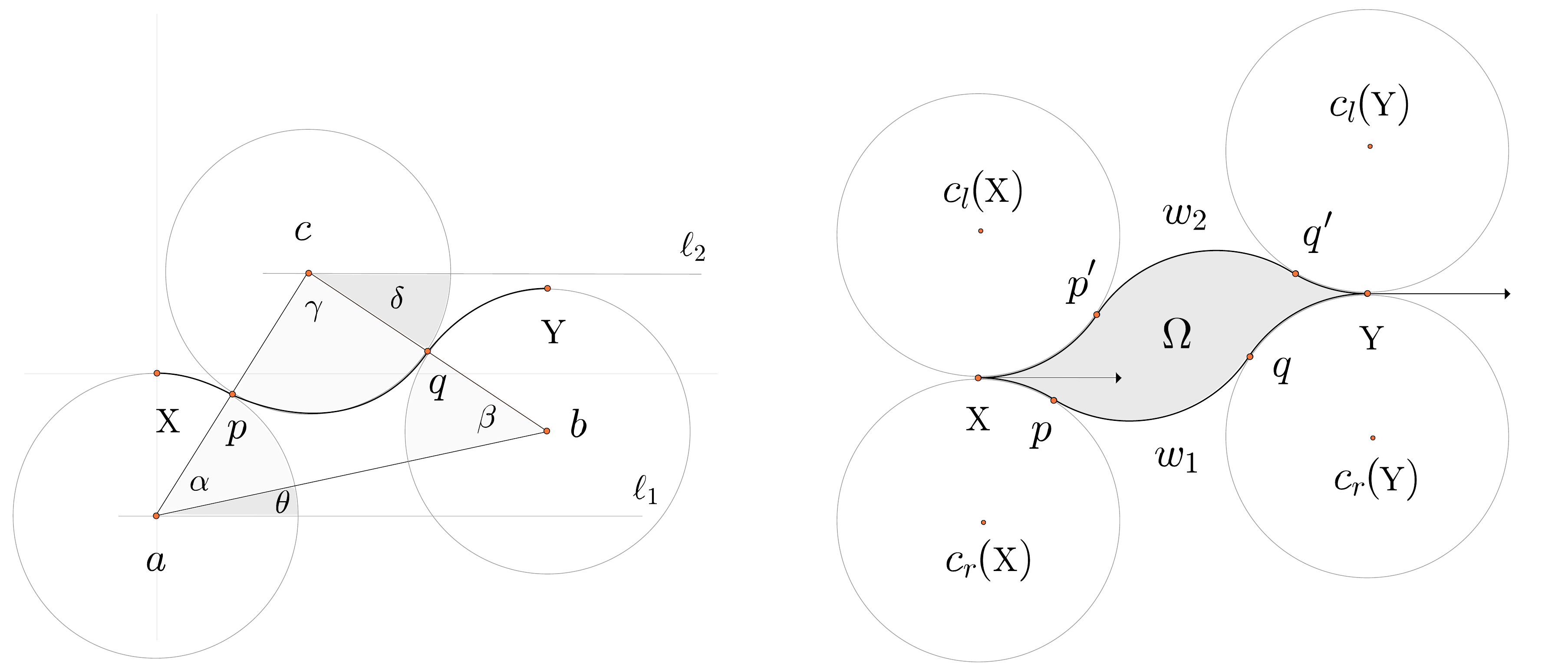}
	\caption{Right: Notation associated with $\Omega\subset \mathbb R^2$. Left: The angles involved when computing the coordinates of $p,q\in \mathbb R^2$.}
	\label{fig:Cr}
\end{figure}

Consider unit radius circles $A$ and $B$ with centers $a=(a_1,a_2)$ and $b=(b_1,b_2)$ respectively. Consider a unit radius circle $C$ with center $c$ tangent to $A$ and $B$ at $p$ and $q$ respectively, see Figure \ref{fig:Cr} left. Also, set $\mbox{\sc x,y}\in T\mathbb R^2$ so that a {\sc ccc} path is obtained. Suppose the coordinates of $a$ and $b$ are known. Next we determine the coordinates of the points $p$ and $q$. 

Consider the triangle whose vertices are $a$, $b$, and $c$. Denote by $\theta$ the smallest angle made by the line passing through $a$ and $b$ and the horizontal axis according to Remark \ref{coo}, see Figure \ref{fig:Cr} left. Here $\ell_1$ and $\ell_2$ are parallel to the horizontal axis. Denote by $\delta$ the smallest angle made by the line passing through $b$ and $c$ and the horizontal axis. It is easy to see that $d(a,c)=d(c,b)=2$. 

After applying the law of cosines we immediately obtain that:

\begin{equation*} 
\alpha =\arccos\bigg({\frac{\sqrt{(b_1-a_1)^2+(b_2-a_2)^2}}{4}}\bigg)
\end{equation*}
\begin{equation*} 
\theta = \arctan\bigg({\frac{b_2-a_2}{b_1-a_1}}\bigg)
\end{equation*}
\begin{equation*} 
\delta = \arctan\bigg({\frac{b_2-c_2}{b_1-c_1}}\bigg)
\end{equation*}
\begin{equation*} 
	\label{eq:cruv}	
	c=(a_1 + 2\cos(\alpha+\theta), a_2 + 2\sin{(\alpha+\theta}))
\end{equation*}
\vspace{0em}
\begin{equation} 
	\label{eq:i1}	
	p = (a_1 + \cos({\alpha+\theta}), a_2 + \sin({\alpha+\theta}))
\end{equation}
\vspace{0em}
\begin{equation} 
	\label{eq:i3}	
	q= (b_1 + \cos{\delta},b_2 + \sin{\delta})
\end{equation}
\vspace{0.1em}

By letting $A=\mbox{\sc C}_ r(\mbox{\sc x})$ and  $B=\mbox{\sc C}_ r(\mbox{\sc y})$ we find explicit formulas for the point $p$ between $A$ and $C$ and $q$ between $C$ and $B$, see Fig. \ref{fig:Cr} left. Observe that the coordinates of $c_r(\mbox{\sc x})$ and $c_ r(\mbox{\sc y})$ are easily obtained since $\mbox{\sc x,y}\in T\mathbb R^2$ are given. 

Analogously, by letting $A=\mbox{\sc C}_ l(\mbox{\sc x})$ and  $B=\mbox{\sc C}_ l(\mbox{\sc y})$, and by applying the same reasoning as before, we find formulas for the points $p'$ between $A$ and $C'$ and $q'$ between $C'$ and $B$ (see Fig. \ref{fig:Cr} right):

\begin{equation*} 
\alpha' =\arccos\bigg({\frac{\sqrt{(b_1-a_1)^2+(b_2-a_2)^2}}{4}}\bigg)
\end{equation*}
\begin{equation*} 
\theta' = \arctan\bigg({\frac{b_2-a_2}{b_1-a_1}}\bigg)
\end{equation*}
\begin{equation*} 
\delta' = \arctan\bigg({\frac{b_2-c_2}{b_1-c_1}}\bigg)
\end{equation*}
\begin{equation*} 
	\label{eq:cluv}	
	c'=(a_1 + 2\cos(\alpha'+\theta'), a_2 + 2\sin{(\alpha'+\theta'}))
\end{equation*}
\vspace{0em}
\begin{equation} 
	\label{eq:i2}	
	p' = (a_1 + \cos({\alpha'+\theta'}), a_2 + \sin({\alpha'+\theta'}))
\end{equation}
\vspace{0em}
\begin{equation} 
	\label{eq:i4}	
	q' = (b_1 + \cos{\delta'},b_2 + \sin{\delta'})
\end{equation}

\begin{definition}\label{boundomega} Let: 
\begin{itemize}
\item $w_1$ be the {\sc rlr} path consisting of an arc from $x$ to $p$ in $\mbox{\sc C}_ r(\mbox{\sc x})$; an arc from $p$ to $q$ in $C$; and an arc from $q$ to $y$ in $\mbox{\sc C}_ r(\mbox{\sc y})$, see equations (\ref{eq:i1}) and (\ref{eq:i3}).
\item $w_2$ be the {\sc lrl} path consisting of an arc from $x$ to $p'$ in $\mbox{\sc C}_ l(\mbox{\sc x})$; arc from $p'$ to $q'$ in $C'$; and an arc from $p'$ to $y$ in $\mbox{\sc C}_ l(\mbox{\sc y})$, see equations (\ref{eq:i2}) and (\ref{eq:i4}).
\end{itemize}
\end{definition}

 Next we make use of the formaulae (\ref{eq:i1})-(\ref{eq:i4}) to characterize $\Omega\subset \mathbb R^2$ in terms of the coordinates of distinguished points.

\begin{definition}\label{omegadfn} Assume $\Gamma(\mbox{\sc x}, \mbox{\sc y})$ satisfies condition {\sc D}. Let $\Omega\subset \mathbb R^2$ be the bounded region whose boundary is given by the union of $w_1$ and $w_2$ in Definition \ref{boundomega}, see Fig. \ref{fig:Cr} right. In this case we say that $\mbox{\sc x,y}\in T\mathbb R^2$ {\it carries a region}.
\end{definition}

%----------------------------------------------------------------------------------------
%	MOTIVATION THROUGH EXAMPLES
%----------------------------------------------------------------------------------------
\section{Motivation through examples }\label{moduli}

In narrative terms, here we present facts in reverse-chronology. By considering this strategy, we are telling the reader ``the end of the story'' through various examples, with the intention to motivate the more technical steps and proofs. 

We study the fibers in $\Gamma$ by fixing $\mbox{\sc x}=(x,X)\in T\mathbb R^2$ and a final position $y\in \mathbb R^2$ while varying the final direction $Y_{\theta}\in T_y\mathbb R^2$, $\theta \in \mathbb R$. In Section \ref{classrangesec} we construct a function, called the {\bf class range}, that assigns to each final position $y\in \mathbb R^2$ a non-negative real number. This number is called the class value and gives the range $\theta$ can vary so that the spaces $\Gamma(\mbox{\sc x}, \mbox{\sc y}_{\theta})$ have the same types of connected components. 

Firstly, we would like to point out that for a fixed $\theta\in \mathbb R$, the spaces $\Gamma(\mbox{\sc x}, \mbox{\sc y}_{k\theta})$ and $\Gamma(\mbox{\sc x}, \mbox{\sc y}_{j\theta})$ may eventually be different for $j\neq k\in \mathbb Z$. Secondly, consider $\mbox{\sc x}, \mbox{\sc y}_{\theta}\in T\mathbb R^2$, so that $\theta=\pm \pi$. Since the initial and final tangent vectors are parallel having opposite sense, the pairs $\mbox{\sc x}, \mbox{\sc y}_{\pm \pi}\in T\mathbb R^2$ do not carry a region $\Omega\subset \mathbb R^2$. This is due to the existence of parallel tangents, see \cite{papere}. 

\begin{definition}\label{defw}Consider $\mbox{\sc x}, \mbox{\sc y}_\theta \in T\mathbb R^2$, $\theta\in (-\pi,\pi)$, so that $\Gamma(\mbox{\sc x}, \mbox{\sc y}_{\theta})$ satisfy proximity condition {\sc D}. Let
\begin{itemize}
 \item $\omega_- $ be the smallest value in $(-\pi,\pi)$ so that there exits a bounded isotopy class $\Delta(\mbox{\sc x}, \mbox{\sc y}_\theta)$;
 
 \item $\omega_+ $ be the greatest value in $(-\pi,\pi)$ so that there exits a bounded isotopy class $\Delta(\mbox{\sc x}, \mbox{\sc y}_\theta)$.
 
   \end{itemize}
   An interval whose endpoints are $\omega_-$ and $\omega_+$ is denoted by $I(y)$. We refer to $\omega_- $ and $\omega_+$ as critical angles.
\end{definition}

\begin{remark}\hfill 
\begin{itemize} 
\item The critical angles $\omega_-$ and $\omega_+ $ depend on $y\in \mathbb R^2$ since in Remark \ref{coo} we established that $(x,X)\in T\mathbb R^2$ is fixed. Sometimes we write $\omega_-=\omega_-(y)$ and $\omega_-=\omega_-(y)$. 
\item From the way we construct the class range function (see Definition \ref{classrange+}) the existence of $\omega_-$ and $\omega_+$ is guaranteed.
\end{itemize}
\end{remark}

The examples in \ref{paramex} and the illustrations in Fig. \ref{regparam} have been obtained computationally \cite{dubinsexplorer} by evaluating the class range function in Definition \ref{classrange+} via equations (\ref{eq:Wmin}) and (\ref{eq:Wmax}). Throughout this note, whenever we consider examples obtained computationally, angles will be measured in degrees. The following ideas will be formalized in Sections \ref{classrangesec} and \ref{classdomain}, compare Definition \ref{def:spaces}.

%----------------------------------------------------------------------------------------
%	                         PARAMETRIZING FIBERS
%----------------------------------------------------------------------------------------
\begin{example}[\bf parametrizing fibers]\label{paramex}
In Fig. \ref{regparam} top, we show a sequence for the variation of $\mbox{\sc x}, \mbox{\sc y}_\theta\in T\mathbb R^2$ while $\theta$ ranges over $I(y) \subsetneq (-180^{\circ},180^{\circ})$ illustrating the following example.

Consider $x=(0,0)\in \mathbb R^2$, $X=(1,0)\in T_x\mathbb R^2$, $y=(2.82,0)\in\mathbb R^2$,$Y_{\theta}=e^{i\theta}\in T_y\mathbb R^2$. We determine that $I(y)=[-109.47^{\circ},109.47^{\circ}]$ and conclude that $\Gamma(\mbox{\sc x}, \mbox{\sc y}_\theta)$ admits (from left to right and counterclockwise) spaces of bounded curvature paths being:

\begin{itemize} 
\item An isolated point for $\theta=-109.47^{\circ}$, see Theorem 3.9 in \cite{paperc}. 
\item There exists a bounded isotopy class $\Delta(\mbox{\sc x}, \mbox{\sc y}_\theta)$ for $\theta \in (-109.47^{\circ},109.47^{\circ})$ i.e., a one-parameter family of bounded isotopy classes, see Theorem \ref{existvect}. Also see Theorem 8.1 in \cite{paperc}.
\item An isolated point for $\theta=109.47^{\circ}$. 
\item If $\theta \notin [-109.47^{\circ},109.47^{\circ}]$ then there is no bounded $\Delta(\mbox{\sc x}, \mbox{\sc y}_\theta)$.
\end{itemize}

In this case we say that $\Gamma(\mbox{\sc x}, \mbox{\sc y}_\theta)$ is a {\bf fiber of type I}.

%----------------------------------------------------------------------------------------
%	FIGURE 6
%----------------------------------------------------------------------------------------
\begin{figure}[h]
	\centering
	\includegraphics[width=1\textwidth,angle=0]{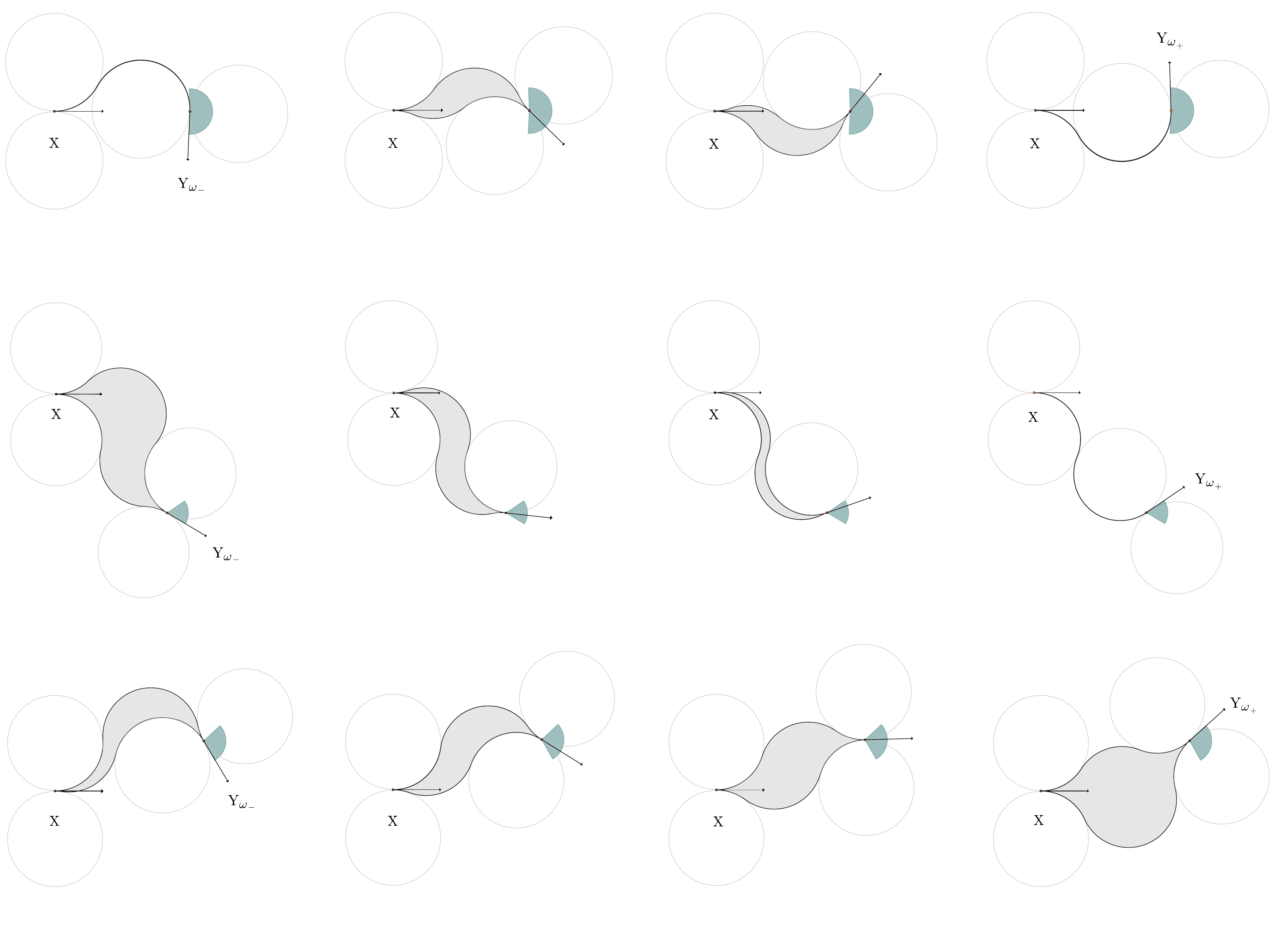}
	\caption{The grey regions are examples of $\Omega\subset \mathbb R^2$. We illustrate the examples in Remark \ref{paramex} computed and plotted according to Definition \ref{classrange+} via equations (\ref{eq:Wmin}) and (\ref{eq:Wmax}). The range where $\theta$ can vary is depicted in dark green.}
	\label{regparam}
\end{figure}

In Fig. \ref{regparam} middle we show a sequence for the variation of $\mbox{\sc x}, \mbox{\sc y}_\theta\in T\mathbb R^2$ while $\theta$ ranges over $I(y) \subsetneq (-180^{\circ},180^{\circ})$ illustrating the following example.

Consider $x=(0,0)\in \mathbb R^2$, $X=(1,0)\in T_x\mathbb R^2$, $y=(2.5,-2)\in \mathbb R^2$, $Y_{\theta}=e^{i\theta}\in T_y\mathbb R^2$. We determine that $I(y)=[-48.36^{\circ},30.30^{\circ})$ and conclude that $\Gamma(\mbox{\sc x}, \mbox{\sc y}_\theta)$ contains (from left to right and counterclockwise) spaces of bounded curvature paths such that:

\begin{itemize}
\item There exists a bounded isotopy class $\Delta(\mbox{\sc x}, \mbox{\sc y}_\theta)$ for $\theta \in [-48.36^{\circ},30.30^{\circ})$ i.e., a one-parameter family of bounded isotopy classes.
\item An isolated point for $\theta =30.30^{\circ}$. 
\item If $\theta \notin  [-48.36^{\circ},30.30^{\circ})$ then there is no bounded $\Delta(\mbox{\sc x}, \mbox{\sc y}_\theta)$.
\end{itemize}

In this case we say that $\Gamma(\mbox{\sc x}, \mbox{\sc y}_\theta)$ is a  {\bf fiber of type II}.

In Fig. \ref{regparam} bottom we show a sequence for the variation of $\mbox{\sc x}, \mbox{\sc y}_\theta\in T\mathbb R^2$ while $\theta$ ranges over $I(y) \subsetneq (-180^{\circ},180^{\circ})$ illustrating the following example.

 Consider $x=(0,0)\in \mathbb R^2$, $X=(1,0)\in T_x\mathbb R^2$, $y=(3,0.5)\in \mathbb R^2$ and $Y_{\theta}=e^{i\theta}\in T_y\mathbb R^2$ we determine that $I(y)=[-80.42^{\circ},60.55^{\circ}]$ and conclude that $\Gamma(\mbox{\sc x}, \mbox{\sc y}_\theta)$ contains (from left to right and counterclockwise) spaces of bounded curvature paths being:

\begin{itemize}
\item There exists a bounded isotopy class $\Delta(\mbox{\sc x}, \mbox{\sc y}_\theta)$ for $\theta \in  [-80.42^{\circ},60.55^{\circ}]$  i.e., a one-parameter family of bounded isotopy classes.
\item If  $\theta \notin [-80.42^{\circ},60.55^{\circ}]$ then there is no bounded $\Delta(\mbox{\sc x}, \mbox{\sc y}_\theta)$.
\end{itemize}

In this case we say that $\Gamma(\mbox{\sc x}, \mbox{\sc y}_\theta)$ is a  {\bf fiber of type III}.
\end{example}

There are two more types of fibers, these will be discussed in Definition \ref{def:spaces}.

 In Section \ref{classdomain}  we characterize a region $B\subset \mathbb R^2$ so that the class range is well defined. This plane region corresponds exactly to the location for the final positions $y\in \mathbb R^2$ so that $\Gamma(\mbox{\sc x}, \mbox{\sc y}_\theta)$ admits a family of bounded isotopy classes of bounded curvature paths.
 
Next we explain two types of configurations that will be of relevance when determining the extreme values of the class range function. 
%----------------------------------------------------------------------------------------
%	TOPOLOGICAL TRANSITIONS
%----------------------------------------------------------------------------------------
\section{Topological transitions}\label{crit}

 Consider a {\sc cc} isolated point as shown in Fig. \ref{figgenpos} left. After a small clockwise continuous perturbation on the final direction (while fixing the final position) the resultant endpoints define a space that does not admit an isolated point, see Fig. \ref{figgenpos} middle and right. This is true since the paths at middle and right (the length minimisers in their respective space) are parallel homotopic to paths of arbitrary length due to the existence of parallel tangents, see Corollary 3.4 and Proposition 3.8 in \cite{papere}. By Corollary 7.13 in \cite{paperc} these paths are not elements in bounded isotopy classes. 

It is fairly easy to see that a small counterclockwise perturbation on the final direction of the {\sc cc} isolated point in Fig. \ref{figgenpos} leads to spaces admitting a bounded isotopy class.

\begin{itemize}
\item[(1)] For certain fibers, the {\sc cc} isolated points are transitions between spaces with different types of connected components.
 \end{itemize}
 
 The previous observations say implicitly that for certain fibers the critical values $\omega_-$ and $\omega_+$ are achieved at spaces admitting {\sc cc} isolated points.

%----------------------------------------------------------------------------------------
%	FIGURE 7
%----------------------------------------------------------------------------------------
\begin{figure}[h]
	\centering
	\includegraphics[width=.7\textwidth,angle=0]{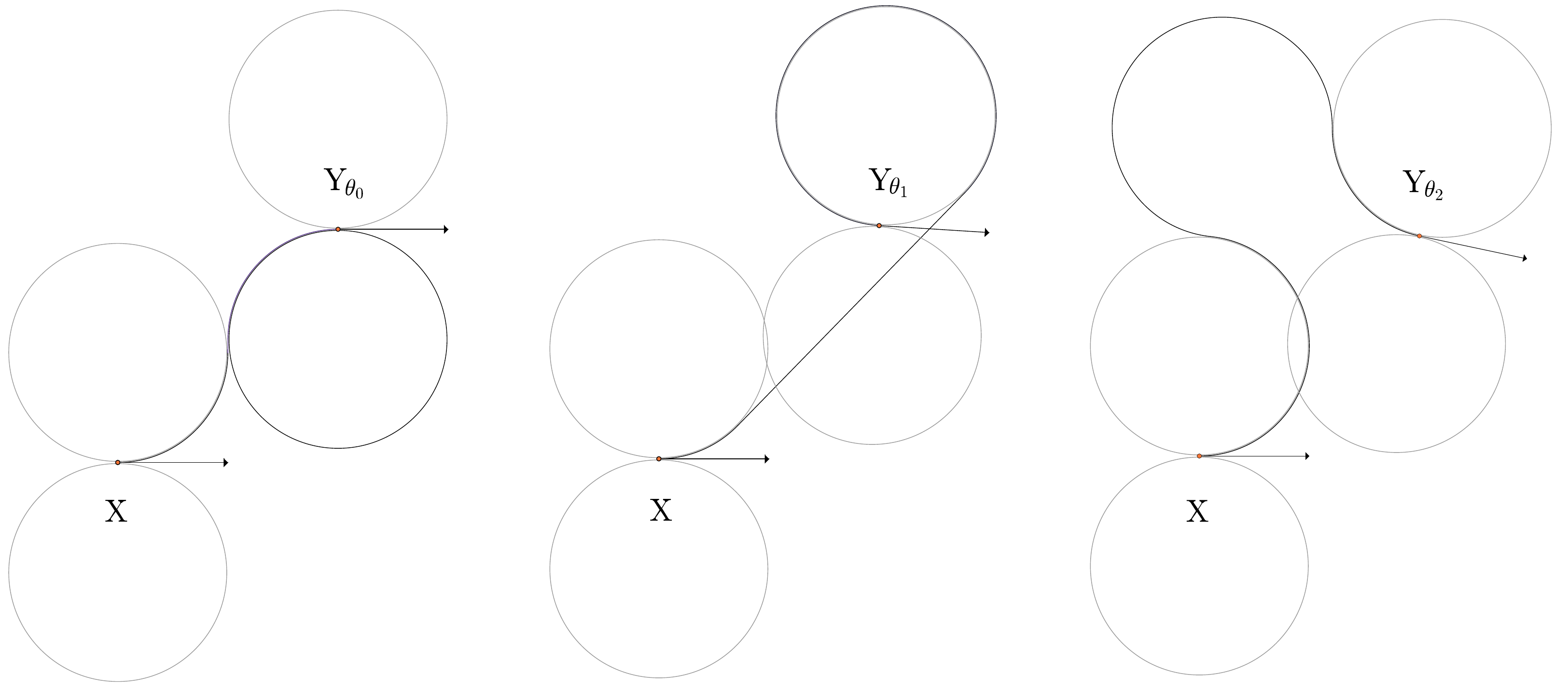}
	\caption{Two types of discontinuities. When varying the final vector of an isolated point (at the left) we obtain a length discontinuity. The third path shows that length discontinuities not only happen when perturbing directions of isolated points. Here $x=(0,0)\in \mathbb R^2$, $X=(1,0)\in T_x\mathbb R^2$, $y=(2,2)\in \mathbb R^2$, and $e^{i \theta }=Y_\theta \in T_y\mathbb R^2$ with $\theta_0=0^{\circ}$,  $\theta_1=-6^{\circ}$,  $\theta_2=-12^{\circ}$.}
	\label{figgenpos}
\end{figure}

Recall that a necessary condition for the existence of a bounded isotopy class $\Delta(\mbox{\sc x}, \mbox{\sc y})$ is that $\mbox{\sc x,y}\in T\mathbb R^2$ satisfy: 
$$d(c_l(\mbox{\sc x}),c_l(\mbox{\sc y}))< 4 \quad \mbox{and}\quad d(c_r(\mbox{\sc x}),c_r(\mbox{\sc y}))< 4.$$

This is easy to see since: if $d(c_l(\mbox{\sc x}),c_l(\mbox{\sc y}))\geq 4$, then a unit disk can be placed in the line joining $c_l(\mbox{\sc x})$ to $c_l(\mbox{\sc y})$ without overlapping with $C_l(\mbox{\sc x})$ or $C_l(\mbox{\sc y})$. This implies that bounded curvature paths may escape $\Omega\subset \mathbb R^2$ (after applying an operation of type {\sc II} in \cite{paperd} to the length minimiser in $\Omega$) contradicting Theorem 8.1 in \cite{paperc}. For details we recommend the reader refer to Section 4 in \cite{paperc}. The same applies for $d(c_r(\mbox{\sc x}),c_r(\mbox{\sc y}))\geq 4$.

\begin{itemize}
\item[(2)]  For certain fibers, the condition
\begin{equation} d(c_l(\mbox{\sc x}),c_l(\mbox{\sc y}))=4 \quad \mbox{and}\quad d(c_r(\mbox{\sc x}),c_r(\mbox{\sc y}))=4 \label{con_t}\end{equation}
 \end{itemize}
is considered as a transition between spaces with different types of connected components. We proved in Theorem 5.3 in \cite{paperd} that spaces $\Gamma(\mbox{\sc x},\mbox{\sc y})$ satisfying proximity condition A, that is:
\begin{equation*} d(c_l(\mbox{\sc x}),c_l(\mbox{\sc y}))\geq4 \quad \mbox{or}\quad d(c_r(\mbox{\sc x}),c_r(\mbox{\sc y}))\geq 4 \label{con_g}\end{equation*}

do not admit isotopy classes. Therefore, $\Omega=\emptyset$.

%----------------------------------------------------------------------------------------
%	ANGULAR FORMULAE
%----------------------------------------------------------------------------------------
\section{Angular formulae}\label{trans}
We describe two types auxiliary triangles that allow us to obtain (via continuous variations of their angles) the values of the class range function.  These triangles are constructed out of information obtained from the given endpoints in $T\mathbb R^2$. We establish a correlation between the angle variation in these auxiliary triangles and the types of connected components in $\Gamma(\mbox{\sc x}, \mbox {\sc y}_\theta)$, $\mbox{\sc x},\mbox{\sc y}_\theta\in T\mathbb R^2$, for each $\theta \in (-\pi,\pi)$.  

Next we consider fibers whose critical values $\omega_-$ and $\omega_+$ are achieved at spaces admitting {\sc cc} isolated points as disscused in (1) in Section \ref{crit}.

%----------------------------------------------------------------------------------------
%	SHORT TRIANGLES
%----------------------------------------------------------------------------------------
\subsection{Short triangles}\label{short}  Suppose $\mbox{\sc x}, \mbox{\sc y}_\theta \in T\mathbb R^2$, $\theta \in (-\pi,\pi)$ is such that for some $\theta\in (-\pi,\pi)$ the adjacent circles $\mbox{\sc C}_l(\mbox{\sc x})$ and $\mbox{\sc C}_r(\mbox{\sc y}_{\theta})$ intersect at a single point. We have that $\theta=\omega_-$ or $\theta=\omega_+$, see Fig. \ref{shortriang1B}. For $\theta=\omega_-$, construct a triangle whose vertices are $c_l({\mbox{\sc x}}), c_r(\mbox{\sc y}_{\omega_-})$ and $y$. It is immediate that $d(c_l({\mbox{\sc x}}), c_r(\mbox{\sc y}_{\omega_-}))=2$ and that $d(c_r(\mbox{\sc y}_{\omega_-}),y)=1$. For $\theta=\omega_+$, construct the triangle whose vertices are $c_r({\mbox{\sc x}}), c_l(\mbox{\sc y}_{\omega_+})$ and $y$, see Fig \ref{shortriang2}. It is immediate that $d(c_r({\mbox{\sc x}}), c_l(\mbox{\sc y}_{\omega_+}))=2$ and that $d(c_l(\mbox{\sc y}_{\omega_+}),y)=1$. 

The obvious observation: A triangle with sides of length $1$ and $2$ cannot have a third side of length greater to $3$ leads us to analyze the transitions in (2) in Section \ref{crit}.

%----------------------------------------------------------------------------------------
%	LONG TRIANGLES
%----------------------------------------------------------------------------------------
\subsection{Long triangles}\label{long} Consider $\mbox{\sc x}, \mbox{\sc y}_\theta \in T\mathbb R^2$, $\theta \in (-\pi,\pi)$ so that the adjacent circles $\mbox{\sc C}_l(\mbox{\sc x})$ and $\mbox{\sc C}_r(\mbox{\sc y}_{{\theta}})$ do not intersect. We construct the triangle whose vertices are $c_l({\mbox{\sc x}}), c_l(\mbox{\sc y}_\theta)$ and $y$, see Fig. \ref{fig:Triangle3}. Note that $d(c_l(\mbox{\sc y}_\theta),y)=1$, and $d(c_l({\mbox{\sc x}}), y)>3$, see Fig. \ref{fig:Triangle3}. 

In case the adjacent circles $\mbox{\sc C}_r(\mbox{\sc x})$ and $\mbox{\sc C}_l(\mbox{\sc y}_\theta)$ do not intersect, we construct the triangle whose vertices are $c_r({\mbox{\sc x}}), c_r(\mbox{\sc y}_\theta)$ and $y$, see Fig. \ref{fig:Triangle4}. Note that $d(c_r(\mbox{\sc y}_\theta),y)=1$, and $d(c_r({\mbox{\sc x}}), y)>3$. 

Next, we look closely at short and long triangles. Their sides are denoted in capital letters while the length of their sides are denoted in lowercase i.e., the side $S_i$ has length $s_i$. 
 
We keep a certain degree of detail for {short triangles} and subsequently reduce the details in the discussions for {long triangles} assuming the analogy in ideas and notation with short triangles.

%----------------------------------------------------------------------------------------
%	 ANGULAR FORMULA FOR SHORT TRIANGLES
%----------------------------------------------------------------------------------------
\subsection{Angular formulae for short triangles} \label{angformshort}
To avoid confusion, the abscissa and ordinate in the coordinate system in Remark \ref{coo} are denoted by $u$-axis and $v$-axis respectively. 

Using the notaion from Fig. \ref{shortriang1B} left, consider the triangle whose vertices are $c_l({\mbox{\sc x}}), c_r(\mbox{\sc y})$ and $y$. It is easy to see that this triangle has sides of length $a_1=2$, $b_1=1$, and $c_1=d(c_l(\mbox{\sc x}),y)$. In addition, since the endpoints are given, we can easily obtain the coordinates of the vertices of the triangle under consideration. By the law of cosines we can obtain the angles $\alpha_1$, $\beta_1$, and $\gamma_1$. 

%%----------------------------------------------------------------------------------------
%%	FIGURE 8
%%----------------------------------------------------------------------------------------
\begin{figure}
\begin{subfigure}{.5\textwidth}
  \centering
  \includegraphics[width=1\linewidth]{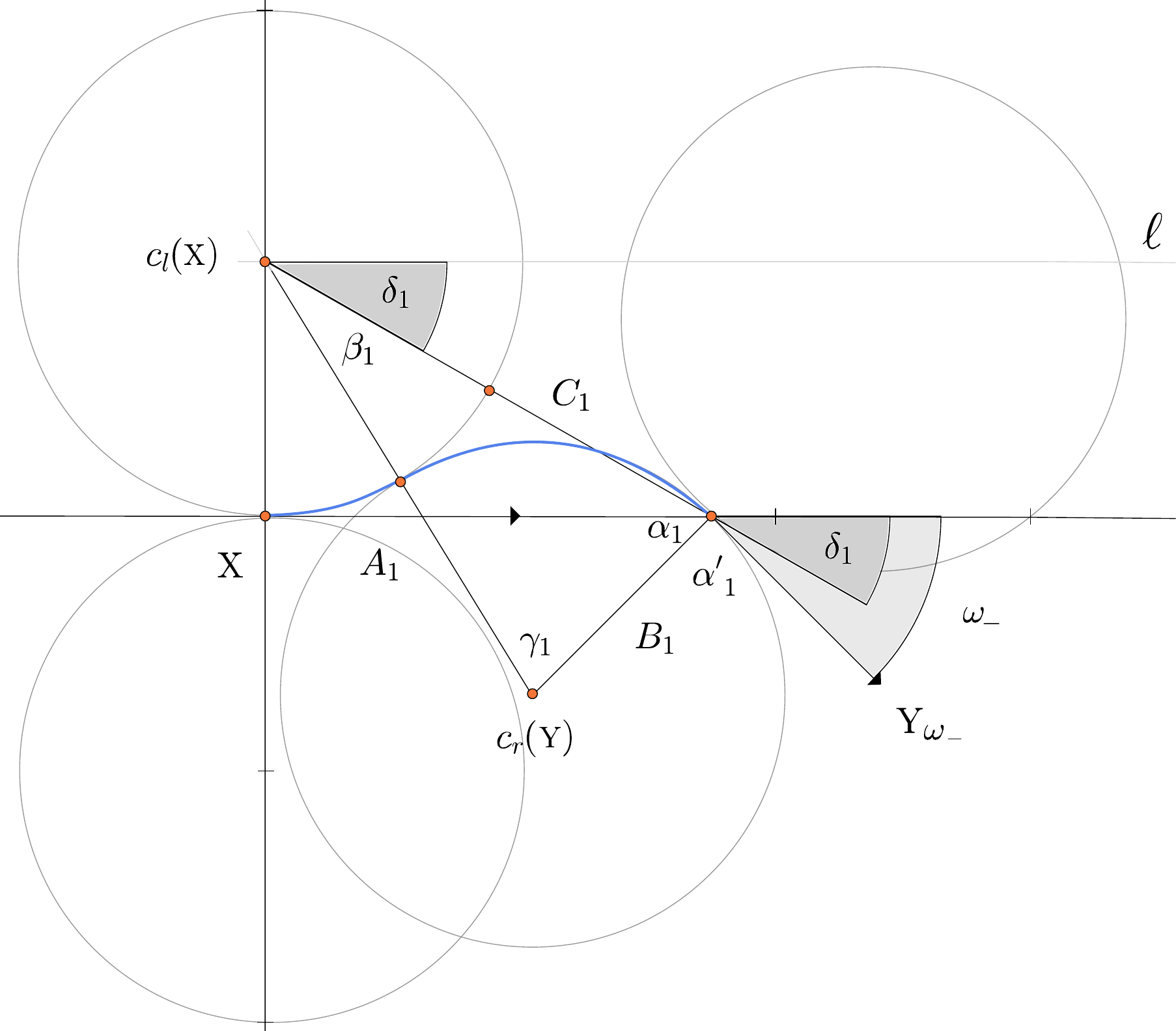}
 % \caption{}
  \label{fig:sfig1}
\end{subfigure}%
\begin{subfigure}{.5\textwidth}
  \centering
  \includegraphics[width=.87\linewidth]{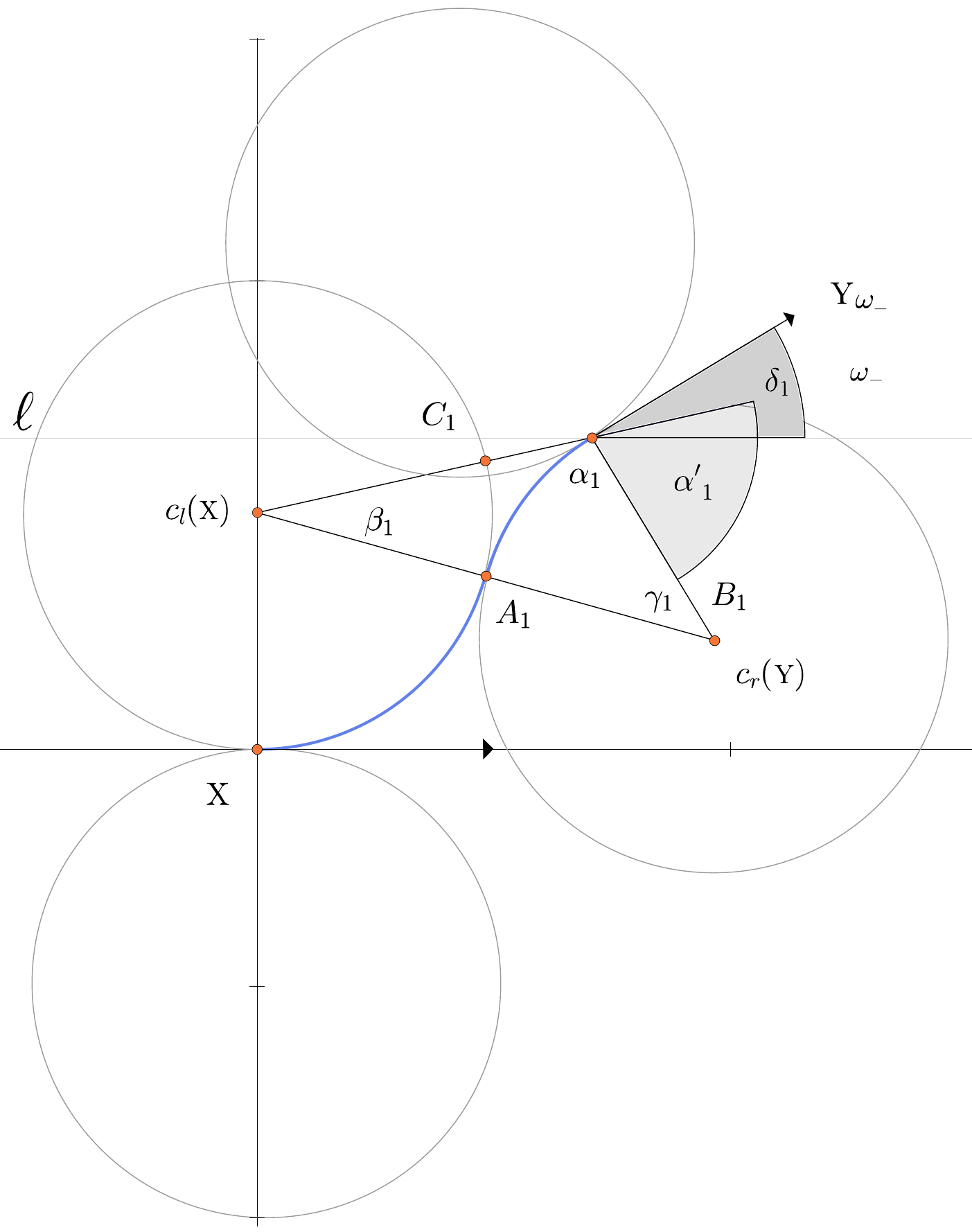}
%  \caption{}
  \label{fig:sfig2}
\end{subfigure}
\caption{Notation for short triangles. Right: The angular formulae applies when $\delta_1$ and $\omega_{-}$ (or $\omega_{+}$) have same and opposite sign. Here $\ell$ is a line parallel to the horizontal axis.}
\label{shortriang1B}
\end{figure}

Let $\delta_{1}$ be the smallest angle made by the line joining $c_l(\mbox{\sc x})$ to $y$ and the $u$-axis. The angle $\delta_1=\arctan\big(\frac{v+1}{u}\big)$ is easy to obtain since $y=(u,v)\in \mathbb R^2$ is given. 

\begin{remark}(Ruling out indeterminancies).\label{indet} Note that the standard arctan function allow us to compute angles in $(-\frac{\pi}{2}, \frac{\pi}{2})$. Since our computations involve angles in $(-\pi,\pi)$ we make use of the arctan2 function that allows us to calculate the arctangent in all four quadrants, see equations (\ref{eq:delta_a}) and (\ref{eq:delta_xa}).
\end{remark}

Depending on the final position, the angles $\delta_1$ and $\omega_-$ may have the same or different sign. In Fig. \ref{shortriang1B} left we illustrate the case when $\delta_1<0$ and $\omega_-<0$. In Fig. \ref{shortriang1B} right we illustrate the case where $\delta_1<0$ and $\omega_->0$.  Since $\alpha_{1}$ and $\alpha'_{1}$ are supplementary we have that $\alpha'_{1} = \pi - \alpha_{1}$. 

From the previous analysis we obtain that $\omega_- = \delta_{1}-\alpha'_{1}+\frac{\pi}{2}$ or equivalently:
\begin{equation} 
	\label{eq:thetar_b}	
	\omega_- = \delta_{1}+\alpha_{1}-\frac{\pi}{2}.
\end{equation} 

%----------------------------------------------------------------------------------------
%	FIGURE 9
%----------------------------------------------------------------------------------------
\begin{figure}[h]
	\centering
	\includegraphics[width=.57\textwidth,angle=0]{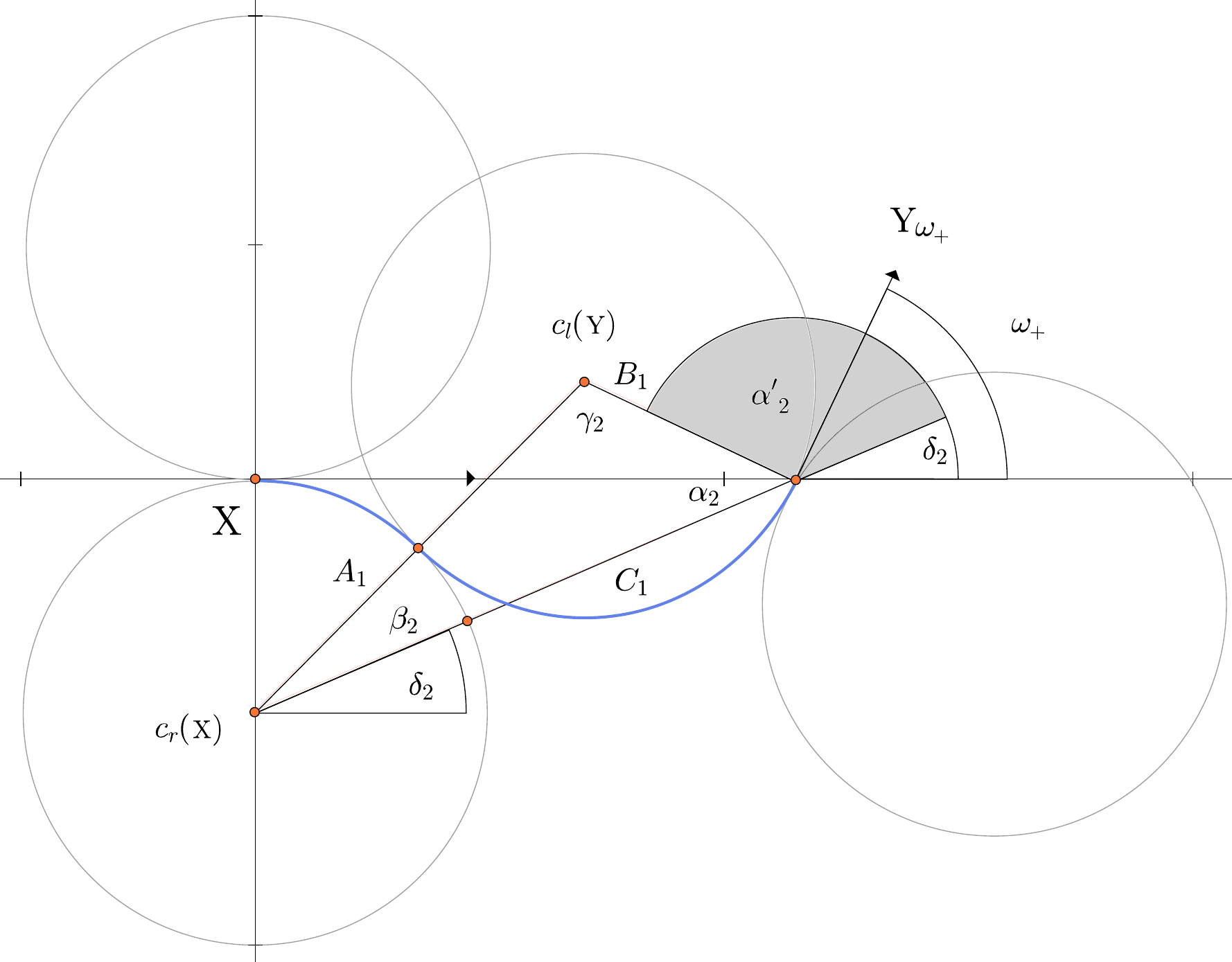}
	\caption{A critical configuration for short triangles. Note that $\delta_2>0$ and $\omega_+>0$.}
	\label{shortriang2}
\end{figure}

Now we obtain a formula for $\omega_+$, see Fig. \ref{shortriang2}. Since $\alpha_{2}$ and $\alpha'_{2}$ are supplementary we have that $\alpha'_{2} = \pi - \alpha_{2}$. 

We obtain that $\omega_+ = \delta_{2}+\alpha'_{2}-\frac{\pi}{2}$ or equivalently:

\begin{equation} 
	\label{eq:thetal_a}
	\omega_{+} = \delta_{2}-\alpha_{2}+\frac{\pi}{2}.
\end{equation} 

Here $\delta_{2}$ is the smaller angle made by the $u$-axis and the line joining $c_r(\mbox{\sc x})$ to $y$. It is easy to obtain the length of the side $C_2$.

%----------------------------------------------------------------------------------------
%	FIGURE 10
%----------------------------------------------------------------------------------------
\begin{figure}[h]
	\centering
	\includegraphics[width=.7\textwidth,angle=0]{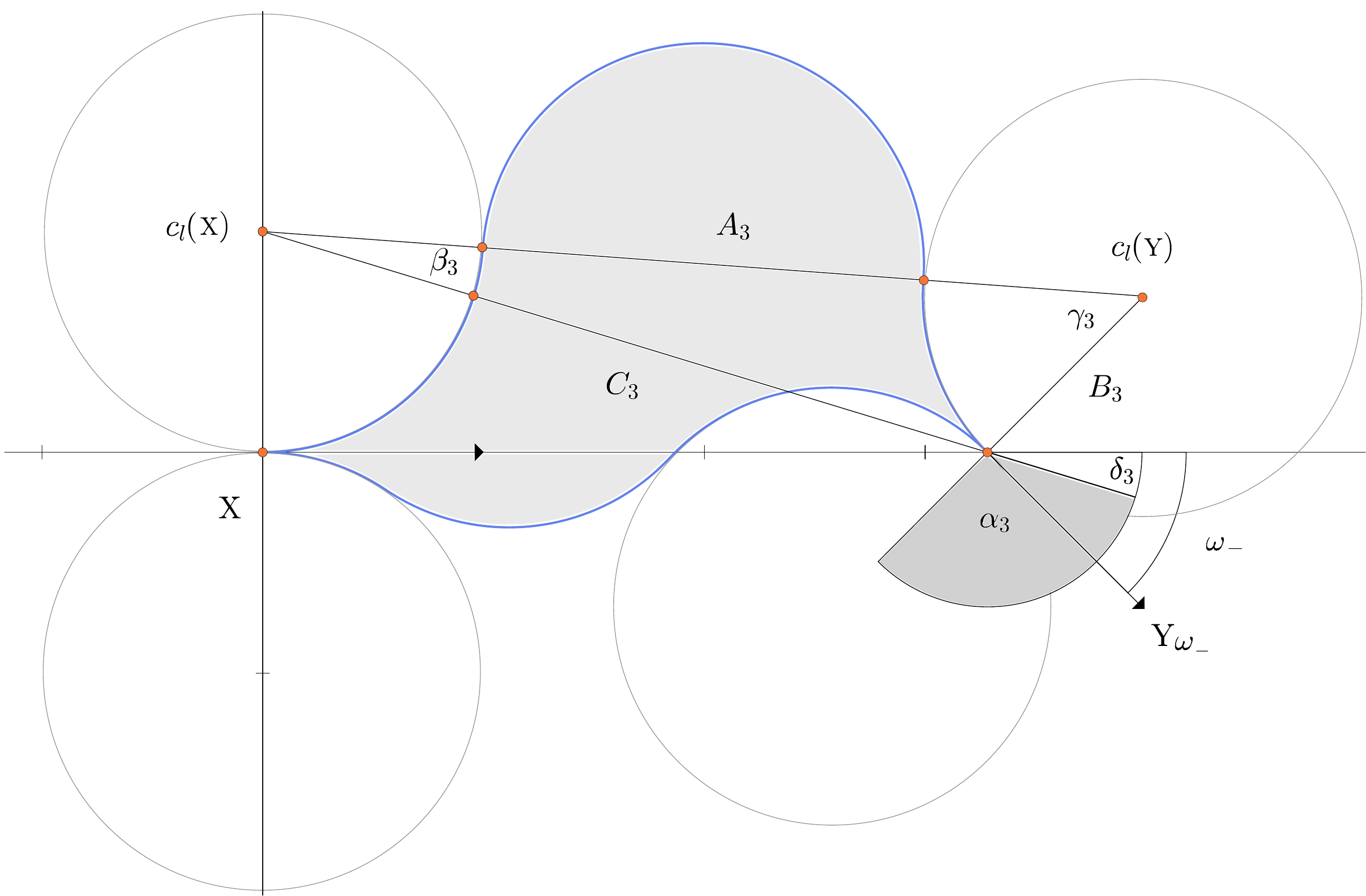}
	\caption{Notation for long triangles.}
	\label{fig:Triangle3}
\end{figure}

%----------------------------------------------------------------------------------------
%	ANGULAR FORMULAE FOR LONG TRIANGLES
%----------------------------------------------------------------------------------------
\subsection{Angular formulae for long triangles}
 Using the notation from Fig. \ref{fig:Triangle3} we present the following formulae:
\begin{equation} 
	\label{eq:thetar_c}	
	\omega_{-} = \delta_{3}-\alpha_{3}+\frac{\pi}{2}.
\end{equation} 
Similarly we obtain,
\begin{equation} 
	\label{eq:thetal_b}	
	\omega_{+} = \delta_{4}+\alpha_{4}-\frac{\pi}{2}.
\end{equation} 

%----------------------------------------------------------------------------------------
%	FIGURE 11
%----------------------------------------------------------------------------------------
\begin{figure}[h]
	\centering
	\includegraphics[width=.7\textwidth,angle=0]{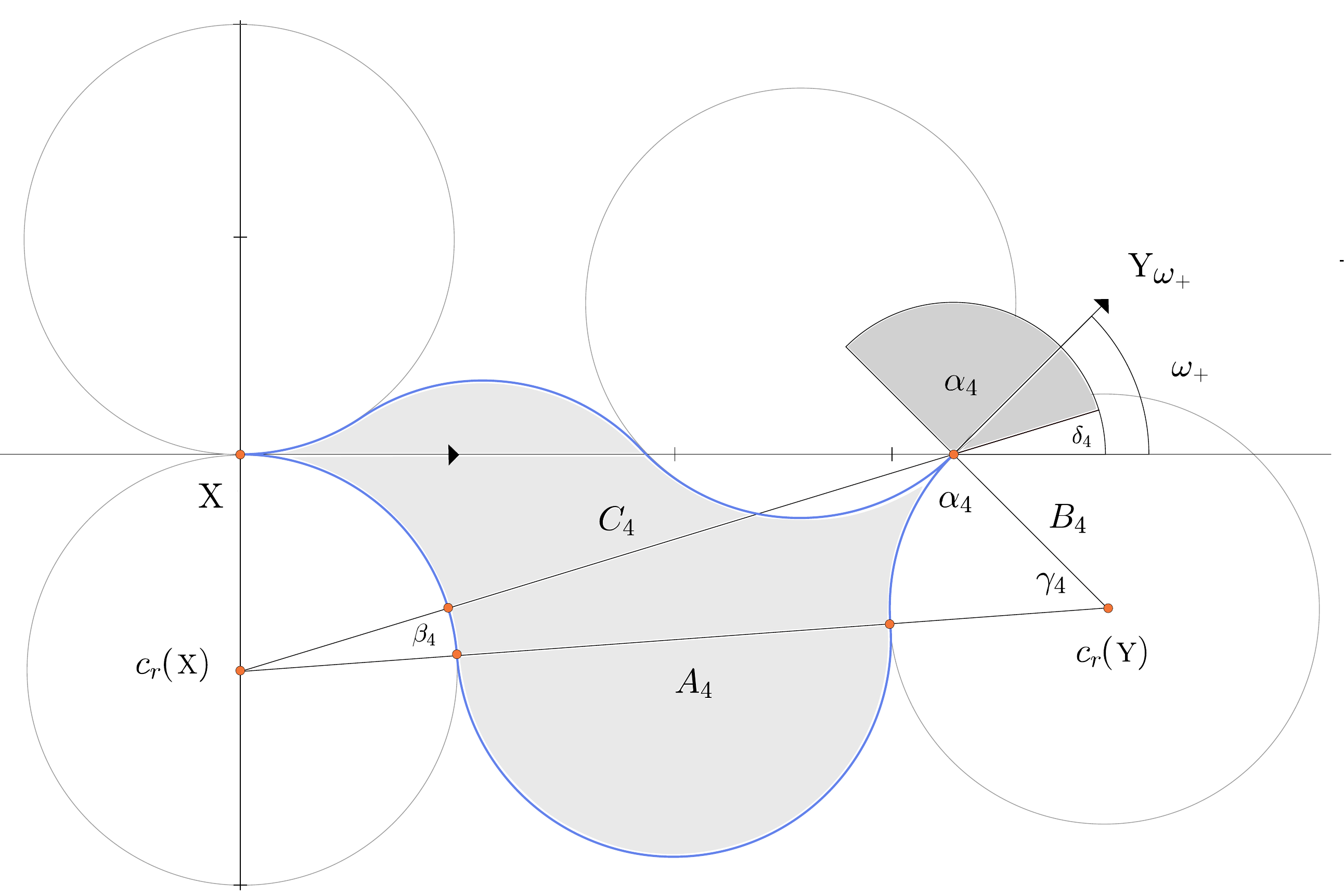}
	\caption{Notation for long triangles.}
	\label{fig:Triangle4}
\end{figure}

We put together equations (\ref{eq:thetar_b})-(\ref{eq:thetal_b}) to give explicit formulae for $\omega_-$ and $\omega_+$. 
\begin{equation} 
	\label{eq:wmin}
	\omega_{-} =
	\begin{cases}
		\delta_{1} + \alpha_{1} - \frac{\pi}{2}  & \text{if } \,\,\,d(c_l(\mbox{\sc x}),y) < 3 \\
		\delta_{3} - \alpha_{3} + \frac{\pi}{2}  & \text{if } \,\,\,d(c_l(\mbox{\sc x}),y) \geq 3
	\end{cases}
\end{equation} 
\begin{equation} 
	\label{eq:wmax}
	\omega_{+} =
	\begin{cases}
		\delta_{2} - \alpha_{2} + \frac{\pi}{2}  & \text{if } \,\,\,d(c_r(\mbox{\sc x}),y) < 3 \\
		\delta_{4} + \alpha_{4} - \frac{\pi}{2}  & \text{if } \,\,\,d(c_r(\mbox{\sc x}),y) \geq 3
	\end{cases}
\end{equation} 

%\vspace{-.2cm}
%We combine equations (\ref{eq:thetar_b})-(\ref{eq:thetal_b}) following our discussion on short and long triangles to obtain (\ref{eq:wmin}) and (\ref{eq:wmax}). Then we defined the class range function for each point $y\in \mathbb R^2$ according to their location. 
	
%----------------------------------------------------------------------------------------
%	THE CLASS RANGE 
%----------------------------------------------------------------------------------------
\section{The class range} \label{classrangesec}

Choose $(x,X), (y,Y) \in T{\mathbb R}^2$ so that the origin is identified with $x$ as in Remark \ref{coo}. We consider the formulae (\ref{eq:wmin}) and (\ref{eq:wmax}) as starting point for obtaining the class range function. The class value gives the range that $\theta$ can continuously vary so that the spaces $\Gamma(\mbox{\sc x}, \mbox{\sc y}_{\theta})$ have the same types of connected components. The class value corresponds to the length of the maximal subinterval $I(y)\subset (-\pi,\pi)$ so that there is a bounded isotopy class $\Delta(\mbox{\sc x}, \mbox{\sc y}_\theta)$, $\theta \in I(y)$. 

Next we express the angles $\delta_i$ and $\alpha_i$ in (\ref{eq:wmin}) and (\ref{eq:wmax}) in terms of generic $y=(u,v)\in \mathbb R^2$. Observe that $\delta_1=\delta_3$ since both are the acute angles made by the line joining $c_l(\mbox{x})$ and $y$ with the $u$-axis. In addition, $\delta_2=\delta_4$ since both are the acute angles made by the line joining $c_r(\mbox{x})$ and $y$ with the $u$-axis. Note that $\tan (\delta_1)=\frac{v+r}{u}$, and $\tan (\delta_2)=\frac{v-r}{u}$. 

We use arctan2 function in (\ref{eq:delta_a}) and (\ref{eq:delta_xa}) instead of the standard arctan function to determine angles $\delta_1$ and $\delta_2$, see Remark \ref{indet}. We obtain the following formulae:

\begin{equation} 
	\label{eq:delta_a}
\delta_{1}(u,v) =
		\begin{cases}
			\arctan \frac{v+1}{u}  & \text{if } u > 0 \\
			\arctan \frac{v+1}{u} + \pi  & \text{if } u < 0 \,\, \text{and}\,\, v \geq -1 \\
			\arctan \frac{v+1}{u}  - \pi & \text{if } u > 0 \,\, \text{and}\,\, v<-1\\
		\frac{\pi}{2}  & \text{if } u = 0  \,\, \text{and}\,\, v>-1\\
			-\frac{\pi}{2}  & \text{if } u = 0  \,\, \text{and}\,\, v<-1\\
		\end{cases}
		\end{equation} 
\begin{equation} 
	\label{eq:delta_xa}
\delta_{2}(u,v) = 
		\begin{cases}
			\arctan \frac{v-1}{u}  & \text{if } u > 0 \\
			\arctan \frac{v-1}{u} + \pi  & \text{if } u < 0 \,\, \text{and}\,\, v \geq 1 \\
			\arctan \frac{v-1}{u}  - \pi & \text{if } u > 0 \,\, \text{and}\,\, v<1\\
			 \frac{\pi}{2}  & \text{if } u = 0  \,\, \text{and}\,\, v>1\\
			-\frac{\pi}{2}  & \text{if } u = 0  \,\, \text{and}\,\, v<1\\
		\end{cases}
\end{equation} 

Now we determine the angles $\alpha_{i}$ in  (\ref{eq:wmin}) and (\ref{eq:wmax}). To this end, we apply the law of cosines. Here we are not considering degenerate triangles, so the following formulae are never undetermined. 
\begin{equation*} 
	\alpha_{i} = \arccos\bigg(\frac{b_i^2+c_i^2-a_i^2}{2b_ic_i}\bigg)
\end{equation*} 

 Since the initial and final positions are given, the coordinates of the adjacent circles are easily obtained. In consequence, the length of the sides of the short and long triangles are easily obtained. 
 
 We can express the angles $\alpha_{i}$ as a function of $y=(u,v)\in \mathbb R^2$. That is:
\begin{equation}
	\label{eq:alpha_1} 
	\alpha_{1}(u,v)  = \arccos\bigg(\frac{u^2+(-1-v)^2-3}{2\sqrt{u^2+(-1-v)^2}}\bigg)
\end{equation} 
\vspace{0em}
\begin{equation} 
	\label{eq:alpha_2} 
	\alpha_{2}(u,v)  = \arccos\bigg(\frac{u^2+(1-v)^2-3}{2\sqrt{u^2+(1-v)^2}}\bigg)
\end{equation} 
\vspace{0em}
\begin{equation} 
	\label{eq:alpha_3} 
	\alpha_{3}(u,v)  = \arccos\bigg(\frac{u^2+(-1-v)^2-15}{2\sqrt{u^2+(-1-v)^2}}\bigg)
\end{equation} 
\vspace{0em}
\begin{equation} 
	\label{eq:alpha_4} 
	\alpha_{4}(u,v)  = \arccos\bigg(\frac{u^2+(1-v)^2-15}{2\sqrt{u^2+(1-v)^2}}\bigg)
\end{equation} 

Note that we have expressed all the angles $\alpha_i$ and $\delta_i$ as functions of the variables $u$ and $v$. In addition, recall that in Definition \ref{defw} we considered the concept of critical angles $\omega_-$ and $\omega_+$. We abuse notation and define the functions $\omega_-:\mathbb R^2\to\mathbb R$ and $\omega_+:\mathbb R^2\to\mathbb R$. They have been constructed to match Definition \ref{defw}. These functions assign to each final position $y=(u,v)\in \mathbb R^2$ its respective critical angle $\omega_-(y)$ and $\omega_-(y)$. 

We consider equations (\ref{eq:delta_a})-(\ref{eq:alpha_4}) according to equations (\ref{eq:wmin}) and (\ref{eq:wmax}) to obtain:

\begin{equation}
	\label{eq:Wmin} 
	\omega_-(u,v) =
	\begin{cases}
		\begin{cases}
				\arctan (\frac{v-1}{u}) + \arccos(\frac{(u^2+(1-v)^2)-3}{2\sqrt{u^2+(1-v)^2}}) - \frac{\pi}{2}  & \text{if } u > 0 \\
				\begin{cases}
					\arctan (\frac{v-1}{u}) + \pi + \arccos(\frac{(u^2+(1-v)^2)-3}{2\sqrt{u^2+(1-v)^2}}) - \frac{\pi}{2}  & \text{if } v \geq 1 \\
					\arctan (\frac{v-1}{u}) - \pi + \arccos(\frac{(u^2+(1-v)^2)-3}{2\sqrt{u^2+(1-v)^2}}) - \frac{\pi}{2}  & \text{if } v < 1
				\end{cases} & \text{if } u < 0 \\
				\begin{cases}
					\frac{\pi}{2} + \arccos(\frac{(u^2+(1-v)^2)-3}{2\sqrt{u^2+(1-v)^2}}) - \frac{\pi}{2}  & \text{if } v>1\\
					-\frac{\pi}{2} + \arccos(\frac{(u^2+(1-v)^2)-3}{2\sqrt{u^2+(1-v)^2}}) - \frac{\pi}{2}  & \text{if } v<1\\
				\end{cases} & \text{if } u = 0 \\
			\end{cases} & \text{if } d(\mbox{\it c}_l(\mbox{\sc x}), y) < 3 \\
		\begin{cases}
				\arctan (\frac{v-1}{u}) - \arccos(\frac{(u^2+(1-v)^2)-15}{2\sqrt{u^2+(1-v)^2}}) + \frac{\pi}{2} & \text{if } u > 0 \\
				\begin{cases}
					\arctan (\frac{v-1}{u}) + \pi - \arccos(\frac{(u^2+(1-v)^2)-15}{2\sqrt{u^2+(1-v)^2}}) + \frac{\pi}{2}  & \text{if } v \geq 1 \\
					\arctan (\frac{v-1}{u}) - \pi - \arccos(\frac{(u^2+(1-v)^2)-15}{2\sqrt{u^2+(1-v)^2}}) + \frac{\pi}{2}  & \text{if } v < 1
				\end{cases} & \text{if } u < 0 \\
				\begin{cases}
					\frac{\pi}{2} - \arccos(\frac{(u^2+(1-v)^2)-15}{2\sqrt{u^2+(1-v)^2}}) + \frac{\pi}{2}   & \text{if } v>1\\
					-\frac{\pi}{2} - \arccos(\frac{(u^2+(1-v)^2)-15}{2\sqrt{u^2+(1-v)^2}}) + \frac{\pi}{2}   & \text{if } v<1\\
				\end{cases} & \text{if } u = 0 \\
			\end{cases} & \text{if } d(\mbox{\it c}_l(\mbox{\sc x}), y) \geq 3
	\end{cases} 
\end{equation} 
\vspace{0em}

\begin{equation} 
	\label{eq:Wmax} 
	\omega_+(u,v) =
	\begin{cases}
		\begin{cases}
				\arctan (\frac{v+1}{u}) - \arccos(\frac{(u^2+(-1-v)^2)-3}{2\sqrt{u^2+(-1-v)^2}}) + \frac{\pi}{2} & \text{if } u > 0 \\
				\begin{cases}
					\arctan (\frac{v+1}{u}) + \pi - \arccos(\frac{(u^2+(-1-v)^2)-3}{2\sqrt{u^2+(-1-v)^2}}) + \frac{\pi}{2} & \text{if } v \geq -1 \\
					\arctan (\frac{v+1}{u}) - \pi - \arccos(\frac{(u^2+(-1-v)^2)-3}{2\sqrt{u^2+(-1-v)^2}}) + \frac{\pi}{2} & \text{if } v < -1
				\end{cases} & \text{if } u < 0\\
				\begin{cases}
					\frac{\pi}{2}   - \arccos(\frac{(u^2+(-1-v)^2)-3}{2\sqrt{u^2+(-1-v)^2}}) + \frac{\pi}{2}  & \text{if } v>-1\\
					-\frac{\pi}{2}  - \arccos(\frac{(u^2+(-1-v)^2)-3}{2\sqrt{u^2+(-1-v)^2}}) + \frac{\pi}{2}  & \text{if } v<-1\\
				\end{cases} & \text{if } u = 0 \\
			\end{cases}  & \text{if } d(\mbox{\it c}_r(\mbox{\sc x}), y) < 3 \\
		\begin{cases}
				\arctan (\frac{v+1}{u}) + \arccos(\frac{(u^2+(-1-v)^2)-15}{2\sqrt{u^2+(-1-v)^2}}) - \frac{\pi}{2}  & \text{if } u > 0 \\
				\begin{cases}
					\arctan (\frac{v+1}{u}) + \pi + \arccos(\frac{(u^2+(-1-v)^2)-15}{2\sqrt{u^2+(-1-v)^2}}) - \frac{\pi}{2}  & \text{if } v \geq -1\\
					\arctan (\frac{v+1}{u}) - \pi + \arccos(\frac{(u^2+(-1-v)^2)-15}{2\sqrt{u^2+(-1-v)^2}}) - \frac{\pi}{2}  & \text{if } v < -1
				\end{cases} & \text{if } u < 0\\
				\begin{cases}
					\frac{\pi}{2}  - \arccos(\frac{(u^2+(-1-v)^2)-15}{2\sqrt{u^2+(-1-v)^2}}) + \frac{\pi}{2}   & \text{if } v>-1\\
					-\frac{\pi}{2}  - \arccos(\frac{(u^2+(-1-v)^2)-15}{2\sqrt{u^2+(-1-v)^2}}) + \frac{\pi}{2}  & \text{if } v<-1\\
				\end{cases} & \text{if } u = 0 \\
			\end{cases} & \text{if } d(\mbox{\it c}_r(\mbox{\sc x}), y) \geq 3
	\end{cases}
\end{equation} 

The undetermined expression $\frac{u}{v}$ for $u=v=0$ can only happen at the centres of $C_l(\mbox{\sc x})$ and $C_r(\mbox{\sc x})$. By Corollary 3.4 in \cite{papere}, no path in a bounded isotopy class can satisfy the points $(0,1)$ or $(0,-1)$. This is due the existence of parallel tangents; contradicting Theorem 7.12 in \cite{paperc}. 

\begin{definition} \label{classrange+} The class range function $\Theta: \mathbb R^2 \to \mathbb R$ is defined to be: 
\label{eq:angularrange_1}	
	$$\Theta(y)= \omega_+(y) - \omega_-(y)\geq 0.$$
\end{definition}

It is important to note that the critical angles $\omega_-(y)$ and $\omega_+(y)$ are chosen so that $\omega_+(y)\geq\omega_-(y)$, see Definition \ref{defw}. In addition, note that $\omega_-(y)$ and $\omega_+(y)$ are in the boundary or the closure of the interval $I(y)$. Of course, if $\omega_+(y) - \omega_-(y)<0$ we have that $I(y)=\emptyset$, and so $\Omega=\emptyset$, and so there is no bounded isotopy class $\Delta(\mbox{\sc x,y})$. The case where $\Theta(y)=0$ is discussed bellow. 

The interior, closure, boundary and complement of a set $B$ are denoted by $int(B)$, $cl(B)$, $\partial(B)$, and $B^c$ respectively. 

Next we present data obtained after plotting the values of $\omega_+(y) - \omega_-(y)$. 

\subsection{Facts about the class range function}\label{rem:data} (see Fig. \ref{figraf}).

\begin{itemize} \label{facts}
\item $\Theta$ is continuous.
\item The domain of $\Theta$ is a bounded set $B\subset \mathbb R^2$. In Section \ref{classdomain} we determine $B$ and its subdivisions. In these subdivisions lie the final positions $y\in \mathbb R^2$ so that $\Gamma(\mbox{\sc x},\mbox{\sc y}_\theta)$ are fibers of the same type, $\mbox{\sc y}_\theta=(y,Y_\theta)$, see Definition \ref{def:spaces}.
\item If  $y\in int(B)$, then $\Theta(y)>0$. 
\item If $y \in \partial (cl(B))$, then $\Theta(y)=0$.
\item If $y\in B^c$, then $\omega_+(y) - \omega_-(y)<0$. In this case, $I(y)=\emptyset$, and so $\Omega=\emptyset$ or equivalently there is no bounded isotopy class $\Delta(\mbox{\sc x,y})$.
\item  The range of $\Theta$ is the interval $[0, \arctan \big(\frac{1}{4}\sqrt{2}\big)+\pi]$.
\item $\Theta$ attains the minima at the final positions $y\in \mathbb R^2$ for  {\sc c}  (or {\sc cc}) isolated points. Here we have that $\Theta(y)=0$.
\item $\Theta$ attains a maximum at $y=(0,2\sqrt{2})$ with $\Theta(y)= \arctan \big(\frac{1}{4}\sqrt{2}\big)+\pi$. In Fig. \ref{regparam} top we illustrate the class range for $y=(0,2\sqrt{2})$.
\end{itemize}

%----------------------------------------------------------------------------------------
%	FIGURE 12
%----------------------------------------------------------------------------------------
\begin{figure}[h]
	\centering
	\includegraphics[width=1\textwidth,angle=0]{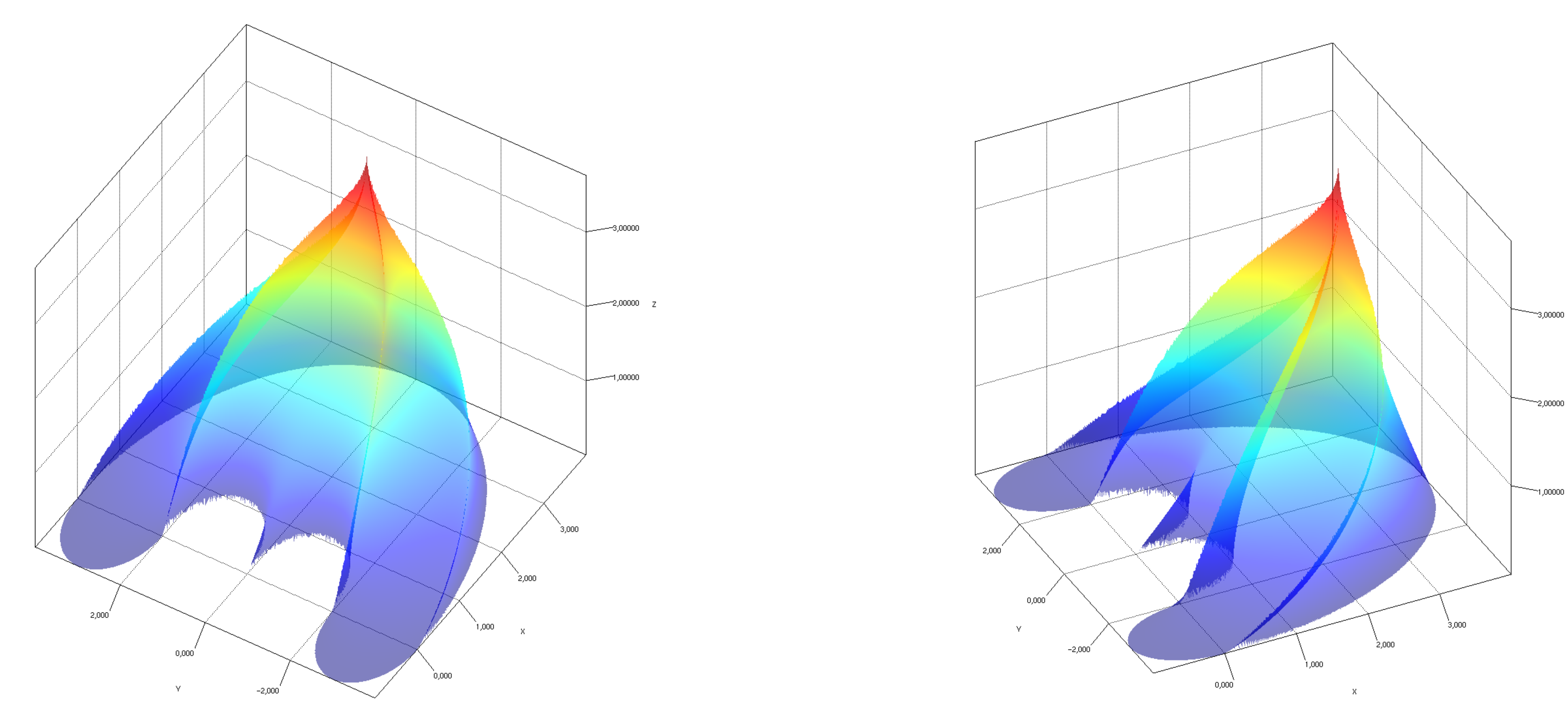}
	\caption{The graph of the class range function, see Definition \ref{classrange+}. Note that the class range is constructed out of (\ref{eq:Wmin}) and (\ref{eq:Wmax}) and that these functions are obtained by combining (\ref{eq:wmin}) and (\ref{eq:wmax}).}
  \label{figraf}
\end{figure}

%----------------------------------------------------------------------------------------
%	FIGURE 12
%----------------------------------------------------------------------------------------
%\begin{figure}
%\begin{subfigure}{.5\textwidth}
%  \centering
%  \includegraphics[width=1\linewidth]{image1}
%% \caption{}
%  \label{figraf}
%\end{subfigure}%
%\begin{subfigure}{.5\textwidth}
%  \centering
%  \includegraphics[width=1\linewidth]{image2}
%%  \caption{}
%%  \label{fig:sfig1}
%\end{subfigure}
%\caption{The graph of the class range function, see Definition \ref{classrange+}. Note that the class range is constructed out of (\ref{eq:Wmin}) and (\ref{eq:Wmax}) and that these functions are obtained by combining (\ref{eq:wmin}) and (\ref{eq:wmax}).}
%  \label{figraf}
%\end{figure}

%----------------------------------------------------------------------------------------
%	CLASS DOMAIN
%----------------------------------------------------------------------------------------
\section{Class domain}\label{classdomain}

The obvious observation that a triangle with sides of length $1$ and $2$ cannot have a third side of length greater to $3$ leads us to study the cases: 
\begin{equation}\label{ineq:1} d(c_l(\mbox{\sc x}),y) < 3
\end{equation}
\begin{equation}\label{ineq:2} d(c_l(\mbox{\sc x}),y) \geq 3
\end{equation}
\begin{equation}\label{ineq:3} d(c_r(\mbox{\sc x}),y) < 3
\end{equation}
\begin{equation}\label{ineq:4} d(c_r(\mbox{\sc x}),y) \geq 3
\end{equation}
 
After looking at the four possible combinations for: 
 \begin{equation} \label{eq:0}
 \omega_-=\omega_+
\end{equation}
in equations (\ref{eq:wmin}) and (\ref{eq:wmax}) we obtain:
  \begin{equation} \label{eq:1}
	\delta_{1} + \alpha_{1} - \frac{\pi}{2} = \delta_{2} - \alpha_{2} + \frac{\pi}{2}
\end{equation}
\vspace{-.7em}
\begin{equation} \label{eq:2}
	\delta_{3} - \alpha_{3} + \frac{\pi}{2} = \delta_{2} - \alpha_{2} + \frac{\pi}{2}
\end{equation}
\vspace{-.7em}
\begin{equation}\label{eq:3}
	\delta_{1} + \alpha_{1} - \frac{\pi}{2} = \delta_{4} + \alpha_{4} - \frac{\pi}{2}
\end{equation} 
\begin{equation}\label{eq:4}
	\delta_{3} - \alpha_{3} + \frac{\pi}{2} = \delta_{4} + \alpha_{4} - \frac{\pi}{2}
\end{equation} 

The following observations regarding the circles (\ref{eq:circ_a})-(\ref{eq:circ_g}) can be checked by a mere evaluation. We leave the details to the reader. 

%Recall that $y$ is a vertex for short and long triangles. 

Consider $\mbox{\sc x}\in T\mathbb R^2$ according to Remark \ref{coo}. The locus of the circle (\ref{eq:circ_a}) is satisfied by the final positions $y=(u,v)\in \mathbb R^2$ with $u\geq0$. In this case, the angles in the associated triangles (according to Section \ref{trans}) satisfy (\ref{eq:4}), see Figs. \ref{fig:Triangle3}-\ref{fig:Triangle4}. In addition, $\Theta(u,v)=0$ for points in (\ref{eq:circ_a}) for $u\geq0$. 
%It is not hard to see that no region $\Omega$ can be associated with these final positions.
%For each of the points in (\ref{eq:circ_a}) with $u\geq0$ we can consider a final direction so that
\begin{equation} 
	\label{eq:circ_a}
		u^2+v^2=16.
\end{equation}  
The loci of the circles (\ref{eq:circ_d}) and (\ref{eq:circ_e}) for $u\geq 0$ are satisfied by the final positions $y=(u,v)\in \mathbb R^2$ so that its associated angles according to subsection \ref{short} satisfy (\ref{eq:1}). In addition, $\Theta(y)=0$ for points in (\ref{eq:circ_d}) and (\ref{eq:circ_e}) with $u\geq0$. 
\begin{equation}
	\label{eq:circ_d}
		u^2+(v-1)^2=1
\end{equation}
\begin{equation}
	\label{eq:circ_e}
		u^2+(v+1)^2=1
\end{equation}

The locus of the circle (\ref{eq:circ_f}) is satisfied by the final positions $y=(u,v)\in \mathbb R^2$ with for $u\leq 0$ so that its associated angles according to subsections \ref{short} and \ref{long} satisfy (\ref{eq:2}).

 The locus of the circle (\ref{eq:circ_g}) is satisfied by the final positions $y=(u,v)\in \mathbb R^2$ with for $u\leq 0$ so that its associated angles according to subsections \ref{short} and \ref{long} satisfy (\ref{eq:3}). In addition, $\Theta(y)=0$ for points in (\ref{eq:circ_f}) and (\ref{eq:circ_g}) with $u\leq0$. 
\begin{equation} 
	\label{eq:circ_f}
		u^2+(v-3)^2=1
\end{equation} 
\begin{equation} 
	\label{eq:circ_g}
		u^2+(v+3)^2=1
\end{equation} 
The circles (\ref{eq:circ_b}) and (\ref{eq:circ_c}) are trivially extracted out of relations (\ref{ineq:1})-(\ref{ineq:4})
\begin{equation} 
	\label{eq:circ_b}
		u^2+(v-1)^2=9
\end{equation} 
\vspace{-2em}
\begin{equation} 
	\label{eq:circ_c}
		u^2+(v+1)^2=9
\end{equation} 

%----------------------------------------------------------------------------------------
%	FIGURE 13
%----------------------------------------------------------------------------------------
\begin{figure}[h]
	\centering
	\includegraphics[width=1\textwidth,angle=0]{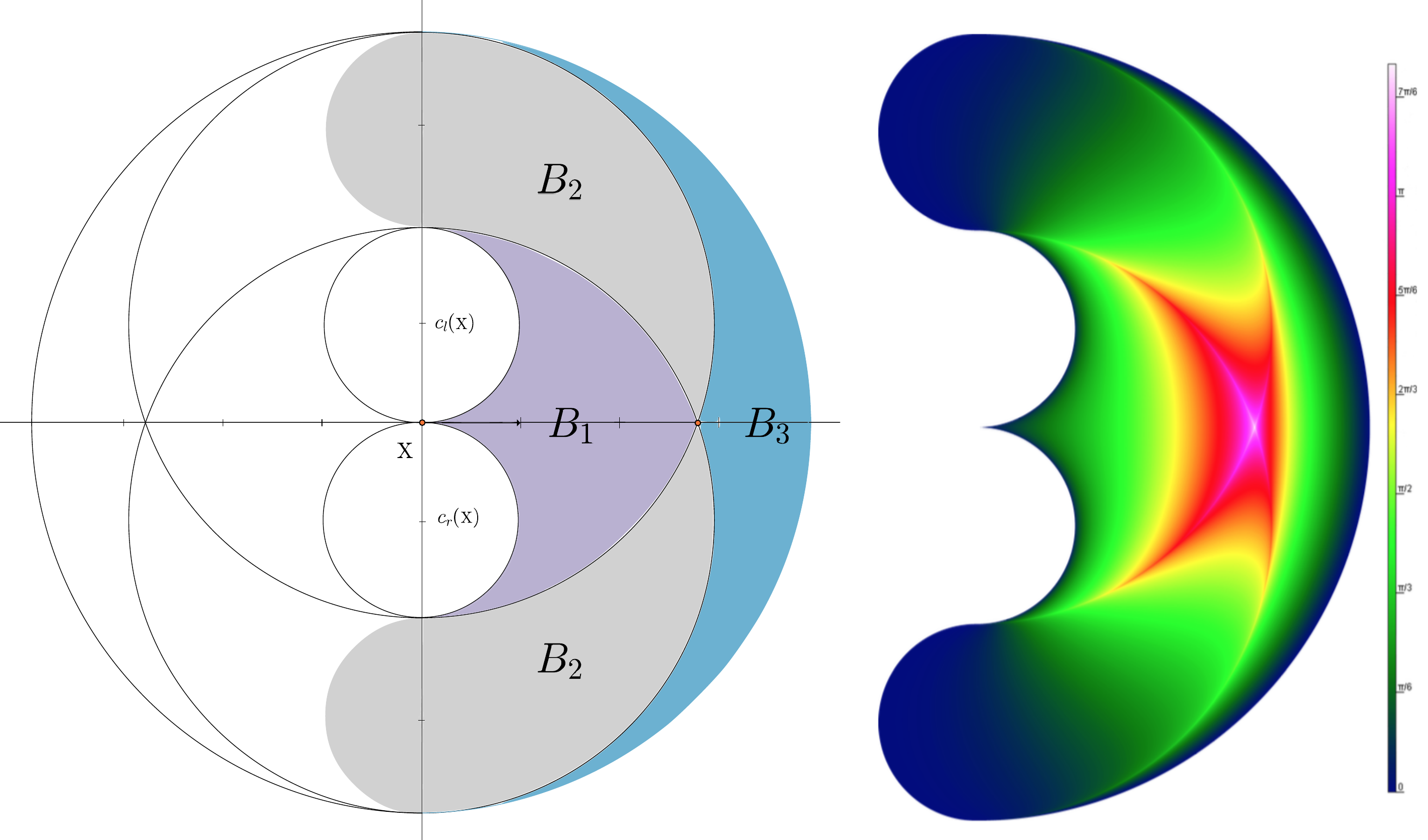}
	\caption{Left: The domain $B\subset \mathbb R^2$ of $\Theta$. Note that $B$ is bounded by the circles (\ref{eq:circ_a})-(\ref{eq:circ_c}). Right: The temperature give the length of $I(y)$ for $y\in B$. }
	\label{figRegionB}
\end{figure}
%The darkest blue show range class function values close to 0.}

%----------------------------------------------------------------------------------------
%	DESCRIPTION OF B
%----------------------------------------------------------------------------------------
\subsection{Description of $B\subset \mathbb R^2$}\label{descB}
We obtain the domain of the class range function by evaluating (\ref{eq:Wmin}) and (\ref{eq:Wmax}) according to Definition \ref{classrange+}, this planar set is represented by the colored portion in Fig. \ref{figRegionB} left. In Fig. \ref{figRegionB} right we show a Heatmap for the class values.

\begin{definition}The domain $B\subset \mathbb R^2$ of the class range function $\Theta:\mathbb R^2\to\mathbb R$ corresponds to the open bounded portion enclosed by the simple closed curve corresponding to the union of the semicircles (\ref{eq:circ_d}) and (\ref{eq:circ_e}) for $u\geq0$; (\ref{eq:circ_f}) and (\ref{eq:circ_g}) for $u\leq0$; and (\ref{eq:circ_a}) for $u\geq0$, union the semicircles (\ref{eq:circ_d}) and (\ref{eq:circ_e}) for $u>0$, union the origin.
\end{definition}

Observe that the semicircles (\ref{eq:circ_d}) and (\ref{eq:circ_e}) for $u>0$ are the location where {\sc c} isolated points are defined. In addition, $\Theta$ is continuous but not differentiable at $B$ intersection the circles (\ref{eq:circ_f}) and (\ref{eq:circ_g}), see the list of facts in \ref{facts}. 

\begin{definition} \label{cbc} Let $\mbox{\sc x,y}\in T\mathbb R^2$. Then

\begin{itemize}
		\item $y\in B_{1}\subset{B}$ then, $d(\mbox{\sc c}_l(\mbox{\sc x}), y) < 3$ and $d(\mbox{\sc c}_r(\mbox{\sc x}), y) < 3$ are satisfied.
		\item $y\in B_{2}\subset{B}$ then, $d(\mbox{\sc c}_l(\mbox{\sc x}), y) < 3$ and $d(\mbox{\sc c}_r(\mbox{\sc x}), y) \geq 3$ or, \\
		$d(\mbox{\sc c}_l(\mbox{\sc x}), y) \geq 3$ and $d(\mbox{\sc c}_r(\mbox{\sc x}), y) < 3$ are satisfied.
		\item $y\in B_{3}\subset{B}$ then, $d(\mbox{\sc c}_l(\mbox{\sc x}), y) \geq 3$ and $d(\mbox{\sc c}_r(\mbox{\sc x}), y) \geq 3$ are satisfied.
		\item set $B_4=B^c$.
	\end{itemize}

\end{definition}
 
%----------------------------------------------------------------------------------------
%	MAIN EXISTENCE 1-PARAMETER
%----------------------------------------------------------------------------------------
\begin{theorem}\label{existvect} Given $\mbox{\sc x} \in T\mathbb R^2$ and $y\in B$. There exists a family  $e^{i\theta}=Y_\theta\in T_y\mathbb R^2$, $\theta\in (-\pi,\pi)$, such that for  $\mbox{\sc x}, \mbox{\sc y}_\theta \in T\mathbb R^2$ we have that $\Delta(\mbox{\sc x}, \mbox{\sc y}_\theta)$ is a one-parameter family of bounded isotopy classes. 
\end{theorem}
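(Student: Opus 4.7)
The plan is to take the candidate family to be $Y_\theta = e^{i\theta}$ for $\theta$ in the interval $I(y) = [\omega_-(y),\omega_+(y)]$ furnished by Definition \ref{defw}, and to show that the assignment $\theta \mapsto \Delta(\mbox{\sc x}, \mbox{\sc y}_\theta)$ yields a one-parameter family of bounded isotopy classes over (the interior of) this interval. Since $y\in B$, the formulae (\ref{eq:Wmin}) and (\ref{eq:Wmax}) produce finite critical angles $\omega_-(y)\leq \omega_+(y)$ in $(-\pi,\pi)$; when $y\in \mathrm{int}(B)$ the facts in \ref{rem:data} give $\Theta(y)>0$, so $I(y)$ is non-degenerate.

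The first step is to verify that for every $\theta\in\mathrm{int}(I(y))$, the pair $\mbox{\sc x}, \mbox{\sc y}_\theta$ satisfies proximity condition D of Definition \ref{procon}, i.e.\ the strict inequalities $d(c_l(\mbox{\sc x}),c_l(\mbox{\sc y}_\theta))<4$ and $d(c_r(\mbox{\sc x}),c_r(\mbox{\sc y}_\theta))<4$ together with the existence of a bounded isotopy class. The angular formulae of Section \ref{trans} were designed so that exactly one of the two transitions catalogued in Section \ref{crit} (a \textsc{cc} isolated point via a short-triangle degeneration, or the condition $d(c_l(\mbox{\sc x}),c_l(\mbox{\sc y}_\theta))=4$ via a long-triangle degeneration) occurs at the endpoints $\theta=\omega_\pm(y)$ and nowhere in the open interval. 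Hence condition (iv) holds strictly throughout $\mathrm{int}(I(y))$.

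Once condition D is in hand, Theorem 5.4 in \cite{paperd} supplies, for each such $\theta$, a non-empty bounded isotopy class $\Delta(\mbox{\sc x}, \mbox{\sc y}_\theta)$ supported on a trapped region $\Omega_\theta\subset \mathbb R^2$ whose boundary is the union of the \textsc{rlr} path $w_1(\theta)$ and the \textsc{lrl} path $w_2(\theta)$ of Definition \ref{boundomega}. The closed-form expressions (\ref{eq:i1})--(\ref{eq:i4}) show that $\Omega_\theta$ varies continuously in $\theta$ in the Hausdorff topology, because the coordinates of $c_l(\mbox{\sc y}_\theta)$ and $c_r(\mbox{\sc y}_\theta)$ are manifestly continuous functions of $\theta$.

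The main obstacle I anticipate is upgrading this pointwise existence to a genuine continuous one-parameter family: given $\theta_0\in\mathrm{int}(I(y))$ and a representative $\gamma_0\in \Delta(\mbox{\sc x}, \mbox{\sc y}_{\theta_0})$, one must produce a continuous family $\gamma_\theta\in \Delta(\mbox{\sc x}, \mbox{\sc y}_\theta)$ for $\theta$ near $\theta_0$. My plan is to track the continuous family of boundary curves $w_1(\theta), w_2(\theta)$ and, via the trapping property (Theorem 8.1 in \cite{paperc}) together with the deformation tools of \cite{paperd}, produce a canonical small $C^1$ perturbation of $\gamma_0$ that stays inside the nearby trapped region $\Omega_\theta$ and lands in $\Delta(\mbox{\sc x}, \mbox{\sc y}_\theta)$.
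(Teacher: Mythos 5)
Your proposal follows essentially the same route as the paper: both arguments hinge on the fact that $y\in B$ gives $\Theta(y)=\omega_+(y)-\omega_-(y)\geq 0$, and that when $\Theta(y)>0$ the non-degenerate interval $I(y)$ between the critical angles parametrizes the family of bounded isotopy classes. The paper's own proof is far terser --- it merely observes that $\Theta(y)>0$ forces $\omega_-\neq\omega_+$, hence $\Delta(\mbox{\sc x},\mbox{\sc y}_{\omega_-})\neq\Delta(\mbox{\sc x},\mbox{\sc y}_{\omega_+})$, and then appeals to the intermediate value theorem --- so your explicit verification of proximity condition {\sc D} on $\mathrm{int}(I(y))$ and the appeal to Theorem 5.4 of \cite{paperd} make precise the steps the paper leaves implicit, and your closing paragraph on continuity of the family $\theta\mapsto\Delta(\mbox{\sc x},\mbox{\sc y}_\theta)$ addresses a point the paper does not raise at all (its notion of ``one-parameter family'' is just the parametrized assignment over $I(y)$). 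Note that your claim that the transitions of Section \ref{crit} occur only at $\theta=\omega_\pm(y)$ and nowhere in the open interval is asserted rather than proved; this is the same connectivity leap the paper hides behind ``intermediate value theorem,'' so you are not worse off than the original, but it is the one step in either argument that would need a real justification. The only case you genuinely omit: by its definition $B$ contains the semicircles (\ref{eq:circ_d}) and (\ref{eq:circ_e}) for $u>0$ and the origin, where $\Theta(y)=0$; there the paper notes that $I(y)$ collapses to a single point and $\Delta(\mbox{\sc x},\mbox{\sc y}_\theta)$ degenerates to a {\sc c} isolated point (a bounded isotopy class with empty interior), a case your argument, which works under the assumption $y\in\mathrm{int}(B)$, does not cover.
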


\begin{proof} Consider $\mbox{\sc x} \in T\mathbb R^2$. Recall that the values of $\Theta$ are determined by a combination of short and long triangles, according to subsections \ref{short} and \ref{long}. 

Since $y\in B$ then $\Theta(y)\geq0$, see the facts in \ref{rem:data}. 

Suppose that $\Theta(y)>0$, then we have that $\omega_-\neq \omega_+$. This immediately implyies that $Y_{\omega_-}\neq Y_{\omega_+}$. We conclude that the bounded classes $\Delta(\mbox{\sc x}, \mbox{\sc y}_{\omega_-})\neq \Delta(\mbox{\sc x}, \mbox{\sc y}_{\omega_+})$. Since $\theta$ is continuous, by the intermediate value theorem the result follows. If $\Theta(y)=0$ then $\Delta(\mbox{\sc x}, \mbox{\sc y}_{\theta})$ is a {\sc c} isolated point with $I(y)$ being a single point. 
\end{proof}

%----------------------------------------------------------------------------------------
%  TYPES OF SPACES
%----------------------------------------------------------------------------------------
\begin{definition}\label{def:spaces}\hfill 
%----------------------------------------------------------------------------------------
%  TYPE I
%----------------------------------------------------------------------------------------
\begin{enumerate}
\item A family of spaces $\Gamma(\mbox{\sc x}, \mbox{\sc y}_\theta)$ such that:
\begin{itemize} 
\item  for $\theta=\omega_-$ we have that $\Delta(\mbox{\sc x}, \mbox{\sc y}_{\omega_-})$ is a {\sc cc} isolated point in $\Gamma(\mbox{\sc x}, \mbox{\sc y}_{\omega_-})$;
\item  for $\theta \in (\omega_-,\omega_+)$ there exists a bounded $\Delta(\mbox{\sc x}, \mbox{\sc y}_\theta)$;
\item for $\theta=\omega_+$ we have that $\Delta(\mbox{\sc x}, \mbox{\sc y}_{\omega_+})$ is a {\sc cc} isolated point in $\Gamma(\mbox{\sc x}, \mbox{\sc y}_{\omega_+})$;
\item for $\theta \notin [\omega_-,\omega_+]$ there is no bounded $\Delta(\mbox{\sc x}, \mbox{\sc y}_\theta)$
\end{itemize}
 is called a {\bf fiber of type I}.
 \vspace{.2cm}
 
% there exists a bounded non-empty $\Delta(\mbox{\sc x}, \mbox{\sc y}_\theta)$
% 
% there is no non-empty bounded $\Delta(\mbox{\sc x}, \mbox{\sc y}_\theta)$

%----------------------------------------------------------------------------------------
%  TYPE II
%----------------------------------------------------------------------------------------
\item A family of spaces $\Gamma(\mbox{\sc x}, \mbox{\sc y}_\theta)$ such that:

\begin{itemize} 
\item for $\theta=\omega_-$ we have that $\Delta(\mbox{\sc x}, \mbox{\sc y}_{\omega_-})$ is a {\sc cc} isolated point in $\Gamma(\mbox{\sc x}, \mbox{\sc y}_{\omega_-})$;

\item for $\theta \in (\omega_-,\omega_+]$ there exists a bounded $\Delta(\mbox{\sc x}, \mbox{\sc y}_\theta)$;

\item for $\theta \notin (\omega_-,\omega_+]$ there is no bounded $\Delta(\mbox{\sc x}, \mbox{\sc y}_\theta)$. 
\end{itemize}
Or,
\begin{itemize} 
\item for $\theta=\omega_+$ we have that $\Delta(\mbox{\sc x}, \mbox{\sc y}_{\omega_+})$ is a {\sc cc} isolated point in $\Gamma(\mbox{\sc x}, \mbox{\sc y}_{\omega_+})$;

\item for $\theta \in [\omega_-,\omega_+)$ there exists a bounded $\Delta(\mbox{\sc x}, \mbox{\sc y}_\theta)$;

\item for $\theta \notin [\omega_-,\omega_+)$ there is no bounded $\Delta(\mbox{\sc x}, \mbox{\sc y}_\theta)$
\end{itemize}

 is called a {\bf fiber of type II}.
 \vspace{.2cm}

%----------------------------------------------------------------------------------------
%  TYPE III
%----------------------------------------------------------------------------------------
\item A family of spaces $\Gamma(\mbox{\sc x}, \mbox{\sc y}_\theta)$ such that:

\begin{itemize}
\item for $\theta \in [\omega_-,\omega_+]$ there exists a bounded $\Delta(\mbox{\sc x}, \mbox{\sc y}_\theta)$;
\item for $\theta \notin [\omega_-,\omega_+]$ there is no bounded $\Delta(\mbox{\sc x}, \mbox{\sc y}_\theta)$
\end{itemize}
 is called a {\bf fiber of type III}.

 \vspace{.2cm}

%----------------------------------------------------------------------------------------
%  TYPE IV
%----------------------------------------------------------------------------------------
\item A family of spaces $\Gamma(\mbox{\sc x}, \mbox{\sc y}_\theta)$ such that:
\begin{itemize}
\item there is no bounded $\Delta(\mbox{\sc x}, \mbox{\sc y}_\theta)$ for all $\theta \in (-\pi,\pi]$
\end{itemize}
 is called a {\bf fiber of type IV}.
 \vspace{.2cm}

%----------------------------------------------------------------------------------------
%  TYPE V
%----------------------------------------------------------------------------------------
\item A family of spaces $\Gamma(\mbox{\sc x}, \mbox{\sc y}_\theta)$ is called a {\bf fiber of type V} if $x=y$. In this case, each $\Gamma(\mbox{\sc x}, \mbox{\sc y}_\theta)$, $\theta\in (-\pi,\pi]$ admit an isolated point, being a path of length zero. In addition, there is no bounded $\Delta(\mbox{\sc x}, \mbox{\sc y}_\theta)$ for all $\theta \in (-\pi,\pi]$.
\end{enumerate}
\end{definition}

%----------------------------------------------------------------------------------------
%	MAIN THEOREM BABY
%----------------------------------------------------------------------------------------
\begin{theorem}\label{maincensus1}Consider $\mbox{\sc x},\mbox{\sc y}_\theta \in T\mathbb R^2$ with $x\neq y$. 

Suppose $y\in B\subset \mathbb R^2$, then:
\begin{itemize}
\item If $y\in B_1$, then $\Gamma(\mbox{\sc x}, \mbox{\sc y}_\theta)$ is a fiber of type I. 
\item  If $y\in B_2$, then $\Gamma(\mbox{\sc x}, \mbox{\sc y}_\theta)$ is a fiber of type II.
\item If $y\in B_3$, then $\Gamma(\mbox{\sc x}, \mbox{\sc y}_\theta)$ is a fiber of type III.
\end{itemize}
If $y\in B_4=B^c$, then $\Gamma(\mbox{\sc x}, \mbox{\sc y}_\theta)$ is a fiber of type IV.

\noindent If $x=y$, then $\Gamma(\mbox{\sc x}, \mbox{\sc y}_\theta)$ is a fiber of type V.
\end{theorem}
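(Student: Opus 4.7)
The plan is to reduce the theorem to the case split already built into the construction of $\omega_-(y)$ and $\omega_+(y)$ in Section \ref{trans}, and then to identify, in each region $B_i$, which of the two critical-configuration regimes (short triangle versus long triangle) governs each endpoint of $I(y)$.

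First I would dispose of the degenerate cases. If $x=y$, the constant path is an isolated point of every $\Gamma(\mbox{\sc x},\mbox{\sc y}_\theta)$ by the length-zero remark in Section \ref{construct}, while no bounded isotopy class can exist there since a closed bounded-curvature path is never bounded-homotopic to a single point; this is fiber type V by Definition \ref{def:spaces}(5). If $y\in B^c=B_4$, then by the list of facts in \ref{facts} the signed difference $\omega_+(y)-\omega_-(y)$ is strictly negative, so $I(y)=\emptyset$ and $\Delta(\mbox{\sc x},\mbox{\sc y}_\theta)$ exists for no $\theta$; this is fiber type IV.

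Now suppose $y\in B$. Theorem \ref{existvect} supplies a one-parameter family of bounded isotopy classes $\Delta(\mbox{\sc x},\mbox{\sc y}_\theta)$ for $\theta$ ranging over an interval with endpoints $\omega_-(y)$ and $\omega_+(y)$. The remaining task is to decide, endpoint by endpoint, whether the corresponding space admits a \textsc{cc} isolated point (in which case the endpoint lies in $I(y)$) or whether it sits at the transition (\ref{con_t}) beyond which no trapped region exists (in which case the endpoint is excluded). This is exactly the dichotomy that produced the formulas (\ref{eq:wmin})--(\ref{eq:wmax}). For the left endpoint I would argue as follows: if $d(c_l(\mbox{\sc x}),y)<3$, the short-triangle construction of subsection \ref{short} shows that $\omega_-$ is achieved when $\mbox{\sc C}_l(\mbox{\sc x})$ and $\mbox{\sc C}_r(\mbox{\sc y}_{\omega_-})$ are externally tangent, so the associated length-minimizing path is a concatenation of two sub-$\pi$ unit arcs, hence a \textsc{cc} isolated point by Theorem 3.9 in \cite{paperc}, and $\omega_-\in I(y)$. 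If instead $d(c_l(\mbox{\sc x}),y)\geq 3$, the long-triangle analysis of subsection \ref{long} gives $d(c_l(\mbox{\sc x}),c_l(\mbox{\sc y}_{\omega_-}))=4$ at $\theta=\omega_-$; by the discussion in Section \ref{crit} and Theorem 5.3 in \cite{paperd}, this is the transition into proximity condition A, at which $\Omega$ fails to trap an embedded path (the boundary arcs $w_1,w_2$ of Definition \ref{boundomega} degenerate), so no bounded isotopy class exists and $\omega_-\notin I(y)$. The symmetric dichotomy with $l,r$ swapped characterises $\omega_+$.

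Collating these four subcases according to Definition \ref{cbc}: for $y\in B_1$ both endpoints are short-triangle critical, so $I(y)=[\omega_-,\omega_+]$ with a \textsc{cc} isolated point at each end, which is fiber type I; for $y\in B_2$ exactly one endpoint is short-triangle critical and the other long-triangle critical, giving the two half-open variants of fiber type II; for $y\in B_3$ both endpoints are long-triangle critical, so $I(y)=(\omega_-,\omega_+)$ with no isolated point at either end, which is fiber type III. The main obstacle I anticipate is the $B_2$ bookkeeping: one must check that the short-triangle formula in (\ref{eq:wmin}) really does apply to the $\omega_-$ endpoint on the side where $d(c_l(\mbox{\sc x}),y)<3$ while the long-triangle formula governs $\omega_+$ on the side where $d(c_r(\mbox{\sc x}),y)\geq 3$ (and the symmetric pairing), and that the two formulas then yield a genuinely half-open non-degenerate interval $I(y)$. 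This amounts to verifying that the semicircles (\ref{eq:circ_b}) and (\ref{eq:circ_c}) partition $B$ into $B_1,B_2,B_3$ consistently with the case split appearing in (\ref{eq:wmin})--(\ref{eq:wmax}), which is precisely the content of the description of $B$ given in subsection \ref{descB}.
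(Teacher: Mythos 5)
Your overall strategy coincides with the paper's: dispose of $x=y$ and $y\in B^c$ separately, invoke Theorem \ref{existvect} on $B$, and then match the short/long-triangle dichotomy of Section \ref{trans} to the regions $B_1,B_2,B_3$ of Definition \ref{cbc} to read off the fiber type. Two points, however, deserve attention.

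First, your justification for type V is not the paper's and, as stated, does not suffice. That a closed bounded curvature path is never bounded-homotopic to a point rules out contractibility, but it does not by itself rule out a bounded isotopy class among the closed paths. The paper's argument is different: closed paths have parallel tangents, hence by Proposition 3.8 in \cite{papere} they are bounded-homotopic to paths of arbitrary length, and by Theorem 7.12 in \cite{paperc} no such path can lie in a bounded isotopy class. You should route the type V case through that chain.

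Second, your endpoint bookkeeping at the long-triangle critical angles disagrees with the paper. You conclude that at such an angle $d(c_l(\mbox{\sc x}),c_l(\mbox{\sc y}_{\omega_-}))=4$ forces the failure of condition (\ref{con_c}) and hence excludes $\omega_-$ from $I(y)$, so that type III fibers would carry an open interval and type II a half-open interval closed at the \textsc{cc}-isolated-point end. The paper's proof and Definition \ref{def:spaces} assert the opposite: for $y\in B_3$ the bounded isotopy classes persist on the closed interval $[\omega_-,\omega_+]$, and for $y\in B_2$ the interval is closed at the long-triangle end and open (replaced by a \textsc{cc} isolated point) at the short-triangle end; compare Example \ref{paramex}, where $I(y)=[-48.36^{\circ},30.30^{\circ})$ with the isolated point at $30.30^{\circ}$. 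Your reading does follow from the strict inequality in the necessary condition stated in Section \ref{crit}, so you have in effect exposed a boundary-case tension in the source material rather than committed an outright error; but as written your case analysis does not reproduce the fiber types as they are defined in Definition \ref{def:spaces}, and you would need either to adopt the paper's convention for the $d=4$ configuration or to argue explicitly which behaviour actually occurs there.
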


\begin{proof} If $y\in B_1$, the endpoints $\mbox{\sc x},\mbox{\sc y}_\theta \in T\mathbb R^2$ have associated two short triangles, according to equations (\ref{eq:thetar_b}) and (\ref{eq:thetal_a}). Via equations (\ref{eq:Wmin}) and (\ref{eq:Wmax}) we obtain the values $\omega_-$ and $\omega_+$. Since $\Theta(y)>0$ (see the facts in \ref{rem:data}) Theorem \ref{existvect}, guarantees the existence of a family of bounded isotopy classes $\Delta(\mbox{\sc x}, \mbox{\sc y}_\theta)\subset \Gamma(\mbox{\sc x}, \mbox{\sc y}_\theta)$ for $ \theta \in (\omega_-,\omega_+)=I(y)$. Note that by construction $\Gamma(\mbox{\sc x}, \mbox{\sc y}_{\omega_-})$ and $\Gamma(\mbox{\sc x}, \mbox{\sc y}_{\omega_+})$ admit a {\sc cc} isolated point each. 

If $y\in B_2$, then the endpoints $\mbox{\sc x},\mbox{\sc y}_\theta \in T\mathbb R^2$ have associated one short and one long triangle via a combination of the equations (\ref{eq:thetar_b}) or (\ref{eq:thetal_a}), and (\ref{eq:thetar_c}), or (\ref{eq:thetal_b}). Via equations (\ref{eq:Wmin}) and (\ref{eq:Wmax}) we obtain the values $\omega_-$ and $\omega_+$. Since $\Theta(y)>0$ (see the facts in \ref{rem:data}) Theorem \ref{existvect}, guarantees the existence of a family $\Delta(\mbox{\sc x}, \mbox{\sc y}_\theta)\subset \Gamma(\mbox{\sc x}, \mbox{\sc y}_\theta)$ for $\theta \in [\omega_-,\omega_+)=I(y)$ (or $\theta \in (\omega_-,\omega_+]$). Note that by construction $\Gamma(\mbox{\sc x}, \mbox{\sc y}_{\omega_-})$ or $\Gamma(\mbox{\sc x}, \mbox{\sc y}_{\omega_+})$ admit a {\sc cc} isolated point. 

If $y\in B_3$, the endpoints $\mbox{\sc x},\mbox{\sc y}_\theta \in T\mathbb R^2$ have associated two long triangles, according to equations (\ref{eq:thetar_c}) and (\ref{eq:thetal_b}). Via equations (\ref{eq:Wmin}) and (\ref{eq:Wmax}) we obtain the values $\omega_-$ and $\omega_+$. Since $\Theta(y)>0$, again Theorem \ref{existvect}, guarantees the existence of a family $\Delta(\mbox{\sc x}, \mbox{\sc y}_\theta)\subset \Gamma(\mbox{\sc x}, \mbox{\sc y}_\theta)$ for $\theta \in [\omega_-,\omega_+]=I(y)$. 

If $y\in B_4=B^c$ we have that $\omega_+-\omega_-<0$, implying that there is no bounded isotopy class.

If $x=y$ then by Theorem 3.9 in \cite{paperc} we conclude that $\Gamma(\mbox{\sc x}, \mbox{\sc y}_\theta)$, $\theta\in [-\pi,\pi]$ admits an isolated point, being a path of length zero.  Since $\Gamma(\mbox{\sc x}, \mbox{\sc y}_\theta)$ admits only closed paths, they have parallel tangents, see \cite{papere}. Therefore, these closed paths are bounded-homotopic to paths of arbitrary length, see Proposition 3.8 in \cite{papere}. By Theorem 7.12 in \cite{paperc} none of these paths can be in a bounded isotopy class. Therefore, there is no bounded isotopy class for all $\theta \in (-\pi,\pi)$.
\end{proof}

%DUBINS QUESTION
%Let B be the set of e such that Fe is not arc-wise connected. It seems likely that B is a bounded open set. In particular, if e = (u, a), then, as e ranges over B, we guess that u ranges over a bounded subset of the plane and, what is less intuitive, that a ranges over a bounded set of angles. Moreover, there is a reasonable chance that if e e B, then Fe consists of precisely two components, F(l, e) and F(2, e). 

It is easy to see that there is a natural correspondence between $B\times I(y)$ and $\mathcal B$. Equivalently a correspondence between $B\times I(y)$ and the elements in $\mbox{\sc x},\mbox{\sc y}_\theta \in T\mathbb R^2$ so that there exists a bounded isotopy class. 

%----------------------------------------------------------------------------------------
%	DUBINS QUESTION A
%----------------------------------------------------------------------------------------
\begin{theorem} \label{noopnoclofib} The set of endpoints $\mbox{\sc x},\mbox{\sc y}_\theta \in T\mathbb R^2$ so that $\Gamma(\mbox{\sc x}, \mbox{\sc y}_\theta)$ is a fiber of type:
\begin{itemize}
\item  I, II, or III is bounded, neither open nor closed in $T\mathbb R^2$.
\item  IV is unbounded, neither open nor closed in $T\mathbb R^2$.
\item V is a unit circle.
\end{itemize}
\end{theorem}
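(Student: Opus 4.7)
By Theorem \ref{maincensus1}, with $\mbox{\sc x}$ fixed, the fiber type of $\Gamma(\mbox{\sc x}, \mbox{\sc y}_\theta)$ depends only on where $y\in\mathbb R^2$ lies: $y\in B_1, B_2, B_3, B^c$ yields types I, II, III, IV respectively, and $y=x$ yields type V. As $\theta$ varies over $(-\pi,\pi]$ for a given $y$, the endpoint $\mbox{\sc y}_\theta=(y,e^{i\theta})$ traces a copy of $\mathbb S^1$. Hence the set of endpoints $\mbox{\sc y}_\theta\in T\mathbb R^2$ giving a fiber of type I, II, III, IV, V is, respectively, $B_1\times\mathbb S^1$, $B_2\times\mathbb S^1$, $B_3\times\mathbb S^1$, $B^c\times\mathbb S^1$, and $\{x\}\times\mathbb S^1$. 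The case $x=y$ is then immediate: the endpoint set is exactly the unit circle $\{x\}\times\mathbb S^1$, establishing the type V assertion.

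For the boundedness assertions I would invoke the description of $B$ in Section \ref{classdomain}: the outer arc of $\partial B$ sits on the circle (\ref{eq:circ_a}) of radius $4$, so $B$ lies inside the closed disk of radius $4$ about the origin. Consequently $B_i\subset B$ is bounded for $i=1,2,3$, and remains bounded after taking the product with the compact $\mathbb S^1$. Meanwhile $B^c$ is the complement of a bounded set in $\mathbb R^2$, hence unbounded, so $B^c\times\mathbb S^1$ is unbounded in $T\mathbb R^2$.

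The delicate part is showing that each of $B_1, B_2, B_3, B^c$ is neither open nor closed in $\mathbb R^2$, since this property transfers directly to the product with the closed manifold $\mathbb S^1$. For $B_1$, non-closedness follows by taking a sequence inside $B_1$ converging to a point on $\{d(c_l(\mbox{\sc x}),y)=3\}$ or $\{d(c_r(\mbox{\sc x}),y)=3\}$; by Definition \ref{cbc} the limit lies in $B_2$, not $B_1$. Non-openness follows from the description of $B$ in Section \ref{classdomain}: $B$ contains part of its topological frontier, namely the semicircles (\ref{eq:circ_d}) and (\ref{eq:circ_e}) for $u>0$ and the origin, and one checks that certain points on these arcs satisfy both strict inequalities $d(c_l(\mbox{\sc x}),y)<3$ and $d(c_r(\mbox{\sc x}),y)<3$ and so lie in $B_1$; every neighbourhood of such a point meets $B^c$, so $B_1$ is not open. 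Analogous arguments handle $B_2$ and $B_3$, using the semicircles (\ref{eq:circ_f}), (\ref{eq:circ_g}) and the outer arc (\ref{eq:circ_a}) as the boundary pieces living in $B$, and the circles $\{d=3\}$ as the interfaces where $B_i$ meets its neighbours. For $B^c$ (type IV), non-closedness is witnessed by approaching a point of $\partial B$ that does lie in $B$, and non-openness by the fact that every neighbourhood of a point of $\partial B\setminus B$ intersects $B$.

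The main obstacle is the bookkeeping underlying the non-openness step: one must verify, arc by arc in the description of $\partial B$, which pieces belong to $B$ and then, for each such piece, which of the strict conditions in Definition \ref{cbc} actually hold, so that the arc populates $B_1$, $B_2$, or $B_3$. These checks reduce to evaluating $d(\cdot,c_l(\mbox{\sc x}))$ and $d(\cdot,c_r(\mbox{\sc x}))$ with $c_l(\mbox{\sc x})=(0,1)$ and $c_r(\mbox{\sc x})=(0,-1)$ along the circles (\ref{eq:circ_a}) and (\ref{eq:circ_d})-(\ref{eq:circ_g}); once tabulated, the topological conclusions of the theorem are immediate.
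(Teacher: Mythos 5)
Your proposal follows essentially the same route as the paper: reduce the statement to the planar sets $B_1,B_2,B_3,B^c$ via Theorem \ref{maincensus1}, take the product with the angular factor, get boundedness from the radius-$4$ circle (\ref{eq:circ_a}), and read off ``neither open nor closed'' from which boundary arcs the class domain does and does not contain. You are in fact more granular than the paper, which argues only for $B$ as a whole (non-open because it contains the $u>0$ semicircles of the adjacent circles, non-closed via a single excluded boundary point) and works with the set $B\times I(y)$ rather than with each $B_i\times\mathbb S^1$ separately; your per-$B_i$ treatment, using the interface circles $\{d(c_l(\mbox{\sc x}),y)=3\}$ and $\{d(c_r(\mbox{\sc x}),y)=3\}$ from Definition \ref{cbc}, is closer to what the itemized statement literally asserts. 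One slip to correct: by the definition of $B$ in Section \ref{classdomain}, the arcs of (\ref{eq:circ_f}), (\ref{eq:circ_g}) and of the outer circle (\ref{eq:circ_a}) are \emph{not} contained in $B$ (only the $u>0$ semicircles of (\ref{eq:circ_d}), (\ref{eq:circ_e}) and the origin are adjoined), so for $B_2$ and $B_3$ those outer arcs witness non-closedness rather than non-openness; non-openness of $B_2$ and $B_3$ must instead come from the $\{d=3\}$ interface circles, which lie in the interior of $B$ and which you do mention. Since you explicitly flag the arc-by-arc bookkeeping as the remaining task, this is a misassignment of roles rather than a missing idea, and the argument goes through once the roles are swapped.
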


\begin{proof} Consider $\mbox{\sc x} \in T\mathbb R^2$ and $y\in B$. Since $B\subset \mathbb R^2$ and $I(y)\subset (-\pi,\pi)$ are both bounded we have that $B\times I(y)$ is bounded. Note that $B$ is not open, since it contains the positive abscissa of the circles (\ref{eq:circ_c}) and  (\ref{eq:circ_d}) i.e., the image of {\sc c} isolated points. The set $B$ is not closed since the point $y=(0,1)$ is in the closure of $B$ but not in$B$, due the existence of parallel tangents, see Proposition 3.8 in \cite{papere}.
 
For the second statement, note that the set of endpoints $\mbox{\sc x},\mbox{\sc y}_\theta \in T\mathbb R^2$ so that $\Gamma(\mbox{\sc x}, \mbox{\sc y}_\theta)$ is a fiber of type $IV$ is unbounded since the fibers of type $IV$ have their final positions in $B_4=B^c$ being this set unbounded. Since $B$ is neither open nor closed, so its complement. Therefore $B_4\times (-\pi,\pi]$ is unbounded neither open nor closed.

Recall that the set of endpoints $(x,X),(y,Y_\theta)\in T\mathbb R^2$ with $x=y$ are such that $\Gamma(\mbox{\sc x}, \mbox{\sc y}_\theta)$ is a fiber of type $V$. Since $(x,X)$ remains fixed while $Y_\theta=e^{\theta i}$, $\theta\in (-\pi,\pi]$ the result follows.
\end{proof}

Next, we establish that $\mathcal B$ is bounded neither open nor closed, answering a question raised by Dubins in pp. 480 in \cite{dubins 2}. 

%----------------------------------------------------------------------------------------
%	DUBINS QUESTION B
%----------------------------------------------------------------------------------------
\begin{corollary} \label{noopnoclo}  $\mathcal B\subset T\mathbb R^2$ is neither open nor closed.
\end{corollary}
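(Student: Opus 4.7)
The plan is to use the explicit description of $\mathcal{B}$ that comes out of Theorem~\ref{maincensus1} together with Definition~\ref{def:spaces}: after fixing $\mbox{\sc x}$, the set $\mathcal{B}$ is parametrized as
\[
\mathcal{B} = \{(\mbox{\sc x}, \mbox{\sc y}_\theta) : y \in B,\ \theta \in I(y)\},
\]
where $I(y) = [\omega_-(y), \omega_+(y)]$ and the endpoints of $I(y)$ correspond to \textsc{c} or \textsc{cc} isolated points (still admissible bounded isotopy classes, with empty interior). Boundedness is immediate: $B$ is enclosed by the circles~(\ref{eq:circ_a})--(\ref{eq:circ_c}) and $\theta \in (-\pi,\pi]$, so $\mathcal{B}$ lies in a compact region.

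To show $\mathcal{B}$ is not open, I would pick $y$ in the interior of $B_3$, where the class-range facts in~\ref{rem:data} guarantee $\Theta(y)>0$. At the pair $(\mbox{\sc x}, \mbox{\sc y}_{\omega_+(y)}) \in \mathcal{B}$, perturbing the final direction to $Y_{\omega_+(y)+\varepsilon}$ with $\varepsilon>0$ exits $I(y)$, and by Theorem~\ref{noopnoclofib} no bounded isotopy class exists for the perturbed pair. Thus $(\mbox{\sc x}, \mbox{\sc y}_{\omega_+(y)})$ is a point of $\mathcal{B}$ that is simultaneously a boundary point of $\mathcal{B}$, so $\mathcal{B}$ is not open.

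To show $\mathcal{B}$ is not closed, I would exploit that $B$ itself is not closed: as noted in the proof of Theorem~\ref{noopnoclofib}, the point $(0,1)$ belongs to $\mathrm{cl}(B)\setminus B$, excluded by the parallel-tangent obstruction (Corollary~3.4 in \cite{papere}). Choose a sequence $y_n \in B$ with $y_n \to (0,1)$, select $\theta_n \in I(y_n)$ (possible since $\Theta(y_n) \geq 0$, hence $I(y_n) \neq \emptyset$), and extract a subsequence with $\theta_n \to \theta^*$ by compactness of $[-\pi,\pi]$. The pairs $(\mbox{\sc x}, \mbox{\sc y}_{n,\theta_n})$ lie in $\mathcal{B}$ but converge to $(\mbox{\sc x}, ((0,1), Y_{\theta^*}))$, which is not in $\mathcal{B}$ because no bounded isotopy class can have final position $(0,1)$ for any choice of $Y_{\theta^*}$.

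The only delicate point I expect is making sure the limit endpoint $((0,1), Y_{\theta^*})$ is outside $\mathcal{B}$ for \emph{every} $\theta^*$, not merely for the specific directions ruled out in \cite{papere}. This is already subsumed by the parallel-tangent argument quoted just after equations~(\ref{eq:Wmin})--(\ref{eq:Wmax}) (and in the proof of Theorem~\ref{noopnoclofib}), so the sequence argument closes directly and nothing further is needed.
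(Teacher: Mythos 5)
Your proof is correct, and your non-closedness argument is essentially the paper's: both hinge on the point $(0,1)\in cl(B)\setminus B$ being excluded by the parallel-tangent obstruction from \cite{papere}; you merely make the convergent sequence explicit. Where you genuinely diverge is in the non-openness half. The paper's own proof of the corollary is a one-line reduction to Theorem \ref{noopnoclofib} via the correspondence between $\mathcal B$ and $B\times I(y)$, and there the failure of openness is witnessed in the \emph{base} direction: $B$ contains the semicircles (\ref{eq:circ_d}) and (\ref{eq:circ_e}) for $u>0$ (the final positions of the {\sc c} isolated points), which lie on $\partial B$. You instead witness it in the \emph{fiber} direction: for $y$ interior to $B_3$ the fiber is of type III, so $\theta=\omega_+(y)$ belongs to $I(y)$ while $\omega_+(y)+\varepsilon$ does not, making $(\mbox{\sc x},\mbox{\sc y}_{\omega_+(y)})$ a point of $\mathcal B$ on its own boundary. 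Both witnesses are legitimate; yours has the minor advantage of not depending on the bookkeeping of exactly which arcs of $\partial B$ are included in $B$, only on the closedness of $I(y)$ for type III fibers guaranteed by Theorem \ref{maincensus1}. One caveat: your blanket claim that $I(y)=[\omega_-(y),\omega_+(y)]$ with endpoints always corresponding to {\sc c} or {\sc cc} isolated points is inaccurate as stated --- for type II fibers one endpoint of the interval is excluded, and for type III fibers the endpoints carry bounded isotopy classes with nonempty interior rather than isolated points --- but since your argument only invokes the type III case, where the interval is indeed closed, this imprecision does not affect the validity of the proof.
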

\begin{proof}  Immediate from Theorem \ref{noopnoclofib} and the obvious correspondence between $\mathcal B$ and $B\times I(y)$.
\end{proof}

\begin{corollary}\label{cor:param}Consider $\mbox{\sc x}, \mbox{\sc y}_\theta \in T\mathbb R^2$ with $y\in B$. Then:
	\begin{itemize}
	         \item isolated points of zero length are parametrized in the unit circle.
		\item	 isolated points of type {\sc c} are parametrized  in $(0,\pi)\sqcup (0,\pi)$. 
		\item	 isolated points of type {\sc cc} are parametrized  in $$ (0,\pi)\times (0,\pi) \sqcup (0,\pi)\times (0,\pi).$$
		\item The bounded isotopy classes are parametrized in $B\times I(y)$.
		\end{itemize}
\end{corollary}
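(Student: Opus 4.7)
The plan is to handle each of the four bullet points in Corollary \ref{cor:param} separately, drawing on the fiber classification in Theorem \ref{maincensus1} and the description of isolated points recalled in subsection \ref{construct}. No new topological arguments are required; the work is to match each class of isolated object with a natural parameter set.

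For the first bullet, I invoke the fiber of type V: by Definition \ref{def:spaces}(5), whenever $x=y$ the constant path of length zero lies in $\Gamma(\mbox{\sc x}, \mbox{\sc y}_\theta)$ and is an isolated point. Since $\mbox{\sc x}$ is fixed, the only free parameter is the terminal direction $Y_\theta=e^{\theta i}\in T_y\mathbb R^2$ with $\theta\in(-\pi,\pi]$, which parametrizes the unit circle. For the second bullet, I use the description in subsection \ref{construct}: a \textsc{c} isolated point is an arc on an adjacent circle of length strictly less than $\pi$; up to a choice of adjacent circle ($\mbox{\sc C}_l(\mbox{\sc x})$ or $\mbox{\sc C}_r(\mbox{\sc x})$), the arc is determined by its length. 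Each of the two choices yields an interval $(0,\pi)$ of arc lengths, giving the disjoint union $(0,\pi)\sqcup(0,\pi)$.

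For the third bullet, a \textsc{cc} isolated point is a concatenation of two arcs of unit radius, each of length in $(0,\pi)$, with exactly two configurations depending on the turning sense of the two arcs (\textsc{lr} or \textsc{rl}, the other two sense-assignments being excluded by the $C^1$-matching condition between arcs). Each configuration is parametrized by the ordered pair of arc lengths in $(0,\pi)\times(0,\pi)$, and taking the disjoint union over the two configurations gives the stated parameter set.

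The fourth bullet is essentially a reformulation of Theorem \ref{existvect} and the construction of $I(y)$ in Definition \ref{defw}: for each $y\in B$ there exists a one-parameter family of bounded isotopy classes $\Delta(\mbox{\sc x},\mbox{\sc y}_\theta)$ indexed precisely by $\theta\in I(y)$, while for $y\notin B$ no such class exists. Hence the total parametrization is $B\times I(y)$, and the natural correspondence with $\mathcal B$ noted immediately before Theorem \ref{noopnoclofib} yields the result. The only delicate point is the boundary behavior of $I(y)$, namely whether the critical angles $\omega_\pm$ are included; this is governed by Theorem \ref{maincensus1} (fibers of types I, II, III) and does not affect the parametrization statement since the bounded isotopy classes at the endpoints of $I(y)$ degenerate to \textsc{cc} isolated points already counted in bullet three.
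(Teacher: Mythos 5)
Your proposal is correct and follows essentially the same route as the paper, which simply cites Theorem \ref{noopnoclofib} (and the correspondence between $B\times I(y)$ and $\mathcal B$) for the first and fourth bullets and declares the second and third ``immediate.'' You merely spell out the details the paper leaves implicit: the arc-length parametrization of {\sc c} and {\sc cc} isolated points over the two admissible turning-sense configurations, and the identification of the type~V fiber with the circle of final directions.
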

\begin{proof}  The first and fourth statements were proven in Theorem \ref{noopnoclofib}. The second and third statements are immediate.
\end{proof}

%----------------------------------------------------------------------------------------
%	UPDATE OF THE CLASSIFICATION: SECTION
%----------------------------------------------------------------------------------------
\section{On the classification of the homotopy classes of bounded curvature paths}

Next we present an updated version of Theorem 6.2 in \cite{paperd} by considering the existence of isotopy classes in terms of the values of the class range function.

We first revise Remark \ref{gammaparameter}. Given $\mbox{\sc x}\in T{\mathbb R}^2$,
$$\Gamma=\bigcup_{\substack{{y \in \mathbb R^2}\\ \theta \in (-\pi,\pi]}}\Gamma(\mbox{\sc x}, \mbox{\sc y}_\theta).$$ 

Given $\mbox{\sc x}, \mbox{\sc y} \in T\mathbb R^2$. Let,
$$\Gamma(\mbox{\sc x}, \mbox{\sc y})=\bigcup_{\substack{n \in \mathbb Z}}\Gamma(n)$$ 
where
$$\Gamma(n)=\{\gamma\in \Gamma(\mbox{\sc x,y}): \tau(\gamma)=n, n\in \mathbb Z\},$$ 
with $\tau(\gamma)$ being the turning number\footnote{In \cite{paperd} we used the analogous idea of turning number by considering closed path.} of $\gamma$, see Definition 4.1 in \cite{paperb}. 

For each $\theta \in (-\pi,\pi]$ we have that,
$$\Gamma_n(\mbox{\sc x}, \mbox{\sc y}_\theta)=\{\gamma\in \Gamma(\mbox{\sc x},\mbox{\sc y}_\theta): \tau(\gamma)=n, n\in \mathbb Z\}.$$ 
Suppose that $\Delta(\mbox{\sc x}, \mbox{\sc y}_\theta)\subset \Gamma_k(\mbox{\sc x}, \mbox{\sc y}_\theta)$, for some $\theta\in (-\pi,\pi]$, $k\in \mathbb Z$. The space $\Delta'(\mbox{\sc x}, \mbox{\sc y}_\theta)\subset \Gamma_k(\mbox{\sc x}, \mbox{\sc y}_\theta)$ is the space of paths bounded-homotopic to paths with self-intersections. In \cite{paperd} we proved that:
$$\Delta(\mbox{\sc x}, \mbox{\sc y}_\theta)\cup \Delta'(\mbox{\sc x}, \mbox{\sc y}_\theta)=\Gamma_k(\mbox{\sc x}, \mbox{\sc y}_\theta).$$

The proof of Theorem \ref{paramclass} is immediate from the facts in \ref{rem:data}, Theorem \ref{noopnoclofib} and Theorem 6.2 in \cite{paperd}.

%----------------------------------------------------------------------------------------
%	THEOREM: UPDATED CLASSIFICATION
%----------------------------------------------------------------------------------------
\begin{theorem}\label{paramclass} Choose $\mbox{\sc x}, \mbox{\sc y}_{\theta} \in T\mathbb R^2$ we have that:
\begin{equation} 
	\label{eq:main1}
		 \Gamma(\mbox{\sc x}, \mbox{\sc y}_\theta)=\bigcup_{\substack{n \in \mathbb Z}}\Gamma_n(\mbox{\sc x}, \mbox{\sc y}_\theta),\hspace{.2cm} \theta\in(-\pi,\pi].
\end{equation} 

\begin{enumerate}
\item If $\Theta(y)>0$ there exists a family of bounded isotopy classes $\Delta(\mbox{\sc x}, \mbox{\sc y}_\theta)$ so that:

\begin{equation} 
	\label{eq:main2}
		 \Gamma_k(\mbox{\sc x}, \mbox{\sc y}_\theta)=\Delta(\mbox{\sc x}, \mbox{\sc y}_\theta) \cup \Delta'(\mbox{\sc x}, \mbox{\sc y}_\theta), \hspace{.2cm}  \mbox{for}\hspace{.2cm}  \theta\in I(y), \hspace{.2cm}  \mbox{and some}\hspace{.2cm}  k\in \mathbb Z.
	\end{equation} 
	In particular, if: 
	\begin{itemize}
	
\item	If $y\in B_1$ then $\Gamma(\mbox{\sc x}, \mbox{\sc y}_\theta)$ is a fiber to type I. 
\item If $y\in B_2$ then $\Gamma(\mbox{\sc x}, \mbox{\sc y}_\theta)$ is a fiber to type II.
\item  If $y\in B_3$ then $\Gamma(\mbox{\sc x}, \mbox{\sc y}_\theta)$ is a fiber to type III.
\item  If $y\in B_4$ then $\Gamma(\mbox{\sc x}, \mbox{\sc y}_\theta)$ is a fiber to type IV.
\item  If $y=x$ then $\Gamma(\mbox{\sc x}, \mbox{\sc y}_\theta)$ is a fiber to type V.
\end{itemize}

\item If $y\in B$, $y\neq x$ and $\Theta(y)=0$ we may have a {\sc c} or a {\sc cc} isolated point. 
%And, there is no (non-empty interior) bounded isotopy class.

	\item	If $\omega_+(y)-\omega_-(y)<0$ we have that there is no bounded isotopy class.	
\end{enumerate}
\end{theorem}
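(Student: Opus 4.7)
The plan is to assemble the statement from three ingredients already in hand: the partition of $\Gamma(\mbox{\sc x}, \mbox{\sc y}_\theta)$ by turning number, Theorem 6.2 in \cite{paperd}, and the fiber classification in Theorem \ref{maincensus1}. For the decomposition (\ref{eq:main1}), I would note that the turning number $\tau$ (Definition 4.1 in \cite{paperb}) is an integer-valued invariant constant on path-connected components, so every $\gamma\in \Gamma(\mbox{\sc x}, \mbox{\sc y}_\theta)$ lies in exactly one $\Gamma_n$; this gives the disjoint union immediately.

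For the first bullet of part (1), I would invoke Theorem \ref{existvect}: given $y\in B$ with $\Theta(y)>0$, there is a one-parameter family of bounded isotopy classes $\Delta(\mbox{\sc x}, \mbox{\sc y}_\theta)$ indexed by $\theta\in I(y)$. Each such class is contained in a single $\Gamma_k$ because $\tau$ is invariant under bounded-curvature homotopy. Theorem 6.2 in \cite{paperd} then tells us that this $\Gamma_k$ decomposes as $\Delta(\mbox{\sc x}, \mbox{\sc y}_\theta) \cup \Delta'(\mbox{\sc x}, \mbox{\sc y}_\theta)$, where $\Delta'$ consists of paths bounded-homotopic to a path with self-intersections. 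This yields (\ref{eq:main2}). The subsequent case split into $B_1,B_2,B_3,B_4$ and $y=x$ is then a direct rephrasing of Theorem \ref{maincensus1}, once one checks that Definition \ref{cbc} is precisely the partition of $\mathbb R^2$ according to which of the relations $d(c_l(\mbox{\sc x}),y)<3$ and $d(c_r(\mbox{\sc x}),y)<3$ hold, hence according to whether short or long triangles determine $\omega_\pm$ via (\ref{eq:wmin})--(\ref{eq:wmax}).

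For part (2), $\Theta(y)=0$ with $y\in B$, $y\neq x$ forces $\omega_-(y)=\omega_+(y)$, so $I(y)$ collapses to a single angle. By the geometric interpretation of the critical angles (Subsections \ref{short}--\ref{long}), at this unique $\theta$ the adjacent circles $C_l(\mbox{\sc x})$ and $C_r(\mbox{\sc y}_\theta)$ (or their mirror counterpart) meet tangentially, producing a {\sc cc} concatenation of length less than $2\pi$; when in addition $y$ lies on one of the semicircles (\ref{eq:circ_d}) or (\ref{eq:circ_e}) the two-arc path collapses to a single arc, i.e.\ a {\sc c} isolated point. That these paths are genuinely isolated in their path space is Theorem 3.9 in \cite{paperc}. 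Part (3) is immediate: $\omega_+(y)<\omega_-(y)$ places $y\in B^c=B_4$ by the facts collected in \ref{rem:data}, so $\Omega=\emptyset$ by Theorem \ref{maincensus1} and no bounded isotopy class exists.

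The hard part, in my view, is not any individual step but verifying the correspondence between the analytic partition (which branch of (\ref{eq:Wmin})--(\ref{eq:Wmax}) is active) and the geometric partition $B_1\sqcup B_2\sqcup B_3\sqcup B_4$ of Definition \ref{cbc}. All pieces already appear separately in the paper, so the remaining work is the bookkeeping: making sure that for each $y$ the short-triangle formula (\ref{eq:thetar_b}) or (\ref{eq:thetal_a}) is applied exactly when $d(c_l(\mbox{\sc x}),y)<3$ or $d(c_r(\mbox{\sc x}),y)<3$ respectively, and the long-triangle formula otherwise. Once this matching is made explicit, the theorem reduces to stitching together Theorem \ref{existvect}, Theorem \ref{maincensus1}, Theorem \ref{noopnoclofib}, and Theorem 6.2 in \cite{paperd}.
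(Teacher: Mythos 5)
Your proposal is correct and follows essentially the same route as the paper, which simply declares the theorem immediate from the facts in \ref{rem:data}, Theorem \ref{noopnoclofib}, and Theorem 6.2 in \cite{paperd}; you supply the same assembly (adding the explicit appeals to Theorems \ref{existvect} and \ref{maincensus1}, which is where the fiber-type case split actually lives) in somewhat more detail than the authors do.
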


%----------------------------------------------------------------------------------------
%	APPENDIX
%----------------------------------------------------------------------------------------
\begin{center} {\sc Appendix\\ Homotopy classes and deformations of Dubins paths}
\end{center}

We would like to motivate a theory analyzing algorithmic aspects of deformations of piecewise bounded curvature paths of constant curvature. Many standard questions in computational geometry can be adapted for this class of paths. 

In \cite{paperd} we defined operations on bounded curvature paths being a finite number of concatenations of line segments and arcs of unit radius circles, the so-called $cs$ paths. The line segments and arcs of circles are called components. The number of components is called the complexity of the path\footnote{Dubins paths have complexity at most 3.}. Also in \cite{paperd}, we proved that a $cs$ path can be constructed arbitrarily close to any given bounded curvature path. It is of interest to study the computational complexity of deforming $cs$ paths.

It is not hard to see that for any given $\mbox{\sc x,y}\in T\mathbb R^2$ the space $\Gamma(\mbox{\sc x,y})$ has a finite number of Dubins paths. In Example \ref{ex:spaces} we index Dubins paths according to their length. The length minimizer in $\Gamma(\mbox{\sc x,y})$ is denoted by $\gamma_0$.  

Next, we relate the types of connected components, the number of local minima, number of global minima, existence of local maxima, and deformations of $cs$ paths. In Fig. \ref{fig:spaces} we consider seven illustrations, and in Example \ref{ex:spaces} we consider seven items. We associate illustrations and items in an obvious way. 

%----------------------------------------------------------------------------------------
%	FIGURE 14
%----------------------------------------------------------------------------------------
\begin{figure}[h]
	\centering
	\includegraphics[width=1\textwidth,angle=0]{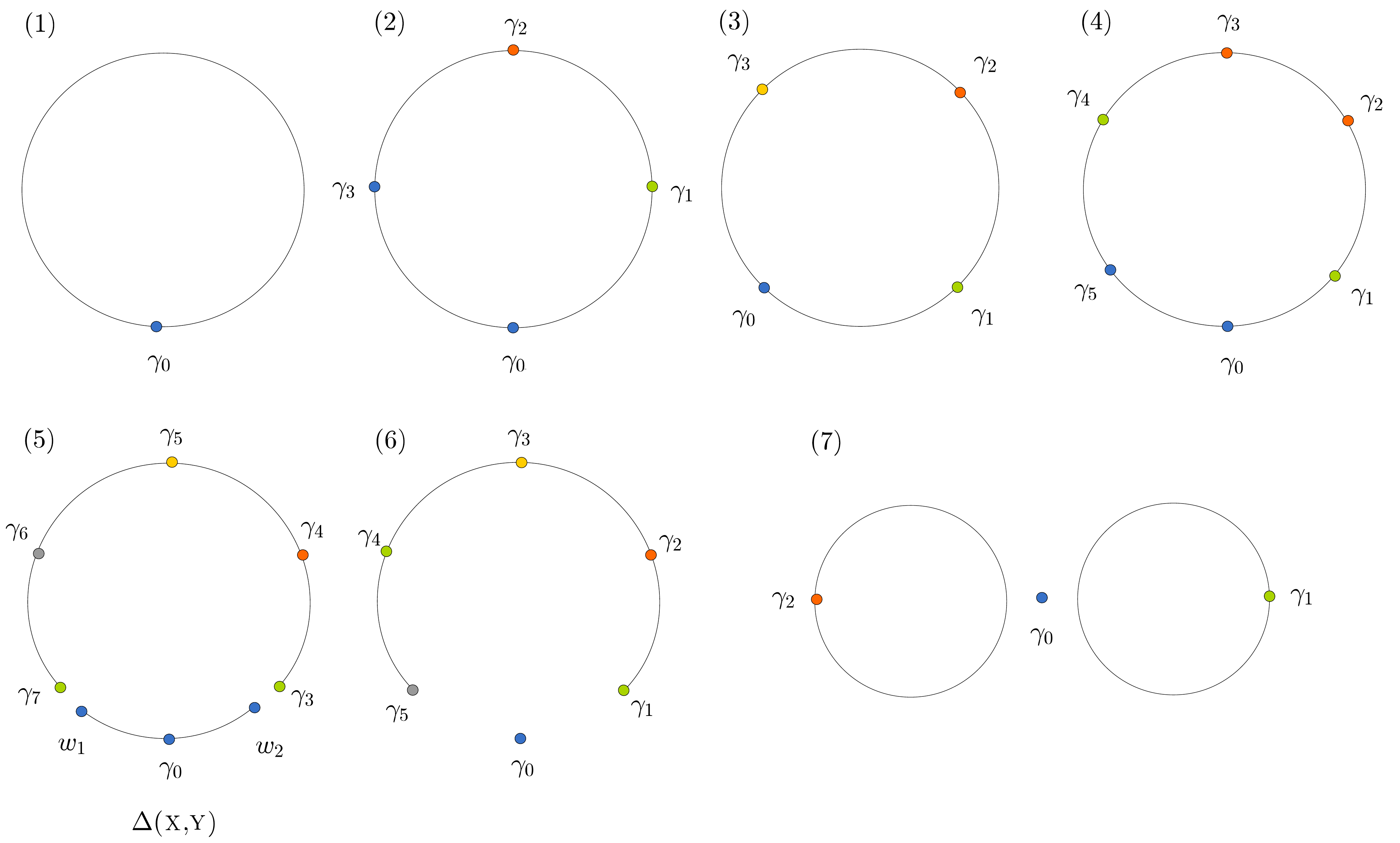}
	\caption{A schematic representation for spaces of bounded curvature paths. Each of the seven illustrations represent the connected components $\Gamma(n)\subset \Gamma(\mbox{\sc x,y})$ admitting Dubins paths, here represented by points. Points with the same color suggest that the associated paths are bounded-homotopic, see Figs. \ref{sixdubb1}, \ref{fourdub}-\ref{sixdubb3}.}
	\label{fig:spaces}
\end{figure}
\begin{example}\label{ex:spaces}Consider $x=(0,0)$, $X=e^{2\pi i}\in T_x\mathbb R^2$. In Figure \ref{fig:spaces} we illustrate spaces $\Gamma(\mbox{\sc x,y})$ such that: \hfill
\begin{enumerate} 

\item $y=(z,0)$, $z\in\mathbb R^+$, $Y=e^{2\pi i}\in T_y\mathbb R^2$. This example corresponds to the Euclidean geometry case (up to isometries) where the single length minimizer between any two points is a line segment. 

\item  $y=(4,-8)$, $Y=e^{2\pi i}\in T_y\mathbb R^2$. There are four Dubins paths, $\gamma_0$ being the length minimizer, see Fig. \ref{fourdub}. The paths $\gamma_0$ and $\gamma_3$ are bounded-homotopic. This is checked in Proposition 4.3 and Fig. 13 in \cite{paperd}.

\item $y=(z,0)$, $z\geq 4$, $Y=e^{\pi i}\in T_y\mathbb R^2$. There are four Dubins paths, with $\gamma_0$ and $\gamma_1$ being length minimizers. These four paths are not bounded-homotopic one to the other. In Fig. 1 in \cite{paperb} we illustrate the two length minimizers.  

\item $y=(-2,1)$, $Y=e^{-\frac{\pi}{4} i}\in T_y\mathbb R^2$. There are six Dubins paths, one being the length minimizer, see Fig. \ref{sixdubb1}. The paths $\gamma_0$  and $\gamma_5$; $\gamma_1$ and $\gamma_4$, and $\gamma_2$ and $\gamma_3$ are pair-wise bounded-homotopic. This can be verified by applying Proposition 4.4 in \cite{paperd}.

 \item $y=(3,0)$, $Y=e^{\frac{\pi}{3} i}\in T_y\mathbb R^2$. Since $y\in B$, then $\Theta(y)>0$, so we have that there exists a bounded isotopy class $\Delta(\mbox{\sc x,y})$, or equivalently $\Omega\neq \emptyset$. In this case, the length minimiser in $\Gamma(\mbox{\sc x,y})$ is a unique {\sc csc} path and it is an element in $\Delta(\mbox{\sc x,y})$. This is a consequence of Proposition 2.13 in \cite{papera} and Theorem 8.1 in \cite{paperc}. 

 Note that there are eight Dubins paths, one being the length minimizer ($\gamma_0$ lies in $\Omega$), see Fig. \ref{sixdubb2}. In addition, the paths $\gamma_0$, $w_1$ and $w_2$ are bounded-isotopic one to the other since they are paths in $\Delta(\mbox{\sc x,y})$, see Theorem 5.4 in \cite{paperd}. It seems plausible to think that $w_1$ and $w_2$ are local maxima (not local minima) of length. In addition, the path $\gamma_3$ is the length minimizer in $\Delta'(\mbox{\sc x,y})$.

\item there are six Dubins paths, the length minimizer being an isolated point, see Fig. \ref{sixdubb3} and Theorem 3.9 in \cite{paperc}. By a similar argument as the one in Proposition 4.4 in \cite{paperd} we conclude that $\gamma_1$ is bounded-homotopic to $\gamma_4$.

\item  $x=y$ and $X=Y$. Closed bounded curvature paths are not bounded-homotopic to a single point. In this case, the length minimiser $\gamma_0$ is an isolated point of length zero, see Theorem 3.9 in \cite{paperc}.  There are two non-trivial length minimisers, say $\gamma_1$ and $\gamma_2$. These paths lie in the adjacent circles $C_l(\mbox{\sc x})$ and $C_r(\mbox{\sc x})$ respectively. It is easy to see that $\gamma_1$ and $\gamma_2$ lie in different homotopy classes since they have winding number $1$ and $-1$ respectively, see Theorem 4.6 in \cite{paperb}.
\end{enumerate}
\end{example}

After the previous examples, a natural task would be to determine for any pair of endpoints $\mbox{\sc x,y}\in T\mathbb R^2$ the exact number of homotopy classes admitting Dubins paths. This should be done after first describing all the possible scenarios for homotopies between Dubins paths. 

A closely related problem is the following. Given $\mbox{\sc x},\mbox{\sc y}_\theta\in T\mathbb R^2$, $\theta \in (-\pi,\pi]$. Describe how the number of Dubins paths vary, as we vary $\theta \in (-\pi,\pi]$. Also, describe how the type of all (up to eight?) Dubins paths vary for all the fibers. A description of the type of the global minimum (first Dubins path) has been obtained in \cite{bui}.

Given $ \mbox{\sc x},\mbox{\sc y}\in T\mathbb R^2$. What are the complexity $n>3$ $cs$ paths of minimal length? For certain pairs the answer is trivial. What if $d(x,y)<4$?

Given two $cs$ paths with prescribed complexity and lying in the same homotopy class. What is the minimal number of operations (or moves) to deform one path into the other? What are these moves?

%----------------------------------------------------------------------------------------
%	FIGURE 15
%----------------------------------------------------------------------------------------
\begin{figure}[h]
	\centering
	\includegraphics[width=.9\textwidth,angle=0]{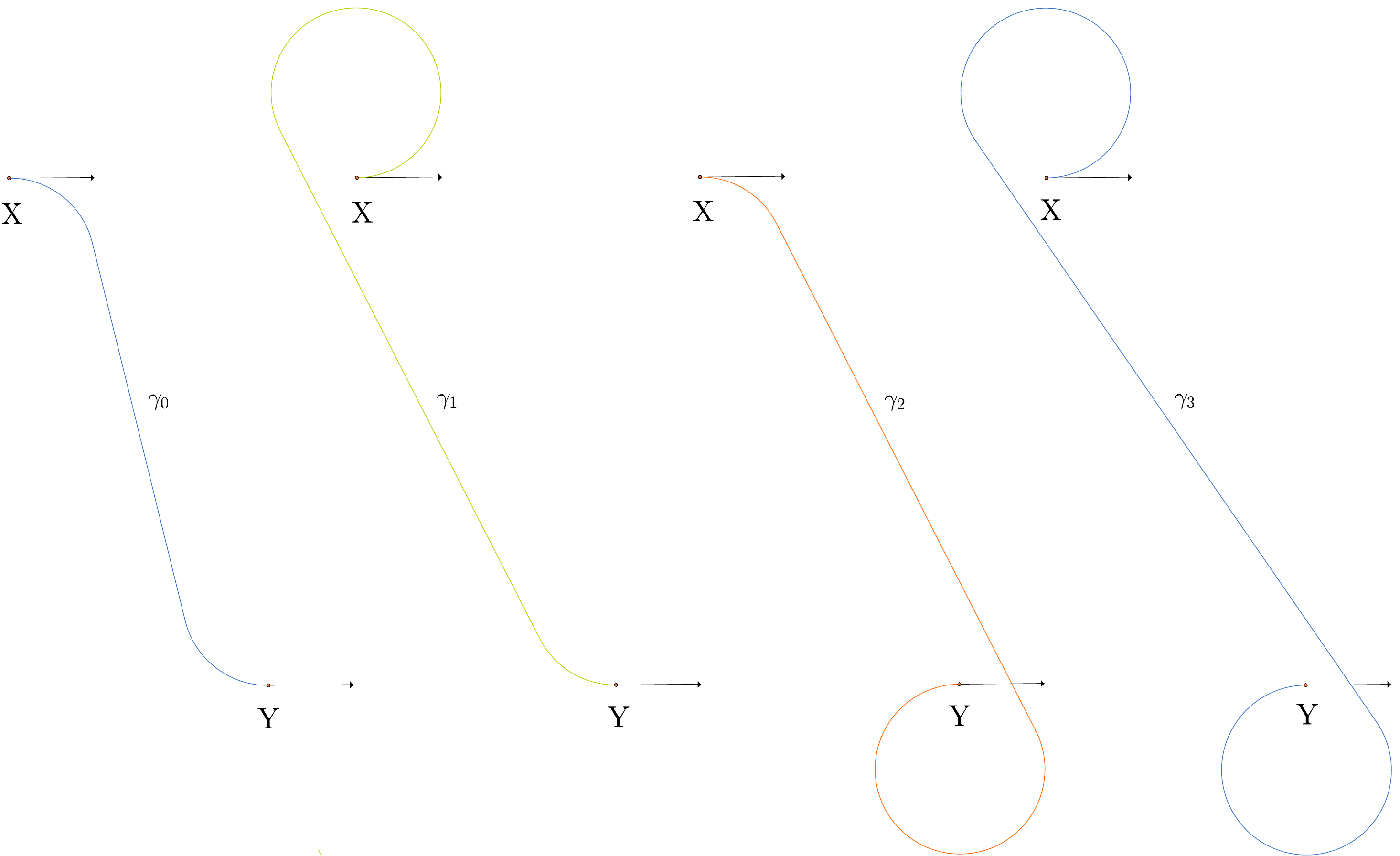}
	\caption{Spaces of bounded curvature paths may have four local minima of length. Note that $\gamma_0$ and $\gamma_3$ are bounded-homotopic, see Proposition 4.3 in \cite{paperd}.}
	\label{fourdub}
\end{figure}

%----------------------------------------------------------------------------------------
%	FIGURE 16
%----------------------------------------------------------------------------------------
\begin{figure}[h]
	\centering
	\includegraphics[width=1\textwidth,angle=0]{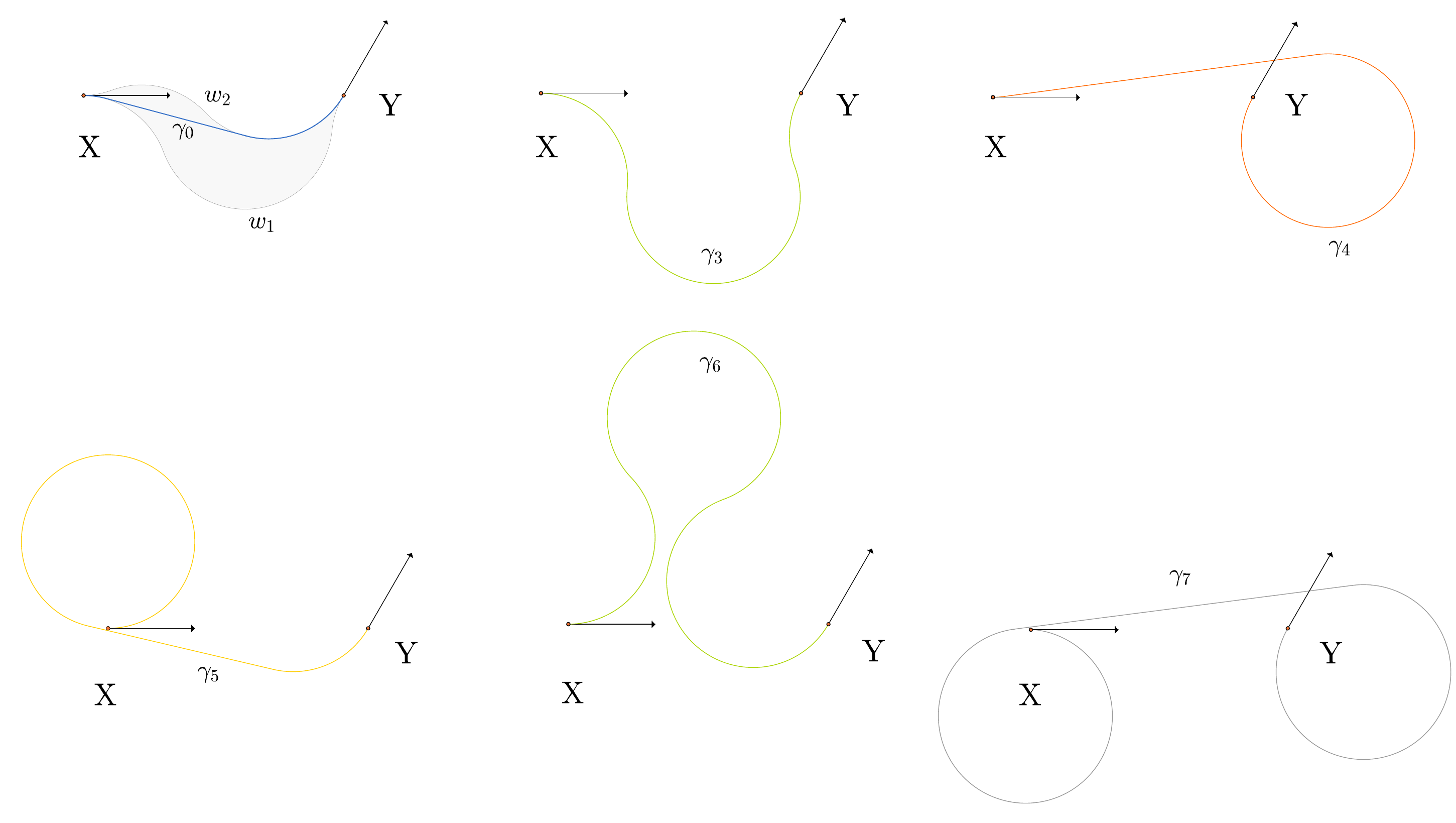}
	\caption{Spaces of bounded curvature paths may have up to eight {\sc csc-ccc} paths, local minima (or maxima) of length. Note that $\gamma_3$ and $\gamma_6$ are bounded-homotopic.}
	\label{sixdubb2}
\end{figure}

%----------------------------------------------------------------------------------------
%	FIGURE 17
%----------------------------------------------------------------------------------------
\begin{figure}[h]
	\centering
	\includegraphics[width=.9\textwidth,angle=0]{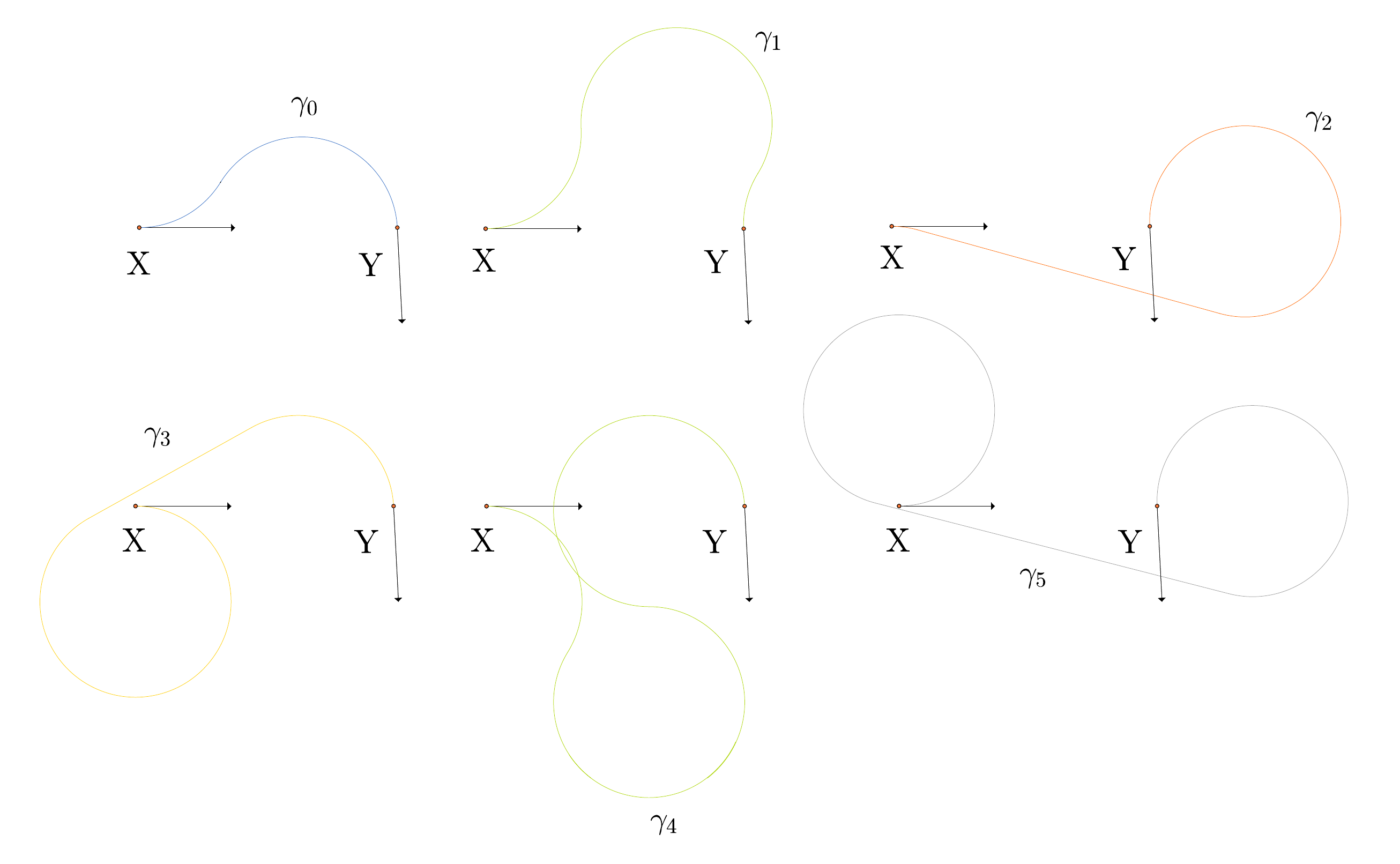}
	\caption{Spaces of bounded curvature paths may have six local minima of length. Note that $\gamma_1$ and $\gamma_4$ are bounded-homotopic. It is of interest to classify the fibers $\Gamma(\mbox{\sc x},\mbox{\sc y}_\theta)$ in terms of the way the type and number of Dubins paths changes as $\theta$ varies, see also Figs. \ref{sixdubb1}, \ref{fourdub} and \ref{sixdubb2}. }
	\label{sixdubb3}
\end{figure}

%----------------------------------------------------------------------------------------
%	REFERENCES
%----------------------------------------------------------------------------------------
\bibliographystyle{amsplain}

\begin{thebibliography}{11} \rm


 \bibitem{aga1}P. K. Agarwal, T. Biedl, S. Lazard, S. Robbins,
S. Suri, and S. Whitesides. Curvature-constrained shortest
paths in a convex polygon. SIAM J. Comput., 31(6):1814-1851
(electronic), 2002.

\bibitem{paperb}J. Ayala, Length minimising bounded curvature paths in homotopy classes, Topology and its Applications, v.193:140-151, 2015. 
 
     \bibitem{papere} J. Ayala. On the topology of the spaces of curvature constrained plane curves. Advances in Geometry, Vol 17, No 3, pp.283-292, 2017.      

\bibitem{papera} J. Ayala, D. Kirszenblat, and J.H. Rubinstein. A geometric approach to shortest bounded curvature paths. Communications in Analysis and Geometry, Vol. 26, No 4, 2018.

\bibitem{paperc}J. Ayala and J.H. Rubinstein, Non-uniqueness of the Homotopy Class of Bounded Curvature Paths (2014) arXiv:1403.4911 [math.MG].
   
\bibitem{paperd} J. Ayala and J.H. Rubinstein. The classification of homotopy classes of bounded curvature paths. Israel Journal of Mathematics, Vol. 213, No 1, pp 79-107, 2016. 

\bibitem{baker} J. Backer and D. Kirkpatrick, Finding curvature-constrained
paths that avoid polygonal obstacles. In SCG 07: Proceedings
of the twenty-third annual symposium on Computational
geometry, pages 66-73, New York, NY, USA, 2007. ACM Press.

%\bibitem{balkcom}D. Balkcom and M. T. Mason, Time optimal trajectories for bounded velocity differential drive vehicles, International Journal of Robotics Research 21(3), 2002.

\bibitem{buasanei1} J.-D. Boissonat, A. Cerezo; K. Leblond (May 1992). "Shortest Paths of Bounded Curvature in the Plane". Proceedings of the IEEE International Conference on Robotics and Automation. 3. Piscataway, NJ. pp. 2315-2320.
%
\bibitem{buasanei2}J.-D. Boissonnat and S. Lazard. A polynomial-time algorithm
for computing shortest paths of bounded curvature amidst
moderate obstacles. Internat. J. Comput. Geom. Appl., 13(3):189-229, 2003.

\bibitem {brazil 1}  M. Brazil, P.A. Grossman, D.A. Thomas, J.H. Rubinstein, D. Lee, N.C. Wormald, Constrained Path Optimisation for Underground Mine Layout, {The 2007 International Conference of Applied and Engineering Mathematics (ICAEM07), London, (2007),856-861}

\bibitem{bui}X.-N. Bui, P. Soueres, J.-D. Boissonnat, and J.-P. Laumond. The Shortest Path Synthesis for Non-holonomic Robots Moving Forwards. Technical Report 2153, INRIA, Nice-Sophia-Antipolis, 1994.

\bibitem{lavalle}H. Chitsaz and S. M. LaValle, ÒTime-optimal paths for a Dubins airplane,Ó in 2007 46th IEEE Conference on Decision and Control. IEEE, 2007, pp. 2379-2384.

\bibitem{chang}A. Chang, M. Brazil, D.A. Thomas, J.H. Rubinstein,  Curvature-constrained directional-cost paths in the plane. Journal of Global Optimization, Volume 53 Issue 4, August (2012), 663-681.

%\bibitem{canny}J. Canny, B. R. Donald, J. Reif, and P. Xavier. On the complexity of kinodynamic planning. In Proc. 29th Annu. IEEE Sympos. Found. Comput. Sci., pages 306Ð316, 1988.

\bibitem{dubinsexplorer}  J. Diaz and J. Ayala, Dubins Explorer: {A software for bounded curvature paths,} \url{http://joseayala.org/dubins_explorer.html}, 2014.

\bibitem {dubins 1} L.E. Dubins,  On Curves of Minimal Length with Constraint on Average Curvature, and with Prescribed Initial and Terminal Positions and Tangents, { American Journal of Mathematics 79 (1957), 139-155.}

\bibitem{dubins 2}  L.E. Dubins, On Plane Curve with Curvature, {Pacific J. Math. Volume 11, Number 2 (1961), 471-481.}

\bibitem{duindan}V. Duindam,J. Xu, R. Alterovitz, S. Sastry and K. Goldberg, 3D Motion Planning Algorithms for Steerable Needles Using Inverse Kinematics, Int J Rob Res. 2009; 57: 535-549.

\bibitem{fortune}S. Fortune and G. Wilfong. Planning constrained motion.
Ann. Math. Artificial Intelligence, 3(1):21-82, 1991. Algorithmic motion planning in robotics.

%\bibitem{gao}C. Gao, Autonomous soaring and surveillance in wind fields with an unmanned, Ph.D. thesis, University of Toronto, 2015.

\bibitem{jacobs} P. Jacobs and J. Canny, Planning smooth paths for mobile
robots. In Nonholonomic Motion Planning, pages 271-342. Kluwer
Academic, Norwell, MA, 1992.

\bibitem{johnson} H. H. Johnson. An application of the maximum principle to the geometry of plane curves. Proceedings of the American Mathematical Society, 44(2):432-435, 1974.

%\bibitem{lavalle}S. M. LaValle, Planning Algorithms, Cambridge University Press, Cambridge, U.K. 2006.

\bibitem{ny}J. L. Ny, E. Feron, and E. Frazzoli,On the Dubins Traveling Salesman Problem, IEEE Transactions on Automatic Control, vol. 57, no. 1, pp. 265-270, 2012.

%\bibitem{lin1}Y. Lin, S. Saripalli, Path planning using 3D Dubins Curve for Unmanned Aerial Vehicles. ICUAS'14. The 2014 International Conference on Unmanned Aircraft Systems, 27-30 May 2014.

%\bibitem{3dita1} H. Marino, M. Bonizzato, R. Bartalucci, P. Salaris, L. Pallottino, Motion Planning for Two 3-Dubins Vehicles with Distance Constraint.

\bibitem{markov}A. A. Markov. Some examples of the solution of a special kind of problem on greatest and least quantities. Soobshch. Karkovsk. Mat. Obshch., 1:250-276, 1887.

\bibitem{owen1} M. Owen, R. W. Beard , T. W. McLain. Implementing Dubins Airplane Paths on Fixed-Wing UAVs. Handbook of Unmanned Aerial Vehicles Ed. P. Valavanis, P. Kimon and G. Vachtsevanos, 2015.

\bibitem{reeds} J. A. Reeds and L. A. Shepp, Optimal paths for a car that goes both
forwards and backwards, Pacific Journal of Mathematics, vol. 145, no. 2, pp. 367-393, 1990.

\bibitem{reif} J. Reif and H. Wang. The complexity of the two dimensional
curvature-constrained shortest-path problem. In WAFR Õ98:
Proceedings of the third workshop on the algorithmic foundations
of robotics on Robotics : the algorithmic perspective, pages 49-57,
Natick, MA, USA, 1998. A. K. Peters, Ltd.

\bibitem{saldanha}N. Saldanha and P. Zulkhe, Components of spaces of curves with constrained curvature on flat surfaces. Pacific J. Math. 281 (2016), No. 1, 185-242.

\bibitem{rus} A. M. Shkel, V. Lumelsky, Classification of the Dubins set. Robot. Auton. Syst. 2001, 34, 179-202.

\bibitem{soures} P. Soueres and J. P. Laumond, Shortest paths synthesis for a car-like robot, IEEE Transactions on Automatic Control 41(5):672-688. 

\bibitem{sus} H. J. Sussmann, Shortest 3-dimensional paths with a prescribed curvature bound, from: Proc. 34th IEEE Conf. on Decision and Control (New Orleans) (1995) 3306-3312

\bibitem{tso1} A. Tsourdos, B. White, and M. Shanmugavel, Wiley: Cooperative Path Planning of 

\bibitem{whitney} Whitney, H.  On regular closed curves in the plane. {Compositio Math. 4 (1937), 276-284}.

\end{thebibliography}
   
\end{document}